\providecommand{\tabularnewline}{\\}
\providecommand{\algorithmname}{Algorithm}
\theoremstyle{remark}
\newtheorem*{notation*}{\protect\notationname}
\theoremstyle{plain}
\newtheorem{thm}{\protect\theoremname}
\theoremstyle{plain}
\newtheorem{prop}[thm]{\protect\propositionname}
\theoremstyle{plain}
\newtheorem{lem}[thm]{\protect\lemmaname}
\theoremstyle{plain}
\newtheorem{cor}[thm]{\protect\corollaryname}
\theoremstyle{remark}
\newtheorem*{rem*}{\protect\remarkname}
\theoremstyle{definition}
\newtheorem{defn}[thm]{\protect\definitionname}
\theoremstyle{plain}
\newtheorem{assumption}[thm]{\protect\assumptionname}
\theoremstyle{remark}
\newtheorem*{note*}{\protect\notename}
\theoremstyle{plain}
\providecommand{\algorithmname}{Algorithm}
\providecommand{\assumptionname}{Assumption}
\providecommand{\corollaryname}{Corollary}
\providecommand{\definitionname}{Definition}
\providecommand{\factname}{Fact}
\providecommand{\lemmaname}{Lemma}
\providecommand{\notationname}{Notation}
\providecommand{\notename}{Note}
\providecommand{\propositionname}{Proposition}
\providecommand{\remarkname}{Remark}
\providecommand{\theoremname}{Theorem}
\begin{document}
\title{The Bayesian Inversion Problem for Thermal Average Sampling of Quantum
Systems}

\author{Ziheng Chen\footnote{stokes615@utexas.edu}}
\affil{Department of Mathematics, The University of Texas at Austin, Austin 78705, USA}
\author{Zhennan Zhou\footnote{zhennan@bicmr.pku.edu.cn}}
\affil{Beijing International Center of Mathematical Research, Peking University, Beijing 100871, P.R. China}

\maketitle
\begin{abstract}
\begin{onehalfspace}
In this article, we propose a novel method for sampling potential
functions based on noisy observation data of a finite number of observables
in quantum canonical ensembles, which leads to the accurate sampling
of a wide class of test observables. The method is based on the Bayesian
inversion framework, which provides a platform for analyzing the posterior
distribution and naturally leads to an efficient numerical sampling algorithm.
We highlight that, the stability estimate is obtained
by treating the potential functions as intermediate variables in the
following way: the discrepancy between two sets of observation data of training
observables can bound the distance between corresponding posterior
distributions of potential functions, while the latter 
naturally leads to a bound of the discrepancies between corresponding thermal
averages of test observables. Besides, the training observables can
be more flexible than finite samples of the local density function,
which are mostly used in previous
researches. The method also applies to the multi-level quantum systems
in the non-adiabatic regime. In addition, we provide extensive numerical
tests to verify the accuracy and efficiency of the proposed algorithm.
\end{onehalfspace}
\end{abstract}

\section{Introduction}
It has long been a challenging problem to establish inversion theories
in quantum canonical ensembles (see \cite{Mehats2010}, \cite{Lemm2000a},
\cite{Lemm2000b}, \cite{J.C.LemmaJ.Uhlig}, \cite{Habeck2014}, \cite{Lemm2005}, \cite{Nguyen2017}). One of the difficulties lies in the fact that
the quantum canonical ensemble is not directly tractable as the classical
case. In this article, the following thermal average of a given observable
$\widehat{A}$ is of our interest:
\[
\langle\widehat{A}\rangle =\frac{1}{\mathcal{Z}}\mathbf{Tr}\left[\exp\left(-\beta\widehat{H}\right)\widehat{A}\right]
\]
where $\widehat{H}=-\frac{1}{2M}\Delta+{V}(q)$ is the quantum
Hamiltonian, $q \in \mathbb R^d$, $M$ is the particle mass, $V:\mathbb{R}^d\to\mathbb{R}$ is the potential function,
$\beta$ is referred to as
the inverse temperature and $\mathcal{Z}\triangleq\mathbf{Tr}\left[\exp\left(-\beta\widehat{H}\right)\right]$
is the partition function. In general, the physical observable $\widehat{A}$  corresponds to a self-adjoint operator, whereas for simplicity, we only consider observables as functions of the position variable in this work.

There have been some previous works regarding the existence of such
a potential function that yields the local density function $n$ exactly
as given. In \cite{Mehats2010}, the authors show that, in a 1-dimensional
quantum system with periodic boundary conditions, any positive density
function corresponds to a unique density operator minimizing the quantum
free energy. Furthermore, the density operator is in the form of $\exp\left(-\left(-\Delta+V\left(x\right)\right)\right)$
where $V$ is the chemical potential. The result is extended in \cite{Mehats2011}
to a multi-dimensional case as well as unbounded domains and non-linear
interactions in Hartree or Hartree-Fock systems. On the classical
counterpart, \cite{Chayes1984} addresses the problem whether there
exists an external potential corresponding to a given equilibrium
single particle density. Results are established for both the canonical
and grand canonical distributions. The Hamiltonian concerned is in
the form of $H\left(x_{1},x_{2},\dots x_{n}\right)=W\left(x_{1},x_{2},\dots x_{n}\right)+\sum_{i=1}^{n}U\left(x_{i}\right)$
where $W$ is known (not required to be symmetric) and $U$ is unknown.
Under a few integrable conditions, $U$ can be uniquely determined
by local density function. A following work \cite{Navrotskaya2014}
extends the conclusion to functions $U$ that may symmetrically contain
more than one particle coordinate $x_{i}$.

Some other works lay emphasis on recovering the potential landscape
numerically based on observations to the system. A first work \cite{Lemm2000a}
models the likelihood of potential functions  based on position
observables with finite given eigenvalues 
and the correspondingly training data $D$, which leads to an expression for the likelihood based on
the wave function. Besides, the inversion process is done by maximizing
a posteriori which involves taking the variational derivative on the
posterior probability and the numerical implementation is based on
the gradient descent algorithm. In \cite{Lemm2000b}, the effect of
consecutive measurement at non-equilibrium is taken into account.
Under the assumption of a time-independent Hamiltonian, the algorithm
proposed in the previous work is adapted and amended by considering
how the derivative of wave function depends on the time intervals between
successive measurements. In \cite{J.C.LemmaJ.Uhlig}, the authors
discussed how to design priors to recover special properties in potential
functions, such as periodicity (as for a distorted crystal surface)
or expected discontinuities. A later work \cite{Lemm2005} rewrites
the formula for the thermal average by using the Feynman path integral.
To effectively (in numerical sense) evaluate the functional derivative
of wave function, the stationary phase approximation is deployed.

The inverse problems in quantum statistics, similarly to those in other 
fields, can often be ill-posed or highly under-determined.
At the same time, there is a wide gap between the works mentioned
above: on one side, exact potential functions can be found in certain idealized scenarios, 
but only on a theoretical level; on the other side, some computational methods
have been designed, but there is no clear conclusion on numerical convergence or stability.
To merge the gaps, we adopt the Bayesian inversion framework to the
quantum thermal average problem.
In this work, a direct application on the inference of the potential functions is achieved by using this framework.
However, due to the special structure of the quantum thermal average, i.e. the presence of
the ring polymer potential, the Bayesian sampling process is much more challenging than its
counterpart for classical systems. Thanks to the recent understanding of the continuum limit
of the ring polymer representation, the Gibbs distribution on the infinite-dimensional
configuration space serves as a crucial component in deriving efficient algorithms and
proving stability results in spite of other technical difficulties. 
Compared to deterministic approaches for
inverse problems, the Bayesian inversion method is more favorable  for infinite-dimensional problems (see e.g. \cite{Stuart2010}, \cite{Dashti2017}). The advantage of
this method is that it does not aim to identify the maximum point
of the likelihood, but rather transforms the optimization problem
into a sampling problem which naturally provides a statistical result
for the inverse problem. There are a few following works applying
this framework, for example, to the determination of the initial condition
for Navier--Stokes equation in \cite{Cotter2010}, to a three-dimensional
global seismic wave propagation inverse problem with hundreds of thousands
of parameters in \cite{Bui-Thanh2013} and to benchmarks in Higher-Order
Ice Sheet Models in \cite{Petra2014}.

The advantages of using this framework are mainly four-fold.
\begin{itemize}
\item Firstly, the Bayesian framework views the solution to the inverse
problem as a posterior distribution that modifies the a priori knowledge
of the unknown by assimilating the noisy observation data. It
is impossible to uniquely pinpoint the potential function based on only
a finite number of observations, whereas the Bayesian inversion
framework admits multiple possible solutions with varying weights
described by a posterior distribution. Based on Bayes's formula,
the inversion from the training observations to the posterior distribution
has a statistical sense. Furthermore, the stability of the
result obtained from the inversion process is also ensured on a theoretical
level.
\item Secondly, the use of such framework also naturally leads to an efficient
numerical algorithm. By building a Markov chain with the posterior
distribution as the desired invariant distribution, the potential
function can be sampled, providing further predictions on test observations.
In the numerical section, we will show that the ground truth of test
observations can be recovered if the noise covariance is assumed to
be relatively small. This result confirms that not only
the posterior distribution is stable with respect to noises, but also the predictions
on test observations are practical and meaningful. Besides, the truncation
error introduced in the ring polymer representation 
can be quantitatively estimated, which makes the ring polymer representation
 a superior asymptotic approximation compared with  the static phase approximation
adopted in \cite{Lemm2005}. 
\item Moreover, the restrictions on training observables in this work can
be largely relaxed, while in the previous works
(see \cite{Lemm2000a} and \cite{Lemm2000b}) the only viable option
is the position observable confined in a small neighborhood of the
most likely positions.
In the following theorem statement, we will
demonstrate that the training and testing observables can be expanded
to bounded continuous functions. 
This greatly helps us to relax the strict requirements applied on practical physical
measurements and ensure a wider application in real-life experiments. 
\item Last but not least, the Bayesian Inversion Framework used in this
work also helps build a seamlessly connection between theory and algorithm
in the infinite-parameter regime. There are also some other works
focusing on problems that share a similar background with this inverse
quantum thermal average problem. For example, the inverse Ising problem
has been a popular topic. In \cite{Habeck2014}, the authors adopt the 
Sequential Monte Carlo algorithm to infer the parameter $\lambda$
in systems with Hamiltonian $E\left(x\right)=\sum_{k=1}^{K}\lambda_{k}f_{k}\left(x\right)$.
In \cite{Nguyen2017}, the authors reviewed ways of recovering parameters
in the Ising-type Hamiltonian $H\left(\boldsymbol{s}\right)=-\sum_{i}h_{i}s_{i}-\sum_{i<j}J_{ij}s_{i}s_{j}$.
Methods mentioned in these works, however, only apply to a finite-parameter
regime. As is emphasized in \cite{Stuart2010}, avoiding
discretization until the last possible moment and valuing the infinite-dimensional
nature of the framework enables us to examine the coherence between
theoretical results under the continuum limit and numerical implementation
based on the ring polymer representation.
\end{itemize}
The outline of this paper is listed as follows. In Sec. \ref{sec:A-Review-on-Forward-Problem},
we review previous works addressing the sampling problem in quantum
canonical ensembles. The inversion process theory is established in
Sec. \ref{sec:Problem-Formulation}, the numerical algorithm is proposed
in Sec. \ref{sec:The-Inversion-Algorithm} and a prior analysis is
performed in Sec. \ref{sec:Prior-Analysis}. After that, we show a
few numerical studies in Sec. \ref{sec:1-level Proof-of-concept}-\ref{sec:Stability-result A-numerical-proof}
and \ref{sec:Numerical-Study-on-2-Level-System} for systems of 1
and 2 levels correspondingly. The numerical tests also help us to verify
the theoretical results proposed in Sec. \ref{sec:Prior-Analysis}
and gain further insights.

\section{Review on Quantum Thermal Average \label{sec:A-Review-on-Forward-Problem}}

First, we summarize the mathematical model and the numerical approach of the forward problem, i.e. the
thermal average of an observable in a quantum system. In the following, we introduce the ring polymer representation of the thermal average, its continuum limit and the path integral molecular dynamics method for numerical simulation.

\subsection{Ring polymer representation}

The ring polymer representation, first proposed in \cite{Feynman1965}(Sec.
10) and widely used in chemical physics (e.g. see Sec. 2.9 in \cite{Kleinert}),
approximates the thermal average of observable $\widehat{A}$ with respect to
the classical Gibbs distribution in the $dN$-dimensional space for
ring polymer $\boldsymbol{q}=\left(q_1,\dots,q_N\right)\in\mathbb{R}^{dN}, q_i\in\mathbb{R}^{d}$ as
\begin{equation}
\langle\widehat{A}\rangle =\frac{1}{\mathcal{Z}_{N}}\int_{\mathbb{R}^{dN}}\left[\frac{1}{N}\sum_{i=1}^{N}A\left(q_{i}\right)\right]e^{-S_{N}\left(\boldsymbol{q}\right)}d\boldsymbol{q}+\mathcal{O}\left(N^{-2}\right),\label{eq:1-level-thermal-average}
\end{equation}
where the action,
depending on the potential function $V:\mathbb{R}^d\to \mathbb{R}$,
is given by 
\begin{equation}
S_{N}\left(\boldsymbol{q}\right)\triangleq\beta_{N}\sum_{i=1}^{N}\left[\frac{M\left|q_{i}-q_{i+1}\right|^{2}}{2\beta_{N}^{2}}+V\left(q_{i}\right)\right]\label{eq:1-level-thermal-average-action}
\end{equation}
and $\mathcal{Z}_{N}\triangleq\int_{\mathbb{R}^{N}}e^{-S_{N}\left(q\right)}dq$
is the normalization constant and $\beta_{N}\triangleq\beta/N$.

Define the shorthand notation
$
\label{eqn:shorthand-finite}
\overline{A}\left(\boldsymbol{q}\right) \triangleq \frac{1}{N}\sum_{i=1}^{N}A\left(q_{i}\right)
$. 
Eqn. \ref{eq:1-level-thermal-average} indicates that the thermal
average $\langle\widehat{A}\rangle $ can be approximated
by the expectation of $\overline{A}\left(\boldsymbol{q}\right)$
with respect to the following Gibbs distribution
\begin{equation}
\pi_{N}^{V}\left(d\boldsymbol{q}\right)\triangleq\frac{1}{\mathcal{Z}_{N}}e^{-S_{N}\left(\boldsymbol{q}\right)}d\boldsymbol{q}\label{eq:1-level-Gibbs-distribution}.
\end{equation}

However, since the dimension of the configuration space is $dN$,
a direct numerical integration based on $\pi_{N}^{V}$ is too expensive
due to large $N$ required by reducing the approximation error. To
avoid the curse of dimensionality, a few numerical sampling methods
are designed to approximate the integration by averaging a time series
based on Eqn. \ref{eq:1-level-Gibbs-distribution}; we will further discuss this topic in Sec. \ref{Under-damped Langevin sampling}.

\subsection{Continuum limit\label{subsec:1-level Continuum-limit}}

As the division number $N$ approaches infinity, there is a formal limit
for the ring polymer configuration $\boldsymbol{q}$ as well for the
action $S_N$. For given bead number $N$, a piece-wise linear path $\mathfrak{q}_{N}$
can be constructed by setting $\mathfrak{q}_{N}\left(j\beta_{N}\right)=q_{j}$
and periodic on $\left[0,\beta\right]$, so the formal limit of $\mathfrak{q}_{N}$
as $N\to\infty$ is also in the space $\mathcal{L}\mathbb{R}^{d}\triangleq\left\{ \mathfrak{q}:\left[0,\beta\right]\to\mathbb{R}^{d},\mathfrak{q}\left(0\right)=\mathfrak{q}\left(\beta\right)\right\} $.
The corresponding limit for the action is given by
\[
S\left(\mathfrak{q}\right)\triangleq\int_{0}^{\beta}\left[\frac{M}{2}\left|\dot{\mathfrak{q}}\right|^{2}+V\left(\mathfrak{q}\left(\tau\right)\right)\right]d\tau,
\]
so the thermal average, taking the limit $N\to\infty$, can be formally
written as 
\begin{equation}
\langle\widehat{A}\rangle =\frac{1}{\mathcal{Z}}\int_{\mathcal{L}\mathbb{R}^{d}}\left[\frac{1}{\beta}\int_{0}^{\beta}A\left(\mathfrak{q}\left(\tau\right)\right)d\tau\right]\exp\left(-S\left(\mathfrak{q}\right)\right)D\left[\mathfrak{q}\right]\label{eq:1-level-thermal-average-continuum}
\end{equation}
where $D\left[\mathfrak{q}\right]$ denotes integration over all paths
$\mathfrak{q}$ with $\mathfrak{q}\left(0\right)=\mathfrak{q}\left(\beta\right)$
and $\mathcal{Z}\triangleq\int_{\mathcal{L}\mathbb{R}^{d}}e^{-S\left(q\right)}D\left[\mathfrak{q}\right]$
is the normalization constant.

Eqn. \ref{eq:1-level-thermal-average-continuum} allows us to define
the following formal Gibbs distribution (also see \cite{Feynman1965} Sec.10 and \cite{Lu2018}) on the configuration space
\begin{equation}
\pi^{V}\left(d\mathfrak{q}\right)\triangleq\frac{1}{\mathcal{Z}}\exp\left(-S\left(\mathfrak{q}\right)\right)D\left[\mathfrak{q}\right]\label{eq:1-level-Gibbs-distribution-continuum}
\end{equation}
so that the thermal average in Eq. \ref{eq:1-level-thermal-average-continuum}
can be rewritten as 
\begin{equation}
\langle\widehat{A}\rangle =\mathbb{E}_{\pi^{V}\left(d\mathfrak{q}\right)}\left[\frac{1}{\beta}\int_{0}^{\beta}A\left(\mathfrak{q}\left(\tau\right)\right)d\tau\right]=\mathbb{E}_{\pi^{V}\left(d\mathfrak{q}\right)}\overline{A}\left[\mathfrak{q}\right]\label{eq:1-level-thermal-average-continuum-expectation}
\end{equation}
where the shorthand notation is defined as
\begin{equation}
\label{eqn:shorthand-continuum}
	\overline{A}\left[\mathfrak{q}\right]\triangleq \frac{1}{\beta}\int_{0}^{\beta}A\left(\mathfrak{q}\left(\tau\right)\right)d\tau.
\end{equation}
\begin{note*}
	To emphasize the dependency of $\langle\widehat{A}\rangle $ on the potential $V$, we use the notation $G^{A}\left(V\right)$ to represent the mapping from $V$ to the thermal average $\langle\widehat{A}\rangle $ where needed. 
	On the corresponding part, the notation $G^A_N\left(V\right)$ is used to represent the mapping to the thermal average under the ring polymer representation, where $N$ is the number of beads on the ring $\boldsymbol{q}$.
\end{note*}

\subsection{Under-damped Langevin sampling\label{Under-damped Langevin sampling}}

To enhance the efficiency of the sampling process,  an auxiliary momentum variable
$\boldsymbol{p}\in\mathbb{R}^{dN}$ with artificial mass $M$ can
be introduced \cite{Liu2016} \cite{Zhang2017}. In the augmented state space of position and momentum
of ring polymer beads, the thermal average is given by

\[
\langle\widehat{A}\rangle =\frac{1}{\mathcal{Z}'_{N}}\int_{\mathbb{R}^{N}}\int_{\mathbb{R}^{N}}\left[\frac{1}{N}\sum_{i=1}^{N}A\left(q_{i}\right)\right]e^{-\beta_{N}H_{N}\left(\boldsymbol{q},\boldsymbol{p}\right)}d\boldsymbol{q}d\boldsymbol{p}+\mathcal{O}\left(N^{-2}\right)
\]
where the Hamiltonian is given by
\begin{equation}
H_{N}\left(\boldsymbol{q},\boldsymbol{p}\right)=\frac{1}{2M}\left|\boldsymbol{p}\right|^{2}+\sum_{i=1}^{N}\left[\frac{M\left|q_{i}-q_{i+1}\right|^{2}}{2\beta_{N}^{2}}+V\left(q_{i}\right)\right].\label{eq:1-level-underdamped-langevin-hamiltonian}
\end{equation}
Therefore the classical Gibbs distribution in the extended phase space
can be sampled by evolving the following dynamic system:
\begin{equation}
\begin{cases}
d\boldsymbol{q} & =\nabla_{\boldsymbol{p}}H_{N}dt\\
d\boldsymbol{p} & =-\nabla_{\boldsymbol{q}}H_{N}dt-\gamma\boldsymbol{p}dt+\sqrt{\frac{2\gamma M}{\beta_{N}}}d\boldsymbol{B}.
\end{cases}\label{eq:1-level-underdamped-langevin-equation}
\end{equation}
To numerically integrate the SODE in Eqn. \ref{eq:1-level-underdamped-langevin-equation},
we use the BAOAB method which is proposed in \cite{Leimkuhler2013},
further tested and compared against other variants (ABOBA and OBABO)
in \cite{Leimkuhler2013a} and \cite{Liu2016}, and investigated elaborately and shown useful for other types of thermostats in \cite{Zhang2017}.

\subsection{Multi-level system}

The thermal average of a given observable in a multi-level system
has been explored in the previous works. In \cite{Menzeleev2014},
kinetically-constrained RPMD is proposed to directly simulate electronically
non-adiabatic chemical processes. In \cite{Ananth2013}, the authors
proposed mapping-variable RPMD which constructs continuous Cartesian
variables for both electronic states and nuclear degrees of freedom;
see also the review paper \cite{Stock2005}.

Recently, a few works focus on the exact computation
of multi-level systems. In \mbox{\cite{Liu2018}}, three splitting
schemes for the Boltzmann operator in the non-adiabatic representation
are discussed in detail and multi-electronic-state PIMD is derived
afterwards. In \mbox{\cite{Tao2018}}, a corresponding isomorphic
Hamiltonian is introduced to fully recover the exact quantum Boltzmann
distribution under the Boltzmann sampling with classical nuclear degrees
of freedom. In this paper, we implement the method proposed by \mbox{\cite{Lu}}
where the non-adiabatic effect is added into consideration by modeling
the surface hopping procedure as a Q-process. A following work \mbox{\cite{Lu2018a}}
improves this method by introducing a multiscale integrator for the
infinite swapping limit. We will supply the essential formula in Sec.
\mbox{\ref{sec:RPR-in-Two-level-Systems}} in the Appendix.

\section{Inversion Process Based on Thermal Averages \label{sec:Problem-Formulation}}

Sec.~\ref{sec:A-Review-on-Forward-Problem} has established the forward
problem, i.e. given observable $\widehat{A}$ and potential $V$,
the thermal average can be formulated as
\[
\boldsymbol{y}_{truth}=G^{A}\left(V\right).
\]

However, an observation $\boldsymbol{y}$ from the real world or experiment may
deviate from the ground truth $G^{A}\left(V\right)$ because of noise,
possibly due to instrumental bias, measurement error, and thermal
fluctuation. A simplest model for the noise $\boldsymbol{\eta}$ is the additive
assumption, i.e.
\begin{equation}
\label{eqn:introducing-noise}
\boldsymbol{y}=G^{A}\left(V\right)+\boldsymbol{\eta},
\end{equation}
and the noise $\boldsymbol{\eta}$ is assumed independent of the potential function $V$. Since $\boldsymbol{\eta}$ is a random variable, a determinate result for the ``best'' $V$ barely makes sense. Nevertheless, it is sensible to presume a posterior distribution $\mu^{\boldsymbol{y}}\left(dV\right)$ of $V$ based on the noisy observation $\boldsymbol{y}$, given a known prior distribution on the space of all admissible potential functions.
As the observable $ \widehat{A} $ is used to recover the posterior distribution, we will name it as the ``training observable''.
On the other hand, we in practice are interested in the inference of other thermal averages within the same ensemble, which can be viewed as a weak version of the inverse problem in quantum statistics. Such concern can be generalized to the thermal average of another observable $ \widehat{O} $, named as the ``testing observable''.

Although the prediction from $ \boldsymbol{y} $ to $ \langle \widehat{O} \rangle $ may look like a classical statistics learning problem, the difficulty lies in the nonlinear nature of density operators with respect to the potential function and the observation data in quantum canonical ensembles.
To address this both theoretically and numerically, we replace the inverse mapping with the posterior distribution $\mu^{\boldsymbol{y}}\left(dV\right)$. Instead of directly computing $ G^{O}\left(V_p\right) $ by the point estimator $V_p$ (for example MAP), we treat $V$ as an intermediate random variable and thus shift our attention to the weighted average $\mathbb{E}_{\mu^{\boldsymbol{y}}\left(dV\right)}G^{O}\left(V\right)$, where the weights are obtained by sampling the posterior distribution $\mu^{\boldsymbol{y}}\left(dV\right)$ induced by the noisy training observation.
In fact, the weighted-average $\mathbb{E}_{\mu^{\boldsymbol{y}}\left(dV\right)}G^{O}\left(V\right)$ sampled by the algorithm can be written into $\mathbb{E}_{\pi^{\boldsymbol{y}}\left(d\mathfrak{q}\right)}\overline{O}\left[\mathfrak{q}\right]$, thus it suffices to study the property of the posterior distribution $\pi^{\boldsymbol{y}}\left(d\mathfrak{q}\right)$ of the conditional variable $\mathfrak{q}|\boldsymbol{y}$.

The methodology in this work contains a direct application on the Bayesian inversion framework \cite{Dashti2017} as well as some special considerations with regard to the structure of quantum thermal averages. The inversion procedure from observations $\boldsymbol{y}$ to potential functions $V$ is just a reformulation of how the posterior distribution is derived in such framework. Nevertheless, the posterior distribution on $V$ helps us to establish the posterior distribution on configurations $\mathfrak{q}$ and the weight average $\mathbb{E}_{\pi^{\boldsymbol{y}}\left(d\mathfrak{q}\right)}\overline{O}\left[\mathfrak{q}\right]$ of test observables $O$. It also helps in the following two aspects. The conclusion on stability is established by combining the theorem on stability in this framework and the fact that the Gibbs distribution continuously depends on potential $V$. The consistency of the numerical algorithm results from the equality of two thermal averages that are obtained from different perspectives.

In the following sections, we discuss the following properties
of the posterior distribution:
\begin{description}
\item [{Existence}] We formulate the inverse problem in this section.
The posterior distribution $\mu^{\boldsymbol{y}}\left(dV\right)$ can be obtained from the prior distribution $\mu_{0}\left(dV\right)$ by evaluating the negative log potential $\Phi\left(V;\boldsymbol{y}\right)$ with the training observation $\boldsymbol{y}$.
To further describe the conditional variable $\mathfrak{q}|\boldsymbol{y}$, we can view $V$ as an intermediate variable to obtain the posterior distribution $\pi^{\boldsymbol{y}}\left(d\mathfrak{q}\right)$ based on the two conditional variables $V|\boldsymbol{y}$ and $\mathfrak{q}|V$. By the end $\mathbb{E}_{\mu^{\boldsymbol{y}}\left(dV\right)}G^{O}\left(V\right)=\mathbb{E}_{\pi^{\boldsymbol{y}}\left(d\mathfrak{q}\right)}\overline{O}\left[\mathfrak{q}\right]$ is proved to confirm the consistency between our model and algorithm.
\item [{Solvability}] We demonstrate how to numerically sample the
posterior distribution $\mu^{\boldsymbol{y}} \left(dV\right) $ and discuss some details in implementation
in Sec. \ref{sec:The-Inversion-Algorithm}. The algorithm is an iterative
procedure in a proposal-decision approach: in each iteration, a new
potential proposal $\widehat{V}^{\left(k+1\right)}$ is drawn based
on the previous sample $V^{\left(k\right)}$ and the acceptance probability
is based on the comparison between current and previous observation
errors. To be more specific, the algorithm can be abstracted as:
\begin{description}
\item [{Initialization}] Obtain the ground truth observation $\boldsymbol{y}^{*}$. Draw $\widehat{V}^{\left(0\right)}=V^{\left(0\right)}\sim\mu_{0}\left(dV\right)$.
\item [{Proposal}] Draw $\widehat{V}^{\left(k+1\right)}$ based on $V^{\left(k\right)}$.
Compute training observation $\widehat{\boldsymbol{y}}^{\left(k+1\right)}=G^{A}\left(\widehat{V}^{\left(k+1\right)}\right)$.
\item [{Decision}] With probability $a\left(\widehat{\boldsymbol{y}}^{\left(k+1\right)},\boldsymbol{y}^{\left(k\right)};\boldsymbol{y}^{*}\right)$ which is consistent with the proposal scheme,
accept, otherwise reject the proposal.
\begin{description}
\item [{Accept}] $V^{\left(k+1\right)}=\widehat{V}^{\left(k+1\right)}$
and $\boldsymbol{y}^{\left(k+1\right)}=\widehat{\boldsymbol{y}}^{\left(k+1\right)}$.
\item [{Reject}] $V^{\left(k+1\right)}=V^{\left(k\right)}$ and $\boldsymbol{y}^{\left(k+1\right)}=\boldsymbol{y}^{\left(k\right)}$.
\end{description}
\end{description}
\item [{Stability}] We give some stability analysis in Sec. \ref{sec:Prior-Analysis}. We construct a series of proofs to show that the posterior distributions are stable under the disturbance of the noisy observation data.
\end{description}

The formulation of the inverse problem and the stability result will be discussed under the continuum limit, while the numerical scheme are based on the ring polymer representation in finite dimensions. Also, we assume the technical condition that the posterior distribution mentioned in the paper is absolutely continuous to the corresponding prior distribution (e.g. $\mathfrak{q}|V$ to $\mathfrak{q}$, $V|\boldsymbol{y}$ to $V$).

\subsection{Notations}

Without loss of generality, we assume that the physical space of the quantum particle is one-dimensional, i.e. $d=1$.

The space of $\beta$-periodic loops $\mathfrak{q}$ (in the continuum
limit sense) is defined as
\[
X\triangleq\mathcal{L}\mathbb{R}\triangleq\left\{ \mathfrak{q}:\left[0,\beta\right]\to\mathbb{R},\mathfrak{q}\left(0\right)=\mathfrak{q}\left(\beta\right)\right\} ,
\]
while the space of $N$-bead ring polymer $\boldsymbol{q}$ is defined as
\[
	X_N\triangleq\mathbb{R}^N.
\]

\smallskip

The training observations $\boldsymbol{y}$ based on $N_{T}$ observables
are in the space
\[
Y\triangleq\mathbb{R}^{N_{T}}.
\]

\smallskip

The potential function $V$ is identified by its difference to the harmonic potential $V_o \triangleq \frac12 x^2$, thus we define the space containing all admissible potential functions as
\[
W^{\boldsymbol{1}}\triangleq\left\{ V:\left(V-\frac{1}{2}x^{2}\right)\in L^{\infty}\left(\mathbb{R}\right)\cap L^{2}\left(\mathbb{R}\right)\right\} .
\]

Since the Hermite basis function $\lbrace \phi_i\rbrace$ (where $\phi_i\in L^2\left(\mathbb{R}\right)$, see Section \ref{sec:Analysis-Supplyments}
in Appendix for definition) is a complete orthogonal basis for $L^{2}\left(\mathbb{R}\right)$,
any given element $V\in W^{\boldsymbol{1}}$ can be written as the linear combination of the basis as
\begin{equation}
V=V_{o}+\sum_{i=0}^{\infty}v_{i}\phi_{i}\label{eq:expand-V-onto-hermite-functions}
\end{equation}
where $v_{i}=\left\langle V-V_{o},\phi_{i}\right\rangle_{L^2} $ is the
coefficient of the $i$-th basis. By Eqn. \ref{eq:expand-V-onto-hermite-functions},
we have a mapping from the space of weight sequences $\left\{ v_{i}\right\} $
to $W^{\boldsymbol{1}}$.

To make $W^{\boldsymbol{1}}$ a Banach space, we endow it with norm
\[
\left|\left|V\right|\right|_{W^{\boldsymbol{1}}}\triangleq\left|\left|V-V_{o}\right|\right|_{L^2}+\left|\left|V-V_{o}\right|\right|_{L^{\infty}}.
\]
Then $W^{\boldsymbol{1}}$ is complete (see Lem. \ref{lem:W1 complete} for proof).

In the numerical implementation, we use the truncated version of $W^{\boldsymbol{1}}$,
defined as 
\[
W_{L}^{\boldsymbol{1}}\triangleq\left\{ V:\left(V-V_{o}\right)\in\boldsymbol{span}\left(\phi_{0},\dots\phi_{L}\right)\right\} .
\]
Therefore, there exists a projection $\Pi_{L}^{\boldsymbol{1}}$ of
truncation from $W^{\boldsymbol{1}}$ to $W_{L}^{\boldsymbol{1}}$,
namely
\[
\Pi_{L}^{\boldsymbol{1}}\left(V\right)=V_{o}+\sum_{i=0}^{L}\left\langle V-V_{o},\phi_{i}\right\rangle \phi_{i}.
\]

\bigskip

Now $X,Y,W^{\boldsymbol{1}}$ are Banach spaces endowed with proper
norms. To further discuss the relation between variables on these
spaces, we need to define a few measures as the prior and posterior
distributions.

According to Eqn. \ref{eq:1-level-Gibbs-distribution-continuum}, a given potential
function $V:\mathbb{R}\to \mathbb{R}$ (recall the dimension $d$ of the physical space is assumed to be 1) induces a formal Gibbs distribution
$\pi^{V}\left(d\mathfrak{q}\right)$ on the position configuration
space $X=\mathcal{L}\mathbb{R}$.
Especially, we denote $\pi^{V_{o}}$
as $\pi_{0}$. The connection between $\pi_{0}$ and $\pi^{V}$ is
can be derived from the action $S\left(\mathfrak{q}\right)=\int_{0}^{\beta}\left[\frac{M}{2}\left|\dot{\mathfrak{q}}\right|^{2}+V\left(\mathfrak{q}\left(\tau\right)\right)\right]d\tau$,
i.e.
\begin{equation}
\pi^{V}\left(d\mathfrak{q}\right)=\frac{1}{\mathcal{Z}^V}\pi_{0}\left(d\mathfrak{q}\right)\exp\left(-\int_{0}^{\beta}\mathring{V}\circ\mathfrak{q}\,d\tau\right)\label{eq:definition =00005Cpi^v}
\end{equation}
where we use the shorthand notation $\mathring{V}\triangleq V-V_{o}=V-\frac{1}{2}x^{2}$ and $\mathcal{Z}^V\triangleq\int_X \pi_{0}\left(d\mathfrak{q}\right)\exp\left(-\int_{0}^{\beta}\mathring{V}\circ\mathfrak{q}\,d\tau\right)$ is the normalization constant.

We regard the measure $\pi_{0}$ induced by the harmonic oscillation
potential $V_{o}$ as the prior on $X$, while $\pi^{V}$ is the posterior
distribution which is absolutely continuous to $\pi_{0}$. The Radon-Nikodym (R-N for short)
derivative $\frac{1}{\mathcal{Z}^V}\exp\left(-\int_{0}^{\beta}\mathring{V}\circ\mathfrak{q}\,d\tau\right)$
can be viewed as a `correction' to the prior, adding more weight where
the correction potential $\mathring{V}$ is low.

\smallskip

To analyze the stability in the numerical algorithm, we can define the Gibbs distribution in the ring polymer representation in a similar way. By Eqn. 
\ref{eq:1-level-Gibbs-distribution}, a Gibbs distribution $\pi^V_N\left(\boldsymbol{q}\right)$ on $X_N=\mathbb{R}^N$ is defined via the action $S_N\left(\boldsymbol{q}\right)$ given the potential function $V$. Especially, we denote $\pi^{V_o}_N$ as $\pi_{0,N}$. The R-N derivative of $\pi^V_N$ with respect to $\pi_{0,N}$ is
\[
\dfrac{\text{d}\pi^V_N}{\text{d}\pi_{0,N}}\left(\boldsymbol{q}\right)=\frac{1}{\mathcal{Z}_N^V} \exp \left(-\beta_N \sum_{i=1}^N \mathring{V}\left(q_i\right)\right) \label{eq:definition rpr rn-d pi^v}
\]
where $\mathcal{Z}_N^V$ is the normalization constant.

\medskip

To give a prior distribution on the potential function space $W^{\boldsymbol{1}}$,
we construct each potential function $V$ from its component $v_{i}$
in the way that
\[
v_{i}=\gamma_{i}\xi_{i},\ \xi_{i}\overset{i.i.d.}{\sim}\mathcal{N}\left(0,1\right)
\]
where $\left\{ \gamma_{i}\right\} $ is a fixed decaying sequence.
This form naturally leads to a Gaussian measure $\mu_{0}\left(dV\right)$
on $W^{\boldsymbol{1}}$, which is defined by the mean value $\mathring{V_{o}}=0$
and the covariance operator 
\[
\Gamma_{V}\triangleq\sum_{n=0}^{\infty}\gamma_i^2 \phi_i\otimes\phi_i.
\]

The correspondence in ring polymer representation is the truncated covariance operator
\[
\Gamma_{V, L}\triangleq\sum_{n=0}^{L}\gamma_i^2 \phi_i\otimes\phi_i. \label{eq:definition truncated covariance}
\]

\medskip

Now we turn to the space of training observations $Y$. Assuming that
the noise $\boldsymbol{\eta}$ is a Gaussian noise with mean $\boldsymbol{0}$
and positive-definite covariance matrix $\Gamma_{\boldsymbol{\eta}}\in\mathbb{R}^{N_{T}\times N_{T}}$,
the distribution of $\boldsymbol{\eta}$ is clearly $\mathcal{N}\left(\boldsymbol{0},\Gamma_{\boldsymbol{\eta}}\right)$.
By the additive noise assumption, the training observation $\boldsymbol{y}$
is a translation of the ground truth $G^{A}\left(V\right)$, so the
distribution of $\boldsymbol{y}$ is $\mathcal{N}\left(G^{A}\left(V\right),\Gamma_{\boldsymbol{\eta}}\right)$
if the potential function $V$ is fixed.
To sum up, we denote $\tau_{0}\triangleq\mathcal{N}\left(\boldsymbol{0},\Gamma_{\boldsymbol{\eta}}\right)$
as the noise distribution and $\tau^{V}\triangleq\mathcal{N}\left(G^{A}\left(V\right),\Gamma_{\boldsymbol{\eta}}\right)$
as the shifted distribution dependent on $Y$.

\subsection{Formulation: the inversion process}
Recall that the goal is to sample the potential function $ V $ as well as the testing observable. As we have stated in the beginning of this section, it is equivalent to obtain the weighted average either by sampling $G^O\left(V\right)$ from $\mu^{\boldsymbol{y}}\left(dV\right)$ (i.e. the algorithm's perspective) or by sampling $\overline{O}\left[\mathfrak{q}\right]$ from $\pi^{\boldsymbol{y}}\left(d\mathfrak{q}\right)$ (i.e. viewing $V$ as an intermediate variable). To describe the posterior distribution $\pi^{\boldsymbol{y}}\left(d\mathfrak{q}\right)$, it is essential to start from the two conditional variables $ V|\boldsymbol{y} $ and $ \mathfrak{q}|\boldsymbol{y} $.
In the following reasoning, we establish the posterior distribution $ \mu^{\boldsymbol{y}}\left(dV\right) $ of $ V|\boldsymbol{y} $ in Eqn. \ref{eq:1-level definition of =00005Cmu^y} and show that the posterior measure for $\mathfrak{q}|\boldsymbol{y}$
is
\[
p(\mathfrak{q}|\boldsymbol{y})d\mathfrak{q}=\int_{W^{\boldsymbol{1}}}\left[\mu^{\boldsymbol{y}}\left(dV\right)\thinspace\pi^{V}\left(d\mathfrak{q}\right)\right].
\]

Recall that $V:\mathbb{R}\to\mathbb{R}$ is the potential function (which is in $W^{\boldsymbol{1}}$), $\boldsymbol{y}\in \mathbb{R}^{N_T}$ is the training observation and $\mathfrak{q}\in \mathcal{L}\mathbb{R}$ is the configuration. Thus, $\mathfrak{q}|V$ corresponds to the configuration variable conditioned on the given potential function of the system while $\boldsymbol{y}|V$ corresponds to the conditioned thermal average of training observables.

As we have assumed, the training observation $\boldsymbol{y}$ is
a noisy version of the exact value $G^{A}\left(V\right)$. Recall that, the noise
$\boldsymbol{\eta}$ is independent of $V$ and the distribution of $\boldsymbol{\eta}$ is $\tau_{0}=\mathcal{N}\left(\boldsymbol{0},\Gamma_{\boldsymbol{\eta}}\right)$,
so according to Eqn. \ref{eqn:introducing-noise}, the conditional distribution of $\boldsymbol{y}|V$ is
\begin{equation}
\label{eqn:distribution-for-y}
\tau^{V}=\mathcal{N}\left(G^{A}\left(V\right),\Gamma_{\boldsymbol{\eta}}\right), 
\end{equation}
merely a translation of $\tau_{0}$.

Consider the joint probability density for $\left(\mathfrak{q},\boldsymbol{y},V\right)$:
\begin{align*}
p\left(\mathfrak{q},\boldsymbol{y},V\right) & =p\left(\mathfrak{q},\boldsymbol{y}|V\right)p\left(V\right)\\
 & =p\left(\mathfrak{q}|V\right)p\left(\boldsymbol{y}|V\right)p\left(V\right)
\end{align*}
the second equation holds since 
the noise $\eta$ is assumed independent from the configuration $\mathfrak{q}$,
thus the two conditional variable $\mathfrak{q}|V$
and $\boldsymbol{y}|V$ are independent. So by using the Bayes's
formula at the following starred equation, we have
\begin{align}
p\left(\mathfrak{q},V|\boldsymbol{y}\right) & =\frac{p\left(\mathfrak{q},\boldsymbol{y},V\right)}{p\left(\boldsymbol{y}\right)}\nonumber \\
 & =p\left(\mathfrak{q}|V\right)\frac{p\left(\boldsymbol{y}|V\right)p\left(V\right)}{p\left(\boldsymbol{y}\right)}\nonumber \\
 & \overset{*}{=}p\left(\mathfrak{q}|V\right)p\left(V|\boldsymbol{y}\right).\label{eq:formulation p(q,V|y)=00003Dp(q|V)p(V|y)}
\end{align}
By taking integrals on both sides of Eqn. \ref{eq:formulation p(q,V|y)=00003Dp(q|V)p(V|y)}
on $W^{\boldsymbol{1}}$, we have
\begin{align}
p\left(\mathfrak{q}|\boldsymbol{y}\right) & =\int_{W^{\boldsymbol{1}}}p\left(\mathfrak{q},V|\boldsymbol{y}\right)dV\label{eq:formulation p(q|y)=00003Dint p(q|V) p(V|y)}\\
 & =\int_{W^{\boldsymbol{1}}}p\left(\mathfrak{q}|V\right)p\left(V|\boldsymbol{y}\right)dV.\label{eq:formulation p(q|y)=00004Dint p(q|V) p(V|y)}
\end{align}

By Eqn. \ref{eq:definition =00005Cpi^v} the potential function $V$, as a random variable in the measurable space $W^{\boldsymbol{1}}$, induces a probability measure $\pi^V$ on the configuration space $X$ for the conditional random variable $\mathfrak{q}|V$.
For $V|\boldsymbol{y}$,
since the R-N derivative for $\tau^{V}$ with respect to $\tau_{0}$ is 
\[
\dfrac{d\tau^{V}}{d\tau_{0}}\left(\boldsymbol{y}\right)=\frac{\exp\left(-\frac{1}{2}\left|\left|\Gamma_{\boldsymbol{\eta}}^{-\frac{1}{2}}\left(\boldsymbol{y}-G^{A}\left(V\right)\right)\right|\right|_2^{2}\right)}{\exp\left(-\frac{1}{2}\left|\left|\Gamma_{\boldsymbol{\eta}}^{-\frac{1}{2}}\boldsymbol{y}\right|\right|_2^{2}\right)},
\]
thus the negative log likelihood can be defined as
\[
\Phi\left(V;\boldsymbol{y}\right)\triangleq\frac{1}{2}\left|\left|\Gamma_{\boldsymbol{\eta}}^{-\frac{1}{2}}\left(\boldsymbol{y}-G^{A}\left(V\right)\right)\right|\right|_2^{2}-\frac{1}{2}\left|\left|\Gamma_{\boldsymbol{\eta}}^{-\frac{1}{2}}\boldsymbol{y}\right|\right|_2^{2}=-\log\left(\dfrac{d\tau^{V}}{d\tau_{0}}\left(\boldsymbol{y}\right)\right).
\]

To utilize Thm. \ref{thm:bayesian-inversion-conditional-variable}, we develop the following argument.
Denote $\nu_0$ to be the product measure $\nu_0\left(\text{d}V,\text{d}\boldsymbol{y}\right)\triangleq\mu_0\left(\text{d}V\right)\tau_{0}\left(\text{d}\boldsymbol{y}\right)$ and assume the negative log likelihood $\Phi$ is $\nu_0$-measurable.
Since the conditional variable $\boldsymbol{y}|V$ is distributed according to $\tau^V\left(\text{d}\boldsymbol{y}\right)$, the random variable pair $\left(V, y\right)$ is distributed according to $\nu\left(\text{d}V,\text{d}\boldsymbol{y}\right)\triangleq\mu_0\left(\text{d}V\right)\tau^V\left(\text{d}\boldsymbol{y}\right)$.
Furthermore, we have $\nu\ll\nu_0$ with
\[
\dfrac{\text{d}\nu}{\text{d}\nu_0}\left(V,\boldsymbol{y}\right)=\exp\left(-\Phi\left(V;\boldsymbol{y}\right)\right).
\]
Since $\nu_0$ is a product measure, the conditional distribution for $V|\boldsymbol{y}$ under $\nu_0$ naturally exists and equals $\mu_0$.
Thus, by Thm. \ref{thm:bayesian-inversion-conditional-variable}, the conditional
distribution of $V|\boldsymbol{y}$ under $\nu$ is

\begin{equation}
\mu^{\boldsymbol{y}}\left(dV\right)\triangleq\mu_{0}\left(dV\right)\frac{1}{Z\left(\boldsymbol{y}\right)}\exp\left(-\Phi\left(V;\boldsymbol{y}\right)\right)\label{eq:1-level definition of =00005Cmu^y}
\end{equation}
where $Z\left(\boldsymbol{y}\right)\triangleq\int_{W^{\boldsymbol{1}}}\exp\left(-\Phi\left(V;\boldsymbol{y}\right)\right)\mu_{0}\left(dV\right)$
is the partition function. 
$\mu^{\boldsymbol{y}}$ is also called the posterior distribution for $V|\boldsymbol{y}$.

To sum up, we denote $\pi^{\boldsymbol{y}}$ as the posterior distribution
of $\mathfrak{q}|\boldsymbol{y}$ obtained from \ref{eq:formulation p(q|y)=00004Dint p(q|V) p(V|y)} and \ref{eq:1-level definition of =00005Cmu^y}, i.e.

\begin{equation}
\pi^{\boldsymbol{y}}\left(d\mathfrak{q}\right)=\int_{W^{\boldsymbol{1}}}\left[\mu^{\boldsymbol{y}}\left(dV\right)\pi^{V}\left(d\mathfrak{q}\right)\right].\label{eq:definition of pi^y}
\end{equation}

As we have assumed, the posterior distributions $ \mu^{\boldsymbol{y}} $ and $ \pi^V $ are absolutely continuous with respect to the prior distributions $ \mu_0 $ and $ \pi_0 $ correspondingly, so the absolute continuity of $ \pi^{\boldsymbol{y}} $ with respect to $ \pi_0 $ holds from Eqn. \ref{eq:definition of pi^y}. Moreover, we give the R-N derivative as follows without proof.
\begin{lem}
The R-N derivative of the posterior distribution $\pi^{\boldsymbol{y}}$ w.r.t. the prior distribution $\pi_{0}$ is
\begin{equation}
\dfrac{d\pi^{\boldsymbol{y}}}{d\pi_{0}}\left(\mathfrak{q}\right)=\int_{W^{\boldsymbol{1}}}\left[\dfrac{d\mu^{\boldsymbol{y}}}{d\mu_{0}}\left(V\right)\dfrac{d\pi^{V}}{d\pi_{0}}\left(\mathfrak{q}\right)\mu_{0}\left(dV\right)\right].\label{eq:formulation d pi^y/d =00005Cpi_0}
\end{equation}
\end{lem}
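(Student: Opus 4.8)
The plan is to verify the identity by testing both sides against an arbitrary bounded measurable function $f$ on $X$ and checking that the resulting integrals against $\pi_0$ agree; since the R-N derivative is characterized $\pi_0$-a.e. by this property, the formula then follows. First I would unfold the definition of $\pi^{\boldsymbol{y}}$ from Eqn. \ref{eq:definition of pi^y} as the mixture over $V$, so that
\[
\int_X f\left(\mathfrak{q}\right)\,\pi^{\boldsymbol{y}}\left(d\mathfrak{q}\right)=\int_X f\left(\mathfrak{q}\right)\int_{W^{\boldsymbol{1}}}\pi^{V}\left(d\mathfrak{q}\right)\mu^{\boldsymbol{y}}\left(dV\right).
\]

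Next I would swap the order of integration (by Tonelli for $f\ge 0$, then extend by linearity to general bounded $f$) and, inside, rewrite each inner integral using the two R-N derivatives already available: $\pi^{V}\left(d\mathfrak{q}\right)=\frac{d\pi^{V}}{d\pi_{0}}\left(\mathfrak{q}\right)\pi_{0}\left(d\mathfrak{q}\right)$ from Eqn. \ref{eq:definition =00005Cpi^v} and $\mu^{\boldsymbol{y}}\left(dV\right)=\frac{d\mu^{\boldsymbol{y}}}{d\mu_{0}}\left(V\right)\mu_{0}\left(dV\right)$ from Eqn. \ref{eq:1-level definition of =00005Cmu^y}. This produces a double integral over $X$ against $\pi_{0}$ and over $W^{\boldsymbol{1}}$ against $\mu_{0}$, with integrand $f\left(\mathfrak{q}\right)\frac{d\pi^{V}}{d\pi_{0}}\left(\mathfrak{q}\right)\frac{d\mu^{\boldsymbol{y}}}{d\mu_{0}}\left(V\right)$. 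A final application of Fubini collects the $W^{\boldsymbol{1}}$-integration inside, yielding
\[
\int_X f\left(\mathfrak{q}\right)\left[\int_{W^{\boldsymbol{1}}}\frac{d\mu^{\boldsymbol{y}}}{d\mu_{0}}\left(V\right)\frac{d\pi^{V}}{d\pi_{0}}\left(\mathfrak{q}\right)\mu_{0}\left(dV\right)\right]\pi_{0}\left(d\mathfrak{q}\right),
\]
and since $f$ is arbitrary, the bracketed quantity is the desired R-N derivative $\pi_{0}$-a.e.

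The main obstacle will be justifying the interchanges of integration rigorously. To invoke Tonelli I need the joint measurability of the map $\left(V,\mathfrak{q}\right)\mapsto\frac{d\pi^{V}}{d\pi_{0}}\left(\mathfrak{q}\right)$ on the product space $W^{\boldsymbol{1}}\times X$; this is not automatic from the fibrewise R-N derivatives and would have to be established from the explicit form $\frac{1}{\mathcal{Z}^{V}}\exp\left(-\int_{0}^{\beta}\mathring{V}\circ\mathfrak{q}\,d\tau\right)$, verifying that both the exponential functional and the normalizing constant $\mathcal{Z}^{V}$ depend measurably on $V$. Once joint measurability and the standing absolute-continuity assumptions are in place, non-negativity of all three R-N factors makes Tonelli applicable without integrability concerns, and since $\pi^{\boldsymbol{y}}$ is a probability measure the integrals remain finite for bounded $f$, so the passage from non-negative to general bounded $f$ is routine.
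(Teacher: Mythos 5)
Your proof is correct, but note that the paper states this lemma explicitly \emph{without} proof (``we give the R-N derivative as follows without proof''), so there is no in-paper argument to compare against; your write-up fills that gap with exactly the computation the authors implicitly rely on. The duality argument --- testing both measures against bounded measurable $f$, unfolding the mixture definition of $\pi^{\boldsymbol{y}}$ in Eqn.~\ref{eq:definition of pi^y}, inserting the fibrewise densities $\frac{d\pi^{V}}{d\pi_{0}}$ and $\frac{d\mu^{\boldsymbol{y}}}{d\mu_{0}}$, and interchanging integrals by Tonelli --- is the same interchange-of-integrals technique the paper does spell out for Prop.~\ref{prop:E mu-y G-O(V) = E pi-y O(q)}, just run in the reverse direction (identifying a density rather than equating two expectations). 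Two remarks. First, your argument proves slightly more than the paper assumes: the identity $\int f\,d\pi^{\boldsymbol{y}}=\int fg\,d\pi_{0}$ for all bounded measurable $f$, with $g\ge 0$ the bracketed integrand, already yields $\pi^{\boldsymbol{y}}\ll\pi_{0}$ (take $f=\mathbf{1}_{E}$ with $\pi_{0}\left(E\right)=0$), whereas the paper derives absolute continuity from its standing assumptions stated in Sec.~\ref{sec:Problem-Formulation}. Second, you are right to flag joint measurability of $\left(V,\mathfrak{q}\right)\mapsto\frac{d\pi^{V}}{d\pi_{0}}\left(\mathfrak{q}\right)$ as the only nontrivial point --- it is in fact needed even for the mixture in Eqn.~\ref{eq:definition of pi^y} to be well defined --- and your suggested route through the explicit density works: the bound $\left|\int_{0}^{\beta}\left(\mathring{V}_{1}-\mathring{V}_{2}\right)\circ\mathfrak{q}\,d\tau\right|\le\beta\left|\left|V_{1}-V_{2}\right|\right|_{W^{\boldsymbol{1}}}$ (already used in Lem.~\ref{lem:1-level d(pi^V)<d(V)}) gives continuity in $V$ uniformly in $\mathfrak{q}$, which combined with measurability in $\mathfrak{q}$ makes the exponent a Carath\'eodory function on the separable support of $\mu_{0}$, hence jointly measurable; measurability of $V\mapsto\mathcal{Z}^{V}$ then follows by one more application of Tonelli.
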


With these posterior distribution established, now we can examine the coherence between our model (sampling $\overline{O}\left[\mathfrak{q}\right]$ from $\pi^{\boldsymbol{y}}\left(d\mathfrak{q}\right)$) and algorithm (sampling $G^O\left(V\right)$ from $\mu^{\boldsymbol{y}}\left(dV\right)$) by the following proposition.

\begin{prop}
\label{prop:E mu-y G-O(V) = E pi-y O(q)}
The following two weighted averages are the same:
\[
	\mathbb{E}_{\mu^{\boldsymbol{y}}\left(dV\right)}G^{O}\left(V\right)=\mathbb{E}_{\pi^{\boldsymbol{y}}\left(d\mathfrak{q}\right)}\overline{O}\left[\mathfrak{q}\right].
\]
\end{prop}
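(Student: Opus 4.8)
The plan is to observe that, once the relevant definitions are unfolded, both sides of the identity are literally the same integral of $\overline{O}[\mathfrak{q}]$ against the joint law of the pair $(V,\mathfrak{q})$ on $W^{\boldsymbol{1}}\times X$ — the law in which $V\sim\mu^{\boldsymbol{y}}$ and, conditionally, $\mathfrak{q}\mid V\sim\pi^{V}$ — written out in the two possible orders of integration. The only substantive step is then an appeal to Fubini's theorem to interchange that order.

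Concretely, I would first rewrite the left-hand side using Eqn.~\ref{eq:1-level-thermal-average-continuum-expectation}, which identifies the thermal average as $G^{O}(V)=\mathbb{E}_{\pi^{V}(d\mathfrak{q})}\overline{O}[\mathfrak{q}]=\int_{X}\overline{O}[\mathfrak{q}]\,\pi^{V}(d\mathfrak{q})$, so that
\[
\mathbb{E}_{\mu^{\boldsymbol{y}}(dV)}G^{O}(V)=\int_{W^{\boldsymbol{1}}}\left(\int_{X}\overline{O}[\mathfrak{q}]\,\pi^{V}(d\mathfrak{q})\right)\mu^{\boldsymbol{y}}(dV).
\]
Then I would unfold the right-hand side through the definition of the mixture measure $\pi^{\boldsymbol{y}}$ in Eqn.~\ref{eq:definition of pi^y}, obtaining
\[
\mathbb{E}_{\pi^{\boldsymbol{y}}(d\mathfrak{q})}\overline{O}[\mathfrak{q}]=\int_{X}\overline{O}[\mathfrak{q}]\,\pi^{\boldsymbol{y}}(d\mathfrak{q})=\int_{X}\overline{O}[\mathfrak{q}]\left(\int_{W^{\boldsymbol{1}}}\mu^{\boldsymbol{y}}(dV)\,\pi^{V}(d\mathfrak{q})\right).
\]
At this stage both quantities are the integral of the common integrand $\overline{O}[\mathfrak{q}]$ against $\mu^{\boldsymbol{y}}(dV)\,\pi^{V}(d\mathfrak{q})$, differing only in which variable is integrated out first, and the interchange of integration order finishes the proof.

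The hard part — though ultimately a mild one — will be justifying the use of Fubini rigorously. This requires two things: that $\mu^{\boldsymbol{y}}(dV)\,\pi^{V}(d\mathfrak{q})$ genuinely defines a measure on the product space, which rests on the measurability of $V\mapsto\pi^{V}(A)$ for measurable $A\subseteq X$ (the same regularity already implicitly invoked to make the mixture $\pi^{\boldsymbol{y}}$ in Eqn.~\ref{eq:definition of pi^y} well defined); and that the integrand is absolutely integrable against it. For the latter I would invoke the standing assumption that the testing observable $O$ is bounded and continuous: then $\lvert\overline{O}[\mathfrak{q}]\rvert\le\frac{1}{\beta}\int_{0}^{\beta}\lvert O(\mathfrak{q}(\tau))\rvert\,d\tau\le\lVert O\rVert_{L^{\infty}}$ uniformly in $\mathfrak{q}$, and since $\mu^{\boldsymbol{y}}$ and each $\pi^{V}$ are probability measures the double integral of $\lvert\overline{O}[\mathfrak{q}]\rvert$ is at most $\lVert O\rVert_{L^{\infty}}<\infty$. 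With this absolute integrability established, Fubini's theorem licenses the interchange and the two weighted averages coincide.
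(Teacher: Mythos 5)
Your proposal is correct and takes essentially the same route as the paper: both unfold $G^{O}(V)=\int_{X}\overline{O}[\mathfrak{q}]\,\pi^{V}(d\mathfrak{q})$ and the mixture definition of $\pi^{\boldsymbol{y}}$ in Eqn.~\ref{eq:definition of pi^y}, then interchange the order of integration. The paper performs that interchange without comment, so your explicit justification of Fubini --- boundedness of $\overline{O}$ by $\lVert O\rVert_{L^{\infty}}$ together with the measurability of $V\mapsto\pi^{V}$ already implicit in the definition of $\pi^{\boldsymbol{y}}$ --- is simply a more careful rendering of the same argument.
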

\begin{proof}
The proof requires no more than a direct calculation: 
by definition of $\mu^{\boldsymbol{y}}\left(dV\right)$ and $G^O\left(V\right)$ we have
\[
	\mathbb{E}_{\mu^{\boldsymbol{y}}\left(dV\right)}G^{O}\left(V\right)
	= \int_{W^{\boldsymbol{1}}} \mu^{\boldsymbol{y}}\left(dV\right) \left[\int_X \pi^{V}\left(d\mathfrak{q}\right) \overline{O}\left[\mathfrak{q}\right]\right].
\]
On the other hand, by interchanging the order of integrals and definition of $\pi^{\boldsymbol{y}}\left(d\mathfrak{q}\right)$, we have
\begin{align*}
	\int_{W^{\boldsymbol{1}}} \mu^{\boldsymbol{y}}\left(dV\right) \left[\int_X \pi^{V}\left(d\mathfrak{q}\right) \overline{O}\left[\mathfrak{q}\right]\right]
	&= \int_X \overline{O}\left[\mathfrak{q}\right] \left[\int_{W^{\boldsymbol{1}}} \mu^{\boldsymbol{y}}\left(dV\right) \pi^{V}\left(d\mathfrak{q}\right)\right] \\
	&= \int_X \overline{O}\left[\mathfrak{q}\right] \pi^{\boldsymbol{y}}\left(d\mathfrak{q}\right) \\
	&= \mathbb{E}_{\pi^{\boldsymbol{y}}\left(d\mathfrak{q}\right)} \overline{O}\left[\mathfrak{q}\right].
\end{align*}
So we arrive at $\mathbb{E}_{\mu^{\boldsymbol{y}}\left(dV\right)}G^{O}\left(V\right)=\mathbb{E}_{\pi^{\boldsymbol{y}}\left(d\mathfrak{q}\right)} \overline{O}\left[\mathfrak{q}\right]$.
\end{proof}

At the end of this section, we write down the corresponding conclusion when $\boldsymbol{q}\in X_N=\mathbb{R}^N$ is a discrete ring under the ring polymer representation and $V\in W^{\boldsymbol{1}}_L$ is in the truncated potential space.
Denote $\mu_{0,L}$ as the marginal measure of $\mu_0$ on the subspace $W^{\boldsymbol{1}}_L$.
The conditional distribution of $V|\boldsymbol{y}$ on the subspace $W^{\boldsymbol{1}}_L$ can be similarly defined as
\[
\mu^{\boldsymbol{y}}_{L,N}\left(dV\right)\triangleq\mu_{0, L}\left(dV\right)\frac{1}{Z_{L,N}\left(\boldsymbol{y}\right)}\exp\left(-\Phi_N\left(V;\boldsymbol{y}\right)\right)\label{eq:1-level definition of mu^y_L}
\]
where $Z_{L,N}\left(\boldsymbol{y}\right)\triangleq \int_{W^{\boldsymbol{1}}_L}\mu_{0, L}\left(dV\right)\exp\left(-\Phi_N\left(V;\boldsymbol{y}\right)\right)$ is the partition function and
\[\Phi_N\left(V;\boldsymbol{y}\right)\triangleq\frac{1}{2}\left|\left|\Gamma_{\boldsymbol{\eta}}^{-\frac{1}{2}}\left(\boldsymbol{y}-G^{A}_N\left(V\right)\right)\right|\right|_2^{2}-\frac{1}{2}\left|\left|\Gamma_{\boldsymbol{\eta}}^{-\frac{1}{2}}\boldsymbol{y}\right|\right|_2^{2}\]
is the negative log likelihood with consideration of the ring polymer approximation.
Since the posterior distribution of $\boldsymbol{q}|\boldsymbol{y}$ depends both on the bead number $N$ and the truncation level $L$, we denote it as $\pi^{\boldsymbol{y}}_{L, N}$.
A similar computation gives
\[
\pi^{\boldsymbol{y}}_{L,N}\left(d\mathfrak{q}\right)=\int_{W^{\boldsymbol{1}}_L}\left[\mu^{\boldsymbol{y}}_{L,N}\left(dV\right)\pi^{V}_N\left(d\mathfrak{q}\right)\right].\label{eq:definition of pi^y_L}
\]
Two similar conclusions are listed below without proof.
\begin{lem}
	The R-N derivative of the posterior distribution $\pi^{\boldsymbol{y}}_N$ w.r.t. the prior distribution $\pi_{0, N}$ is
	\begin{equation}
	\dfrac{d\pi^{\boldsymbol{y}}_{L, N}}{d\pi_{0,N}}\left(\boldsymbol{q}\right)=\int_{W^{\boldsymbol{1}}_L}\left[\dfrac{d\mu^{\boldsymbol{y}}_{L,N}}{d\mu_{0, L}}\left(V\right)\dfrac{d\pi^{V}_N}{d\pi_{0, N}}\left(\mathfrak{q}\right)\mu_{0, L}\left(dV\right)\right].\label{eq:formulation dpi^y_N/dpi_0N}
	\end{equation}
\end{lem}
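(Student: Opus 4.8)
The plan is to mirror the argument behind the continuum-level lemma, exploiting the fact that in the ring polymer setting both $W^{\boldsymbol{1}}_L$ and $X_N = \mathbb{R}^N$ are finite-dimensional, which renders every measurability and integrability issue transparent. I would start from the defining mixture
\[
\pi^{\boldsymbol{y}}_{L,N}\left(d\boldsymbol{q}\right)=\int_{W^{\boldsymbol{1}}_L}\mu^{\boldsymbol{y}}_{L,N}\left(dV\right)\,\pi^{V}_N\left(d\boldsymbol{q}\right),
\]
and substitute the two Radon--Nikodym relations already available: $\mu^{\boldsymbol{y}}_{L,N}\ll\mu_{0,L}$ with density $\tfrac{1}{Z_{L,N}(\boldsymbol{y})}\exp(-\Phi_N(V;\boldsymbol{y}))$, and $\pi^{V}_N\ll\pi_{0,N}$ with density $\tfrac{1}{\mathcal{Z}_N^V}\exp(-\beta_N\sum_i \mathring{V}(q_i))$. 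Writing $\mu^{\boldsymbol{y}}_{L,N}(dV)=\tfrac{d\mu^{\boldsymbol{y}}_{L,N}}{d\mu_{0,L}}(V)\,\mu_{0,L}(dV)$ and $\pi^V_N(d\boldsymbol{q})=\tfrac{d\pi^V_N}{d\pi_{0,N}}(\boldsymbol{q})\,\pi_{0,N}(d\boldsymbol{q})$ recasts the mixture as a double integral against the product measure $\mu_{0,L}\otimes\pi_{0,N}$.

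The central manipulation is to interchange the order of integration so as to factor out $\pi_{0,N}(d\boldsymbol{q})$:
\[
\pi^{\boldsymbol{y}}_{L,N}\left(d\boldsymbol{q}\right)=\left[\int_{W^{\boldsymbol{1}}_L}\dfrac{d\mu^{\boldsymbol{y}}_{L,N}}{d\mu_{0,L}}(V)\,\dfrac{d\pi^V_N}{d\pi_{0,N}}(\boldsymbol{q})\,\mu_{0,L}(dV)\right]\pi_{0,N}(d\boldsymbol{q}),
\]
after which the bracketed quantity is read off as the desired density. To legitimize the swap I would invoke Tonelli's theorem: both R-N derivatives are nonnegative, so the integrand $(V,\boldsymbol{q})\mapsto \tfrac{d\mu^{\boldsymbol{y}}_{L,N}}{d\mu_{0,L}}(V)\,\tfrac{d\pi^V_N}{d\pi_{0,N}}(\boldsymbol{q})$ is a nonnegative product of the two densities and no prior integrability bound is required. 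Joint measurability on $W^{\boldsymbol{1}}_L\times X_N$ follows from the explicit formulas: parametrizing $V\in W^{\boldsymbol{1}}_L$ by its finite coefficient vector $(v_0,\dots,v_L)$, the map $(V,\boldsymbol{q})\mapsto \exp(-\beta_N\sum_i \mathring{V}(q_i))$ is continuous, and $V\mapsto \mathcal{Z}_N^V$, $V\mapsto Z_{L,N}(\boldsymbol{y})$, $V\mapsto\Phi_N(V;\boldsymbol{y})$ are continuous in these coordinates, so the full integrand is jointly Borel measurable.

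It then remains to confirm that the bracketed expression genuinely is the R-N derivative, i.e. that it is $\pi_{0,N}$-measurable and $\pi_{0,N}$-integrable; measurability in $\boldsymbol{q}$ follows from Tonelli applied in the reverse slicing, and integrability is immediate since integrating the bracket against $\pi_{0,N}$ returns $\pi^{\boldsymbol{y}}_{L,N}(X_N)=1$, confirming both $\pi^{\boldsymbol{y}}_{L,N}\ll\pi_{0,N}$ and the stated form of the derivative. I expect the only genuine subtlety --- and hence the step warranting the most care --- to be the joint-measurability verification needed for Tonelli; everything else is bookkeeping inherited from the absolute-continuity assumptions declared for $\mu^{\boldsymbol{y}}_{L,N}$ and $\pi^V_N$. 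Since the argument is structurally identical to that of the continuum lemma, I would present it once and remark that the finite-dimensional truncation only simplifies the measurability checks.
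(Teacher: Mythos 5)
Your proposal is correct, and in fact the paper states this lemma without any proof at all (``listed below without proof''), so your substitute-and-swap argument --- writing the mixture $\pi^{\boldsymbol{y}}_{L,N}\left(d\boldsymbol{q}\right)=\int_{W^{\boldsymbol{1}}_L}\mu^{\boldsymbol{y}}_{L,N}\left(dV\right)\pi^{V}_N\left(d\boldsymbol{q}\right)$ against the product measure $\mu_{0,L}\otimes\pi_{0,N}$ and invoking Tonelli on the nonnegative product of densities --- supplies exactly the computation the paper leaves implicit, and it mirrors the interchange-of-integrals step the authors do carry out explicitly in their proof of Prop.~\ref{prop:E mu-y G-O(V) = E pi-y O(q)}. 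Your observation that finite dimensionality of $W^{\boldsymbol{1}}_L$ and $X_N$ makes the joint-measurability check elementary (via the explicit coefficient parametrization and continuity of $\left(V,\boldsymbol{q}\right)\mapsto\exp\left(-\beta_N\sum_{i}\mathring{V}\left(q_i\right)\right)$) is a genuine, if modest, improvement over the paper, which never addresses this point.
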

\begin{prop}
	\label{prop:E mu^y_L G-O(V) = E pi^y_N O(q)}
	The following two weighted averages are the same:
	\[
	\mathbb{E}_{\mu^{\boldsymbol{y}}_{L,N}\left(dV\right)}G^{O}_N\left(V\right)=\mathbb{E}_{\pi^{\boldsymbol{y}}_{L,N}\left(d\boldsymbol{q}\right)}\overline{O}\left[\boldsymbol{q}\right].
	\]
\end{prop}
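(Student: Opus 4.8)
The plan is to mirror the argument used for Proposition~\ref{prop:E mu-y G-O(V) = E pi-y O(q)}, transporting it from the continuum setting to the finite-dimensional ring polymer setting via the substitutions $X \to X_N = \mathbb{R}^N$, $W^{\boldsymbol{1}} \to W^{\boldsymbol{1}}_L$, $\pi^V \to \pi^V_N$, $\mu^{\boldsymbol{y}} \to \mu^{\boldsymbol{y}}_{L,N}$ and $\pi^{\boldsymbol{y}} \to \pi^{\boldsymbol{y}}_{L,N}$. The computation rests on three ingredients, all available above: first, the identity $G^O_N(V) = \mathbb{E}_{\pi^V_N(d\boldsymbol{q})}\overline{O}[\boldsymbol{q}]$, which is precisely the meaning assigned to the symbol $G^O_N$ in the notational note (the $N$-bead thermal average is the $\pi^V_N$-expectation of $\overline{O}$, cf.\ Eqn.~\ref{eq:1-level-thermal-average}); second, the defining relation $\pi^{\boldsymbol{y}}_{L,N}(d\boldsymbol{q}) = \int_{W^{\boldsymbol{1}}_L}[\mu^{\boldsymbol{y}}_{L,N}(dV)\,\pi^V_N(d\boldsymbol{q})]$ from Eqn.~\ref{eq:definition of pi^y_L}; and third, Fubini's theorem to exchange the order of integration over $W^{\boldsymbol{1}}_L$ and $X_N$.

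First I would expand the left-hand side using the first ingredient, writing
\[
\mathbb{E}_{\mu^{\boldsymbol{y}}_{L,N}(dV)}G^O_N(V) = \int_{W^{\boldsymbol{1}}_L} \mu^{\boldsymbol{y}}_{L,N}(dV)\left[\int_{X_N}\pi^V_N(d\boldsymbol{q})\,\overline{O}[\boldsymbol{q}]\right].
\]
Next I would interchange the two integrals, pulling $\overline{O}[\boldsymbol{q}]$ out of the inner one, to obtain $\int_{X_N}\overline{O}[\boldsymbol{q}]\left[\int_{W^{\boldsymbol{1}}_L}\mu^{\boldsymbol{y}}_{L,N}(dV)\,\pi^V_N(d\boldsymbol{q})\right]$. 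Finally, recognizing the bracketed inner integral as $\pi^{\boldsymbol{y}}_{L,N}(d\boldsymbol{q})$ by Eqn.~\ref{eq:definition of pi^y_L} yields $\int_{X_N}\overline{O}[\boldsymbol{q}]\,\pi^{\boldsymbol{y}}_{L,N}(d\boldsymbol{q}) = \mathbb{E}_{\pi^{\boldsymbol{y}}_{L,N}(d\boldsymbol{q})}\overline{O}[\boldsymbol{q}]$, which is the asserted equality.

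The only step requiring care is the interchange of integration, i.e.\ the applicability of Fubini's theorem. Unlike the continuum version, both domains here carry genuine finite-dimensional measures: $\mu^{\boldsymbol{y}}_{L,N}$ is a probability measure on $W^{\boldsymbol{1}}_L \cong \mathbb{R}^{L+1}$ (it is normalized by the finite partition function $Z_{L,N}(\boldsymbol{y})$), while $\pi^V_N$ is a probability measure on $X_N = \mathbb{R}^N$, so their product $\mu^{\boldsymbol{y}}_{L,N}(dV)\,\pi^V_N(d\boldsymbol{q})$ is a probability measure on the product space. Since the testing observable $O$ is a bounded continuous function, $\overline{O}[\boldsymbol{q}] = \frac{1}{N}\sum_{i=1}^N O(q_i)$ is bounded, hence absolutely integrable against this product measure, and Fubini applies with no additional hypotheses. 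This is in fact the point at which the finite-dimensional formulation is cleaner than its continuum counterpart: here the joint measurability of the integrand follows directly from the continuous dependence of $\pi^V_N$ on $V$, and no delicate path-integral considerations enter. Consequently the substituted argument goes through verbatim, and I expect no genuine obstacle beyond verifying these elementary integrability bookkeeping conditions.
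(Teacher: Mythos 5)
Your proposal is correct and is essentially the argument the paper intends: the paper states this proposition without proof precisely because it follows from the same direct calculation as Prop.~\ref{prop:E mu-y G-O(V) = E pi-y O(q)} (expand $G^O_N$ as a $\pi^V_N$-expectation, interchange integrals, recognize the definition of $\pi^{\boldsymbol{y}}_{L,N}$ from Eqn.~\ref{eq:definition of pi^y_L}). Your additional bookkeeping on Fubini --- boundedness of $\overline{O}$ and the fact that both measures are genuine finite-dimensional probability measures --- is a harmless and accurate elaboration of what the paper leaves implicit.
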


\section{The Inversion Algorithm\label{sec:The-Inversion-Algorithm}}

In this section, we will introduce an inversion algorithm for the
thermal average sampling in the single level quantum system. Recall that our
interest lies in the conditional variable $\mathfrak{q}|\boldsymbol{y}$,
described by 
\[
p\left(\mathfrak{q}|\boldsymbol{y}\right)=\int_{W^{\boldsymbol{1}}}p\left(\mathfrak{q}|V\right)p\left(V|\boldsymbol{y}\right)dV
\]
in which the potential $V$ is viewed as an intermediate variable.
Therefore, by making use of the PIMD method (Sec. \ref{sec:A-Review-on-Forward-Problem})
to sample $\mathfrak{q}|V$ and the posterior distribution sampler
(Sec. \ref{subsec:Measure-preserving-dynamics} in Appendix) to sample
$V|\boldsymbol{y}$, the inversion algorithm can sample the conditional
variable $\mathfrak{q}|\boldsymbol{y}$ as well as the testing observation
$G^{O}\left(V\right)|\boldsymbol{y}$.

\subsection{Algorithm overview}

The algorithm is mainly two-staged: the first stage is to sample a
fixed potential function $\widehat V^{\left(k\right)}$ proposed by the prior
distribution and obtain the $k$-th training thermal average $\boldsymbol{y}^{\left(k\right)}=G^A\left(\widehat V^{(k)}\right)$
based on path integral molecular dynamics; at the second stage, the
average $\boldsymbol{y}^{\left(k\right)}$ is compared against the
ground truth $\boldsymbol{y}^{*}$ and previous sample $\boldsymbol{y}^{\left(k-1\right)}$,
where Metropolis-Hasting method is used to decide whether to keep
the proposed $\widehat V^{\left(k\right)}$ or not. By this way, the conditional
variable $V|\boldsymbol{y}^{*}$ can be sampled accurately up to the error in PIMD simulations.

\begin{algorithm}
\begin{algor}[1]
\item [{{*}}] Initialize the potential function $V^{\left(0\right)}=\widehat{V}^{\left(0\right)}$
(one choice is the harmonic potential $V_{o}$) and the noise covariance
matrix $\Gamma_{\boldsymbol{\eta}}$.
\item [{{*}}] Obtain the ground truth of training observation $\boldsymbol{y}^{*}$
(with or without noise).
\item [{{*}}] Set $k=0$.
\item [{while}] stopping condition is not satisfied (e.g. $k<K_{total}$)
\begin{algor}[1]
\item [{{*}}] Sample a sequence of bead configurations $\boldsymbol{q}_{1},\dots\boldsymbol{q}_{N_{1}}$
distributed according to the measure $\pi^{\widehat{V}^{\left(k\right)}}$
induced by the proposed potential function $\widehat{V}^{\left(k\right)}$.\label{enu:alg1-sample-q}
\item [{{*}}] Average the observations $\overline{A}\left(\boldsymbol{q}_{i}\right)$
to obtain the $k$-th training observation $\widehat{\boldsymbol{y}}^{\left(k\right)}=\frac{1}{N_{1}}\sum_{i=1}^{N_{1}}\overline{A}\left(\boldsymbol{q}_{i}\right)$.
(Note: $\boldsymbol{y}^{\left(0\right)}=\widehat{\boldsymbol{y}}^{\left(0\right)}$)
\item [{{*}}] Compute the negative log likelihood 
\[
\widehat{\Phi}^{\left(k\right)}\triangleq\Phi\left(\widehat{V}^{\left(k\right)};\boldsymbol{y}^{*}\right)=\left\langle \widehat{\boldsymbol{y}}^{\left(k\right)},\Gamma_{\boldsymbol{\eta}}^{-1}\left(\frac{1}{2}\widehat{\boldsymbol{y}}^{\left(k\right)}-\boldsymbol{y}^{*}\right)\right\rangle .
\]
\item [{if}] $k>0$
\begin{algor}[1]
\item [{{*}}] Pick a random number $r^{\left(k\right)}\sim\mathcal{U}\left[0,1\right]$.
\item [{if}] $r^{\left(k\right)}<\exp\left(\min\left[0,\Phi^{\left(k-1\right)}-\widehat{\Phi}^{\left(k\right)}\right]\right)$\label{enu:alg1-mc-method}
\begin{algor}[1]
\item [{{*}}] Accept the proposal: $V^{\left(k\right)}=\widehat{V}^{\left(k\right)}$,
$\boldsymbol{y}^{\left(k\right)}=\widehat{\boldsymbol{y}}^{\left(k\right)}$
and $\Phi^{\left(k\right)}=\widehat{\Phi}^{\left(k\right)}$.
\end{algor}
\item [{else}]~
\begin{algor}[1]
\item [{{*}}] Reject the proposal: $V^{\left(k\right)}=V^{\left(k-1\right)}$,
$\boldsymbol{y}^{\left(k\right)}=\boldsymbol{y}^{\left(k-1\right)}$
and $\Phi^{\left(k\right)}=\Phi^{\left(k-1\right)}$.
\end{algor}
\item [{endif}]~
\end{algor}
\item [{endif}]~
\item [{{*}}] Store the decided potential function $V^{\left(k\right)}$.
\item [{{*}}] Obtain thermal average $G^{O}\left(V^{\left(k\right)}\right)$
of test observables if needed.
\item [{{*}}] Propose $\widehat{V}^{\left(k+1\right)}$ based on $V^{\left(k\right)}$, e.g. by gaussian random walk with parameter $\rho$.\label{enu:alg1-proposal}
\end{algor}
\item [{endwhile}]~
\end{algor}
\caption{Inversion Sampler For 1-Level Quantum Thermal Average Problem}

\label{alg:1-level algorithm}
\end{algorithm}

\subsection{Algorithm implementation}

\subsubsection{PIMD solver (Line \ref{enu:alg1-sample-q} in Alg. \ref{alg:1-level algorithm})}

To efficiently solve the forward problem, we implement a PIMD solver
in C++ and perform tests
on a machine equipped with a Intel Core i5-7300HQ
(single-threaded at 3.5GHz).

\subsubsection{Metropolis Hasting method (Line \ref{enu:alg1-mc-method} in Alg.
\ref{alg:1-level algorithm})}

The measure of interest $\mu^{\boldsymbol{y}}$ is absolutely continuous
with respect to $\mu_{0}$ and its R-N derivative is in proportion to $\exp\left(-\Phi\left(V;\boldsymbol{y}\right)\right)$.
So in order to sample $\mu^{\boldsymbol{y}}$ based on proposals from
$\mu_{0}$, according to Assu. \ref{assu:mcmc-reversible-coondition},
the decision function can be chosen as
\[
a\left(V_{old}\to V_{new}\right)=\exp\left(\min\left\{ 0,\Phi\left(V_{old}\right)-\Phi\left(V_{new}\right)\right\} \right)
\]
so that the detailed balance condition will be satisfied as long as
the proposal kernel is reversible with respect to the prior distribution.

\subsubsection{Proposal kernel of potential function (Line \ref{enu:alg1-proposal}
in Alg. \ref{alg:1-level algorithm})}

The next step is to choose a proposal kernel which is reversible with respect to
the prior distribution. Recall that each potential function $V$ in
the truncated space $W_{L}^{\boldsymbol{1}}$ can be expressed as
\[
V=V_{o}+\sum_{i=0}^{L}\xi_{i}\gamma_{i}\phi_{i}
\]
while the covariance operator (defined in Eqn. \ref{eq:definition truncated covariance})
has the matrix form $\boldsymbol{diag}\left(\gamma_{0}^{2},\dots\gamma_{L}^{2}\right)$,
so to propose $V$ is essentially to propose $\left\{ \xi_{i}\right\} _{i=0}^{L}$.

To avoid high rejection rate, we adopt the gaussian random walk (in \cite{Cotter2012} also called ``pre-conditioned CN proposal'') in the proposal stage: given a gaussian random variable $r$ and parameter $ 0<\rho<1$, then by generating an independent standard gaussian variable $g \sim \mathcal{N}\left(0, 1\right)$ we obtain
\[
r^* = \rho r + \sqrt{1-\rho^2} g
\]
which shares the same mean and variance with $r$. Moreover, the correlation between $r^*$ and $r$ is exactly $\rho$, so a larger $\rho$ can make two successive proposals closer while keeping the desired invariant distribution unchanged.

In the 2-level problem we also need to propose r.v. that follow the exponential distribution. Based on the observation that the sum of squares of two standard gaussian variables follows the exponential distribution, we can prepare two standard gaussian variable $g_1,g_2\sim\mathcal{N}\left(0,1\right)$ and use the given exponential variable $r$ and parameter $0<\rho<1$ to obtain
\[
r^*=\left(\rho\sqrt{r}+\bar{\rho}g_{1}\right)^{2}+\left(\bar{\rho}g_{2}\right)^{2}, \bar{\rho} \triangleq \sqrt{\frac{1-\rho^2}2},
\]
which also shares the same mean and variance with $r$ and has a $\rho$ correlation with $r$.

\subsubsection{Regularity of $V\in W^{\boldsymbol{1}}$}
Recall that elements in $W^{\boldsymbol{1}}$ can be expressed in the
form of 
\begin{equation}
V=V_{o}+\sum_{i=0}^{\infty}\xi_{i}\gamma_{i}\phi_{i}\label{eq:potential-decomposition}
\end{equation}
where $\left\lbrace \xi_i\right\rbrace $ are the random variables and $ \left\lbrace \phi_i \right\rbrace $ is the basis.
In particular, the decaying sequence $ \left\lbrace \gamma_i \right\rbrace $ serves as a parameter controlling the regularity of $V$. 
We assume that, given a constant parameter $\beta\in \left(0,1\right)$, the asymptotic behavior of $\left\lbrace \gamma_i \right\rbrace$ is given by
\begin{equation}
\gamma_i = \mathcal{O}\left(i^{-s}\right), s>\max\left\lbrace 1,\frac{\beta+2}{4\left(1-\beta\right)} \right\rbrace.
\end{equation}

Under such assumption, we have the following properties:

\begin{enumerate}
	\item The exponent $s$ is larger than 1, so by Prop. \ref{prop:function-series-converge-in-L-infty}, $V-V_o$ is in $L^\infty \cap L^2$ and $V$ is in $W^{\boldsymbol{1}}$.
	\item The exponent $s$ is larger than $\frac{\beta+2}{4\left(1-\beta\right)}$, so by Prop. \ref{prop:holder-continuity-of-function-series}, $V$ has $\beta$-order H\"{o}lder continuity.
\end{enumerate}

\section{Stability Analysis\label{sec:Prior-Analysis}}
 
In terms of stability, recall that the variable of interest
is $\mathfrak{q}|\boldsymbol{y}$ in distribution of $\pi^{\boldsymbol{y}}$,
so we want to establish a continuous dependency of $\pi^{\boldsymbol{y}}$
on $\boldsymbol{y}$. Targeted at this, the proof is structured in
the following manner. First, Lem. \ref{lem:1-level d(pi^V)<d(V)}
shows that the induced measure $\pi^{V}$ on configuration space continuously
depends on the potential $V$, based on which Cor. \ref{cor:1layer-proof-V-bound-avgA}
ensures that the thermal average $G^{A}\left(V\right)$ is a continuous
with respect to $V$. Then Lem. \ref{lem:1-level d(=00005Cmu^y)<d(y)}
shows that the induced measure $\mu^{\boldsymbol{y}}$ on potential
space continuously depends on the training observation $\boldsymbol{y}$.
The main theorem (Thm. \ref{thm:1-level d(=00005Cpi^y)<d(y) (main result)})
utilizes the two lemmas and the formula for $\pi^{\boldsymbol{y}}$
to control the perturbation between $\pi^{\boldsymbol{y}}$ by that
between $\mu^{\boldsymbol{y}}$, leading to the conclusion. Finally,
Cor. \ref{cor:1-level d(E(A'))<d(y)} shows how to bound the perturbation
in testing observations by that in training observations.
\begin{notation*}
Given a separable Banach space $\mathcal{X}$ and two functions $f_{1},f_{2}$
on $\mathcal{X}\times\mathcal{X}$. We call $f_{1}\apprle_{r}f_{2}$
if, for every fixed $r>0$, there is $C=C\left(r\right)>0$ such that
\[
f_{1}\left(x,x'\right)\leq Cf_{2}\left(x,x'\right),\forall x,x'\in B_{\mathcal{X}}\left(0,r\right).
\]
\end{notation*}
\begin{lem}
\label{lem:1-level d(pi^V)<d(V)}Given two potential functions $V_{1},V_{2}\in W^{\boldsymbol{1}}$,
we have
\[
d_{Hell}\left(\pi^{V_{1}},\pi^{V_{2}}\right)\lesssim_{r}\left|\left|V_{1}-V_{2}\right|\right|_{W^{\boldsymbol{1}}}
\]
where the definition of Hellinger distance is given by Def. \ref{def:Hellinger-distance}
in the Appendix.
\end{lem}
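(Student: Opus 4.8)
The plan is to write both Gibbs measures against the common reference $\pi_0$ and reduce the statement to a standard posterior-stability estimate. From the formula for the Radon--Nikodym derivative of $\pi^V$ with respect to $\pi_0$, both densities are governed by the single functional
\[
\Psi(V;\mathfrak{q})\triangleq\int_{0}^{\beta}\mathring{V}\bigl(\mathfrak{q}(\tau)\bigr)\,d\tau,\qquad \frac{d\pi^{V_i}}{d\pi_0}(\mathfrak{q})=\frac{1}{\mathcal{Z}^{V_i}}\exp\bigl(-\Psi(V_i;\mathfrak{q})\bigr).
\]
The situation then matches the template of the abstract Bayesian stability lemma in \cite{Dashti2017,Stuart2010}: the Hellinger distance between two measures with densities proportional to $e^{-\Psi(V_i;\cdot)}$ relative to a common probability measure is controlled by the $\pi_0$-average of $\lvert\Psi(V_1;\cdot)-\Psi(V_2;\cdot)\rvert$, provided the normalizers $\mathcal{Z}^{V_1},\mathcal{Z}^{V_2}$ are bounded away from $0$ and $\infty$. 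So the entire task is to convert these two ingredients into the $W^{\boldsymbol 1}$-norm of $V_1-V_2$.

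The decisive point is that $W^{\boldsymbol 1}\hookrightarrow L^{\infty}$, so the correction action is uniformly bounded along every loop. Indeed, pointwise in $\mathfrak{q}$,
\[
\bigl|\Psi(V_1;\mathfrak{q})-\Psi(V_2;\mathfrak{q})\bigr|\le\int_{0}^{\beta}\bigl|(V_1-V_2)(\mathfrak{q}(\tau))\bigr|\,d\tau\le\beta\,\|V_1-V_2\|_{L^{\infty}}\le\beta\,\|V_1-V_2\|_{W^{\boldsymbol 1}},
\]
a bound independent of $\mathfrak{q}$. The same estimate applied to each $V_i$ gives $\lvert\Psi(V_i;\mathfrak{q})\rvert\le\beta\|\mathring{V_i}\|_{L^{\infty}}\le\beta r$ whenever $\|V_i\|_{W^{\boldsymbol 1}}\le r$, and since $\pi_0$ is a probability measure this yields $e^{-\beta r}\le\mathcal{Z}^{V_i}\le e^{\beta r}$. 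This is exactly where the choice of $W^{\boldsymbol 1}$ pays off: no integrability or moment estimate of $\Psi$ under $\pi_0$ is needed, because the bound is uniform in the loop.

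With these two facts in hand, I would carry out the usual splitting. Writing $\sqrt{d\pi^{V_1}/d\pi_0}-\sqrt{d\pi^{V_2}/d\pi_0}$ and separating the contribution of the densities from that of the normalizers (triangle inequality together with $(a+b)^2\le 2a^2+2b^2$), one bounds the first term using the elementary estimate $\lvert e^{-a/2}-e^{-b/2}\rvert\le\tfrac12 e^{\beta r/2}\lvert a-b\rvert$ (valid since $\lvert a\rvert,\lvert b\rvert\le\beta r$) combined with the uniform bound on $\lvert\Psi(V_1;\cdot)-\Psi(V_2;\cdot)\rvert$, and the second term using $\lvert\mathcal{Z}^{V_1}-\mathcal{Z}^{V_2}\rvert\le e^{\beta r}\int\lvert\Psi(V_1;\cdot)-\Psi(V_2;\cdot)\rvert\,d\pi_0\le\beta e^{\beta r}\|V_1-V_2\|_{W^{\boldsymbol 1}}$ together with the lower bounds on the normalizers. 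Collecting the pieces gives $d_{Hell}\bigl(\pi^{V_1},\pi^{V_2}\bigr)^2\le C(r)\|V_1-V_2\|_{W^{\boldsymbol 1}}^2$, which is the claim. There is no genuinely hard analytic step here; the only thing requiring care is the bookkeeping of the $r$-dependence: every constant grows with $r$ through factors $e^{\pm\beta r}$, which is precisely why the estimate is local (the $\lesssim_r$ notation) rather than global, and one must trace each such factor through the two-term split so that the final constant depends on $r$ alone.
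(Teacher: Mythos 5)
Your proof is correct and rests on exactly the two estimates the paper itself uses: the uniform bound $\left|\Psi\left(V;\mathfrak{q}\right)\right|\le\beta\left|\left|V\right|\right|_{W^{\boldsymbol{1}}}$ and the Lipschitz bound $\left|\Psi\left(V_{1};\mathfrak{q}\right)-\Psi\left(V_{2};\mathfrak{q}\right)\right|\le\beta\left|\left|V_{1}-V_{2}\right|\right|_{W^{\boldsymbol{1}}}$, which is precisely the paper's verification of Assu.~\ref{assu:phi-regularity} with $M_{1}\left(r\right)=\beta r$ and $M_{2}=\beta$. The only difference is presentational: the paper then invokes Thm.~\ref{thm:well-posedness} as a black box, whereas you unfold its proof inline (the two-term Hellinger split together with the normalizer bounds $e^{-\beta r}\le\mathcal{Z}^{V_{i}}\le e^{\beta r}$), which is legitimate and even slightly cleaner here, since your bounds are uniform in $\mathfrak{q}$ and thus make the theorem's integrability hypothesis trivially satisfied.
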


The proof of this lemma is a direct application of Thm. \ref{thm:well-posedness}
in the Appendix.
\begin{proof}
First we check the conditions for Assu. \ref{assu:phi-regularity}:
notice that $\Phi\left(\mathfrak{q};V\right)=\int_{0}^{\beta}\mathring{V}\circ\mathfrak{q}\,d\tau$
is continuous as a function of $\mathfrak{q}$, and that
\[
\Phi\left(\mathfrak{q};V\right)\ge-\beta\left|\left|\mathring{V}\right|\right|_{L^{\infty}}\ge-\beta\left|\left|V\right|\right|_{W^{\boldsymbol{1}}},
\]
\[
\left|\Phi\left(\mathfrak{q};V_{1}\right)-\Phi\left(\mathfrak{q};V_{2}\right)\right|=\left|\int_{0}^{\beta}\left(V_{1}-V_{2}\right)\circ\mathfrak{q}\,d\tau\right|\le\beta\left|\left|V_{1}-V_{2}\right|\right|_{\infty}\le\beta\left|\left|V_{1}-V_{2}\right|\right|_{W^{\boldsymbol{1}}},
\]
thus we can take $M_{1}\left(r\right)=\beta r$ and $M_{2}=\beta$.

Since $M_{1}$ and $M_{2}$ are constant with respect to $\mathfrak{q}$, by
Thm. \ref{thm:well-posedness}, there exists $C=C\left(r\right)$
such that for all $V_{1},V_{2}\in B_{W^{\boldsymbol{1}}}\left(0,r\right)$,
\[
d_{Hell}\left(\pi^{V_{1}},\pi^{V_{2}}\right)\le C\left|\left|V_{1}-V_{2}\right|\right|_{W^{\boldsymbol{1}}}.
\]
\end{proof}
\begin{cor}
\label{cor:1layer-proof-V-bound-avgA}The ensemble average of a given
bounded observable $A$ is continuous as a function of the potential
function, i.e.
\[
\left|G^{A}\left(V_{1}\right)-G^{A}\left(V_{2}\right)\right|\lesssim_{r}\left|\left|V_{1}-V_{2}\right|\right|_{W^{\boldsymbol{1}}}
\]
where $G^{A}\left(V\right)\triangleq\mathbb{E}_{\pi^{V}\left(d\mathfrak{q}\right)}\frac{1}{\beta}\int_{0}^{\beta}A\circ\mathfrak{q}\,d\tau=\mathbb{E}_{\pi^{V}\left(d\mathfrak{q}\right)}\overline{A}\left[\mathfrak{q}\right]$.
\end{cor}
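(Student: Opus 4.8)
The plan is to reduce the statement to Lemma~\ref{lem:1-level d(pi^V)<d(V)} via the standard fact that, for a square-integrable observable, the difference of its expectations under two measures is controlled by their Hellinger distance. First I would rewrite the quantity of interest as a difference of expectations of one fixed functional: by the definition of $G^{A}$,
\[
G^{A}\left(V_{1}\right)-G^{A}\left(V_{2}\right)=\mathbb{E}_{\pi^{V_{1}}}\overline{A}\left[\mathfrak{q}\right]-\mathbb{E}_{\pi^{V_{2}}}\overline{A}\left[\mathfrak{q}\right],
\]
so that the two potentials enter only through the two measures $\pi^{V_{1}},\pi^{V_{2}}$, while the integrand $\overline{A}\left[\mathfrak{q}\right]=\frac{1}{\beta}\int_{0}^{\beta}A\circ\mathfrak{q}\,d\tau$ is identical on both sides.

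The second step is to observe that $\overline{A}$ is bounded. Since $A$ is a bounded observable, $\left|\overline{A}\left[\mathfrak{q}\right]\right|\le\left|\left|A\right|\right|_{L^{\infty}}$ for every loop $\mathfrak{q}\in X$, hence $\overline{A}\in L^{2}\left(\pi^{V_{1}}\right)\cap L^{2}\left(\pi^{V_{2}}\right)$ with both $L^{2}$-norms bounded by $\left|\left|A\right|\right|_{L^{\infty}}$, uniformly in $V_{1},V_{2}$. This is the only place the boundedness hypothesis on $A$ is used, and it is precisely what frees us from having to track the moments of $\overline{A}$.

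Third, I would invoke the standard estimate from the Bayesian inverse-problem toolbox (see \cite{Dashti2017}): for any $f\in L^{2}\left(\mu_{1}\right)\cap L^{2}\left(\mu_{2}\right)$,
\[
\left|\mathbb{E}_{\mu_{1}}f-\mathbb{E}_{\mu_{2}}f\right|\le C\left(\sqrt{\mathbb{E}_{\mu_{1}}\left|f\right|^{2}}+\sqrt{\mathbb{E}_{\mu_{2}}\left|f\right|^{2}}\right)d_{Hell}\left(\mu_{1},\mu_{2}\right).
\]
Applying this with $f=\overline{A}$, $\mu_{1}=\pi^{V_{1}}$, $\mu_{2}=\pi^{V_{2}}$, and using the uniform bound from the previous step, I obtain $\left|G^{A}\left(V_{1}\right)-G^{A}\left(V_{2}\right)\right|\le 2C\left|\left|A\right|\right|_{L^{\infty}}\,d_{Hell}\left(\pi^{V_{1}},\pi^{V_{2}}\right)$.

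Finally, I would close the loop by feeding in Lemma~\ref{lem:1-level d(pi^V)<d(V)}, which bounds $d_{Hell}\left(\pi^{V_{1}},\pi^{V_{2}}\right)\lesssim_{r}\left|\left|V_{1}-V_{2}\right|\right|_{W^{\boldsymbol{1}}}$ for $V_{1},V_{2}\in B_{W^{\boldsymbol{1}}}\left(0,r\right)$; composing the two bounds yields the claimed $\left|G^{A}\left(V_{1}\right)-G^{A}\left(V_{2}\right)\right|\lesssim_{r}\left|\left|V_{1}-V_{2}\right|\right|_{W^{\boldsymbol{1}}}$, with the constant depending on $r$ and on $\left|\left|A\right|\right|_{L^{\infty}}$. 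I do not anticipate a genuine obstacle: the essential analytic work is already carried out in Lemma~\ref{lem:1-level d(pi^V)<d(V)}, and the remainder is merely the boundedness of $\overline{A}$ together with the Hellinger--expectation inequality. The only points meriting mild care are confirming that the constant in the Hellinger--expectation inequality is absolute, so that the $r$-dependence of the final bound enters solely through Lemma~\ref{lem:1-level d(pi^V)<d(V)}, and that the $L^{2}$-norms of $\overline{A}$ are genuinely uniform in $V$ rather than only finite.
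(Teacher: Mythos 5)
Your proposal is correct and follows essentially the same route as the paper: the paper also reduces the corollary to Lem.~\ref{lem:1-level d(pi^V)<d(V)} by noting that $\overline{A}$ is a bounded functional and invoking a standard ``expectation difference is controlled by the distance between measures'' inequality (Lem.~\ref{lem:continuity-linfty-observable}). The only cosmetic difference is that the paper bounds $\left|\mathbb{E}_{\pi^{V_1}}\overline{A}-\mathbb{E}_{\pi^{V_2}}\overline{A}\right|$ by the total variation distance via the $L^{\infty}$ bound and then passes to the Hellinger distance through Lem.~\ref{lem:tv-hellinger-inequality}, whereas you go directly to the Hellinger distance via the $L^{2}$-moment inequality from \cite{Dashti2017}; both are valid and yield the same $r$-dependence through Lem.~\ref{lem:1-level d(pi^V)<d(V)}.
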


\begin{proof}
This is a direct corollary of Lem. \ref{lem:continuity-linfty-observable}
and Lem. \ref{lem:1-level d(pi^V)<d(V)} since $G^{A}\left(V\right) $
is a bounded function on $W^{\boldsymbol{1}}$.
\end{proof}
\begin{lem}
\label{lem:1-level d(=00005Cmu^y)<d(y)}Given two training observations
$\boldsymbol{y}_{1},\boldsymbol{y}_{2}$ and assume the training observable
$A$ is bounded, we have
\[
d_{Hell}\left(\mu^{\boldsymbol{y}_{1}},\mu^{\boldsymbol{y}_{2}}\right)\lesssim_{r}\left|\left|\boldsymbol{y}_{1}-\boldsymbol{y}_{2}\right|\right|_{2}
\]
where $\left|\left|\cdot\right|\right|_{2}$ is the Euclidean norm
on $\mathbb{R}^{N_{T}}$ and the Hellinger distance is given by Def.
\ref{def:Hellinger-distance}.
\end{lem}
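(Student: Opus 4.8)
The plan is to mirror the proof of Lem.~\ref{lem:1-level d(pi^V)<d(V)}: I would again appeal to the abstract well-posedness result Thm.~\ref{thm:well-posedness}, but now with the roles of the variables exchanged. Here the unknown is the potential $V\in W^{\boldsymbol{1}}$, the prior is the Gaussian measure $\mu_{0}$, the varying data is $\boldsymbol{y}\in Y$, and the negative log-likelihood is the $\Phi(V;\boldsymbol{y})$ defined just before Eqn.~\ref{eq:1-level definition of =00005Cmu^y}. Since the posterior $\mu^{\boldsymbol{y}}$ is exactly the reweighting of $\mu_{0}$ by $\exp(-\Phi(V;\boldsymbol{y}))/Z(\boldsymbol{y})$, it suffices to verify that $\Phi$, viewed as a function of $\boldsymbol{y}$, satisfies the lower-bound and local-Lipschitz conditions of Assu.~\ref{assu:phi-regularity}, with constants controlled on balls $B_{Y}(0,r)$.

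First I would record the key algebraic identity. Writing $g\triangleq G^{A}(V)$ and expanding the squares, the two $\boldsymbol{y}$-quadratic terms cancel, leaving $\Phi(V;\boldsymbol{y})=\tfrac12\langle\Gamma_{\boldsymbol{\eta}}^{-1}g,g\rangle-\langle\Gamma_{\boldsymbol{\eta}}^{-1}\boldsymbol{y},g\rangle$, so that for two data vectors
\[
\Phi(V;\boldsymbol{y}_{1})-\Phi(V;\boldsymbol{y}_{2})=-\big\langle\Gamma_{\boldsymbol{\eta}}^{-1}(\boldsymbol{y}_{1}-\boldsymbol{y}_{2}),\,G^{A}(V)\big\rangle.
\]
The decisive observation is that $A$ is bounded, so each component of $G^{A}(V)=\mathbb{E}_{\pi^{V}}\overline{A}[\mathfrak{q}]$ is bounded in absolute value by $\|A\|_{\infty}$, giving $\|G^{A}(V)\|_{2}\le\sqrt{N_T}\,\|A\|_{\infty}$ uniformly in $V$. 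Combined with the boundedness of $\Gamma_{\boldsymbol{\eta}}^{-1}$, this yields $|\Phi(V;\boldsymbol{y}_{1})-\Phi(V;\boldsymbol{y}_{2})|\le M_{2}\,\|\boldsymbol{y}_{1}-\boldsymbol{y}_{2}\|_{2}$ with $M_{2}$ independent of $V$, the required Lipschitz-in-data estimate. The same identity with $\boldsymbol{y}_{2}=\boldsymbol{0}$ and $\|\boldsymbol{y}\|_{2}\le r$ gives a two-sided bound $|\Phi(V;\boldsymbol{y})|\le M_{1}(r)$ uniform in $V$, whence $Z(\boldsymbol{y})$ is bounded above and below by positive constants depending only on $r$.

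With these two inputs the conclusion follows from Thm.~\ref{thm:well-posedness}; equivalently, one can carry out the estimate directly by writing $2\,d_{Hell}^{2}=\int(\,Z(\boldsymbol{y}_{1})^{-1/2}e^{-\Phi_{1}/2}-Z(\boldsymbol{y}_{2})^{-1/2}e^{-\Phi_{2}/2}\,)^{2}\,d\mu_{0}$, splitting into a term controlled by $|\Phi_{1}-\Phi_{2}|$ through $|e^{-a}-e^{-b}|\le\tfrac12\max(e^{-a},e^{-b})\,|a-b|$ and a term controlled by $|Z(\boldsymbol{y}_{1})^{-1/2}-Z(\boldsymbol{y}_{2})^{-1/2}|$, then bounding $|Z(\boldsymbol{y}_{1})-Z(\boldsymbol{y}_{2})|\le\int|e^{-\Phi_{1}}-e^{-\Phi_{2}}|\,d\mu_{0}\lesssim_r\|\boldsymbol{y}_{1}-\boldsymbol{y}_{2}\|_{2}$ and invoking the lower bound on $Z$. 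I expect the only genuinely load-bearing step to be the uniform-in-$V$ boundedness of $G^{A}(V)$: this is precisely where the hypothesis that the training observable $A$ is bounded enters, and it is what makes $M_{2}$ independent of $V$ (hence trivially $\mu_{0}$-integrable), so that the constant delivered by Thm.~\ref{thm:well-posedness} depends on $r$ alone and matches the claimed $\lesssim_r$ bound.
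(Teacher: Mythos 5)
Your proposal is correct and takes essentially the same route as the paper: both verify the lower-bound and Lipschitz-in-$\boldsymbol{y}$ conditions of Assu.~\ref{assu:phi-regularity} for $\Phi\left(V;\boldsymbol{y}\right)$ and then invoke Thm.~\ref{thm:well-posedness}, with the boundedness of $A$ giving a uniform-in-$V$ bound on $G^{A}\left(V\right)$ (so the integrability hypothesis is trivial since $M_{1},M_{2}$ are constant in $V$); your cancellation identity $\Phi\left(V;\boldsymbol{y}_{1}\right)-\Phi\left(V;\boldsymbol{y}_{2}\right)=-\langle\Gamma_{\boldsymbol{\eta}}^{-1}\left(\boldsymbol{y}_{1}-\boldsymbol{y}_{2}\right),G^{A}\left(V\right)\rangle$ in fact slightly sharpens the paper's choice $M_{2}\left(r\right)=\left|\left|\Gamma_{\boldsymbol{\eta}}^{-1}\right|\right|_{2}\left(r+\left|\left|A\right|\right|_{\infty}\right)$ to a constant independent of $r$. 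The only step you leave implicit is the continuity requirement $\Phi\in C\left(W^{\boldsymbol{1}}\times Y;\mathbb{R}\right)$ in Assu.~\ref{assu:phi-regularity}, which the paper supplies via Cor.~\ref{cor:1layer-proof-V-bound-avgA}; since that corollary is already in hand, this omission is routine rather than a genuine gap.
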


The proof of this lemma is another direct application of Thm. \ref{thm:well-posedness}.
\begin{proof}
Again we check the conditions for Assu. \ref{assu:phi-regularity}:
notice that
\[
\Phi\left(V;\boldsymbol{y}\right)=\frac{1}{2}\left|\left|\Gamma_{\boldsymbol{\eta}}^{-\frac{1}{2}}\left(\boldsymbol{y}-G^{A}\left(V\right)\right)\right|\right|_2^{2}-\frac{1}{2}\left|\left|\Gamma_{\boldsymbol{\eta}}^{-\frac{1}{2}}\boldsymbol{y}\right|\right|_2^{2}
\]
is continuous as a function of the two variables $\left(V,\boldsymbol{y}\right)$
(by Cor. \ref{cor:1layer-proof-V-bound-avgA}), and that
\[
\Phi\left(V;\boldsymbol{y}\right)\ge-\frac{1}{2}\left|\left|\Gamma_{\boldsymbol{\eta}}^{-\frac{1}{2}}\boldsymbol{y}\right|\right|_{2}^{2},
\]
\[
\left|\Phi\left(V;\boldsymbol{y}_{1}\right)-\Phi\left(V;\boldsymbol{y}_{2}\right)\right|\le\left[\frac{1}{2}\left|\left|\boldsymbol{y}_{1}+\boldsymbol{y}_{2}\right|\right|_{2}+\left|\left|A\right|\right|_{\infty}\right]\left|\left|\Gamma_{\boldsymbol{\eta}}^{-1}\right|\right|_{2}\left|\left|\boldsymbol{y}_{1}-\boldsymbol{y}_{2}\right|\right|_{2},
\]
we can take $M_{1}=\frac{1}{2}\left|\left|\Gamma_{\boldsymbol{\eta}}^{-\frac{1}{2}}\boldsymbol{y}\right|\right|_{2}^{2}$
and $M_{2}\left(r\right)=\left|\left|\Gamma_{\boldsymbol{\eta}}^{-1}\right|\right|_{2}\left(r+\left|\left|A\right|\right|_{\infty}\right)$,
where $\left|\left|\Gamma_{\boldsymbol{\eta}}^{-1}\right|\right|_{2}$ is the operator
norm induced by the vector norm $\left|\left|\cdot\right|\right|_{2}$.
Since they are also constant with respect to $V$, by Thm. \ref{thm:well-posedness},
there is $C=C\left(r\right)$ such that for all $\boldsymbol{y}_{1},\boldsymbol{y}_{2}\in B_{Y}\left(0,r\right)$,
\[
d_{Hell}\left(\mu^{\boldsymbol{y}_{1}},\mu^{\boldsymbol{y}_{2}}\right)\le C\left|\left|\boldsymbol{y}_{1}-\boldsymbol{y}_{2}\right|\right|_{2}.
\]
\end{proof}
\begin{thm}
\label{thm:1-level d(=00005Cpi^y)<d(y) (main result)}Given two training
observations $\boldsymbol{y}_{1},\boldsymbol{y}_{2}$ and assume the
training observable $A$ is bounded, the distance between two induced
measure on $\mathcal{L}\mathbb{R}$ can be bounded by the their distance,
i.e.
\[
d_{TV}\left(\pi^{\boldsymbol{y}_{1}},\pi^{\boldsymbol{y}_{2}}\right)\lesssim_{r}\left|\left|\boldsymbol{y}_{1}-\boldsymbol{y}_{2}\right|\right|_{2}.
\]
\end{thm}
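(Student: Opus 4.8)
The plan is to reduce the stability of the configuration posterior $\pi^{\boldsymbol{y}}$ to the stability of the potential posterior $\mu^{\boldsymbol{y}}$ already controlled by Lem.~\ref{lem:1-level d(=00005Cmu^y)<d(y)}. The key structural input is the mixture representation \ref{eq:formulation d pi^y/d =00005Cpi_0}, which writes $d\pi^{\boldsymbol{y}}/d\pi_0$ as a $\mu_0$-average of the fixed kernel densities $d\pi^V/d\pi_0$ weighted by $d\mu^{\boldsymbol{y}}/d\mu_0$. Because the two observations $\boldsymbol{y}_1,\boldsymbol{y}_2$ feed into the \emph{same} kernel $V\mapsto\pi^V$, the total variation distance should contract from the potential space down to the configuration space.

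First I would express the total variation distance through Radon--Nikodym densities against the common reference $\pi_0$,
\[
d_{TV}\left(\pi^{\boldsymbol{y}_{1}},\pi^{\boldsymbol{y}_{2}}\right)=\frac{1}{2}\int_X\left|\frac{d\pi^{\boldsymbol{y}_{1}}}{d\pi_0}\left(\mathfrak{q}\right)-\frac{d\pi^{\boldsymbol{y}_{2}}}{d\pi_0}\left(\mathfrak{q}\right)\right|\pi_0\left(d\mathfrak{q}\right),
\]
and substitute \ref{eq:formulation d pi^y/d =00005Cpi_0} into the difference. Since only the factor $d\mu^{\boldsymbol{y}}/d\mu_0$ depends on the observation, the difference equals $\int_{W^{\boldsymbol{1}}}\bigl[\frac{d\mu^{\boldsymbol{y}_{1}}}{d\mu_0}(V)-\frac{d\mu^{\boldsymbol{y}_{2}}}{d\mu_0}(V)\bigr]\frac{d\pi^V}{d\pi_0}(\mathfrak{q})\,\mu_0(dV)$. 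Pulling the absolute value inside and then interchanging the $\mathfrak{q}$- and $V$-integrations by Tonelli (legitimate once the integrand is nonnegative), the inner integral collapses to $\int_X \frac{d\pi^V}{d\pi_0}(\mathfrak{q})\,\pi_0(d\mathfrak{q})=\int_X\pi^V(d\mathfrak{q})=1$ since $\pi^V$ is a probability measure. This leaves exactly
\[
d_{TV}\left(\pi^{\boldsymbol{y}_{1}},\pi^{\boldsymbol{y}_{2}}\right)\le d_{TV}\left(\mu^{\boldsymbol{y}_{1}},\mu^{\boldsymbol{y}_{2}}\right),
\]
which is precisely the data-processing contraction of total variation under the Markov kernel $V\mapsto\pi^V$.

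To connect with Lem.~\ref{lem:1-level d(=00005Cmu^y)<d(y)}, which is stated in the Hellinger metric, I would then apply the standard comparison $d_{TV}\le\sqrt{2}\,d_{Hell}$, obtaining
\[
d_{TV}\left(\pi^{\boldsymbol{y}_{1}},\pi^{\boldsymbol{y}_{2}}\right)\le\sqrt{2}\,d_{Hell}\left(\mu^{\boldsymbol{y}_{1}},\mu^{\boldsymbol{y}_{2}}\right)\lesssim_r\left|\left|\boldsymbol{y}_{1}-\boldsymbol{y}_{2}\right|\right|_2,
\]
where the final step is exactly Lem.~\ref{lem:1-level d(=00005Cmu^y)<d(y)} and the $r$-dependent constant is inherited from it. This chains to the desired estimate.

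The main point requiring care, rather than pure bookkeeping, is the reason the configuration-space distance is measured in total variation while the intermediate lemmas use Hellinger: the mixture representation is \emph{linear} in $d\mu^{\boldsymbol{y}}/d\mu_0$, so the contraction argument goes through cleanly for $d_{TV}$ (whose density difference is linear) but would be awkward for $d_{Hell}$ (whose square-root structure does not commute with the $\mu_0$-averaging). The only genuine analytic step is justifying the interchange of integration and the normalization $\int_X \pi^V(d\mathfrak{q})=1$; I expect no obstruction beyond this, since the substantive estimate has already been absorbed into Lem.~\ref{lem:1-level d(=00005Cmu^y)<d(y)}.
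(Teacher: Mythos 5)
Your proposal is correct and follows essentially the same route as the paper's proof: substitute the mixture representation of $\dfrac{d\pi^{\boldsymbol{y}}}{d\pi_{0}}$, pull the absolute value inside, interchange the integrals by Tonelli, use $\int_{X}\pi^{V}\left(d\mathfrak{q}\right)=1$ to contract to $d_{TV}\left(\mu^{\boldsymbol{y}_{1}},\mu^{\boldsymbol{y}_{2}}\right)$, then chain through $d_{TV}\le\sqrt{2}\,d_{Hell}$ and Lem.~\ref{lem:1-level d(=00005Cmu^y)<d(y)}. Your observation that this is a data-processing contraction under the kernel $V\mapsto\pi^{V}$, and your remark on why the linearity of the density difference makes $d_{TV}$ (rather than Hellinger) the natural metric on configuration space, are accurate glosses on the same argument.
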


\begin{proof}
The proof consists of two steps: first we will bound the distance
between $\pi^{\boldsymbol{y}}$ by the distance between $\mu^{\boldsymbol{y}}$,
then we will compare the latter part with the distance between $\boldsymbol{y}$.

By definition of TV distance and the fact that $\pi^{\boldsymbol{y}}\ll\pi_{0}$,
we can expand the left-hand side as
\begin{align*}
d_{TV}\left(\pi^{\boldsymbol{y}_{1}},\pi^{\boldsymbol{y}_{2}}\right) & =\int_{\mathcal{L}\mathbb{R}}\pi_{0}\left(d\mathfrak{q}\right)\left|\dfrac{d\pi^{\boldsymbol{y}_{1}}}{d\pi_{0}}\left(\mathfrak{q}\right)-\dfrac{d\pi^{\boldsymbol{y}_{2}}}{d\pi_{0}}\left(\mathfrak{q}\right)\right|\\
 & =\int_{\mathcal{L}\mathbb{R}}\pi_{0}\left(d\mathfrak{q}\right)\left|\int_{W^{\boldsymbol{1}}}\mu_{0}\left(dV\right)\thinspace\dfrac{d\pi^{V}}{d\pi_{0}}\left(\mathfrak{q}\right)\thinspace\left[\dfrac{d\mu^{\boldsymbol{y}_{1}}}{d\mu_{0}}\left(V\right)-\dfrac{d\mu^{\boldsymbol{y}_{2}}}{d\mu_{0}}\left(V\right)\right]\right|,
\end{align*}
where the second equation results from the definition of $\pi^{\boldsymbol{y}}$
(Eqn. \ref{eq:formulation d pi^y/d =00005Cpi_0}). By taking the absolute
sign into the second integral and a change to the order of integral (by Tonelli's theorem),
we have
\begin{align*}
d_{TV}\left(\pi^{\boldsymbol{y}_{1}},\pi^{\boldsymbol{y}_{2}}\right) & \le\int_{\mathcal{L}\mathbb{R}}\pi_{0}\left(d\mathfrak{q}\right)\left[\int_{W^{\boldsymbol{1}}}\mu_{0}\left(dV\right)\thinspace\dfrac{d\pi^{V}}{d\pi_{0}}\left(\mathfrak{q}\right)\thinspace\left|\dfrac{d\mu^{\boldsymbol{y}_{1}}}{d\mu_{0}}\left(V\right)-\dfrac{d\mu^{\boldsymbol{y}_{2}}}{d\mu_{0}}\left(V\right)\right|\right]\\
 & =\int_{W^{\boldsymbol{1}}}\mu_{0}\left(dV\right)\left[\int_{\mathcal{L}\mathbb{R}}\pi_{0}\left(d\mathfrak{q}\right)\thinspace\dfrac{d\pi^{V}}{d\pi_{0}}\left(\mathfrak{q}\right)\thinspace\left|\dfrac{d\mu^{\boldsymbol{y}_{1}}}{d\mu_{0}}\left(V\right)-\dfrac{d\mu^{\boldsymbol{y}_{2}}}{d\mu_{0}}\left(V\right)\right|\right].
\end{align*}
Notice that the R-N derivative $\dfrac{d\mu^{\boldsymbol{y}}}{d\mu_{0}}\left(V\right)$
does not depend on $\mathfrak{q}$, we can take the term outside the
second integral:
\[
d_{TV}\left(\pi^{\boldsymbol{y}_{1}},\pi^{\boldsymbol{y}_{2}}\right)\le\int_{W^{\boldsymbol{1}}}\left\{ \mu_{0}\left(dV\right)\left|\dfrac{d\mu^{\boldsymbol{y}_{1}}}{d\mu_{0}}\left(V\right)-\dfrac{d\mu^{\boldsymbol{y}_{2}}}{d\mu_{0}}\left(V\right)\right|\left[\int_{\mathcal{L}\mathbb{R}}\pi_{0}\left(d\mathfrak{q}\right)\thinspace\dfrac{d\pi^{V}}{d\pi_{0}}\left(\mathfrak{q}\right)\right]\right\} .
\]
Now we can directly calculate the second integral since $\int_{\mathcal{L}\mathbb{R}}\pi_{0}\left(d\mathfrak{q}\right)\thinspace\dfrac{d\pi^{V}}{d\pi_{0}}\left(\mathfrak{q}\right)=\int_{\mathcal{L}\mathbb{R}}\pi^{V}\left(d\mathfrak{q}\right)=1$,
so again by the definition of TV distance
\begin{align*}
d_{TV}\left(\pi^{\boldsymbol{y}_{1}},\pi^{\boldsymbol{y}_{2}}\right) & \le\int_{W^{\boldsymbol{1}}}\mu_{0}\left(dV\right)\left|\dfrac{d\mu^{\boldsymbol{y}_{1}}}{d\mu_{0}}\left(V\right)-\dfrac{d\mu^{\boldsymbol{y}_{2}}}{d\mu_{0}}\left(V\right)\right|\\
 & =d_{TV}\left(\mu^{\boldsymbol{y}_{1}},\mu^{\boldsymbol{y}_{2}}\right).
\end{align*}

By Lem. \ref{lem:tv-hellinger-inequality}, we can bound the TV distance
by the Hellinger distance, leading to
\begin{align*}
d_{TV}\left(\pi^{\boldsymbol{y}_{1}},\pi^{\boldsymbol{y}_{2}}\right)\le d_{TV}\left(\mu^{\boldsymbol{y}_{1}},\mu^{\boldsymbol{y}_{2}}\right) & \le\sqrt{2}d_{Hell}\left(\mu^{\boldsymbol{y}_{1}},\mu^{\boldsymbol{y}_{2}}\right)\\
 & \lesssim_{r}\left|\left|\boldsymbol{y}_{1}-\boldsymbol{y}_{2}\right|\right|_{2},
\end{align*}
where the second inequality holds as a result of Lem. \ref{lem:1-level d(=00005Cmu^y)<d(y)}.
\end{proof}
\begin{cor}
\label{cor:1-level d(E(A'))<d(y)}Given two training observations
$\boldsymbol{y}_{1},\boldsymbol{y}_{2}$ and assume the training and
testing observables $A,O$ are bounded, the difference between the
expectations of the testing observable w.r.t the two posterior distributions
can be bounded by the distance of training observations:
\[
\left|\mathbb{E}_{\pi^{\boldsymbol{y}_{1}}}\overline{O}\left[\mathfrak{q}\right]-\mathbb{E}_{\pi^{\boldsymbol{y}_{2}}}\overline{O}\left[\mathfrak{q}\right]\right|\lesssim_{r}\left|\left|\boldsymbol{y}_{1}-\boldsymbol{y}_{2}\right|\right|_{2}.
\]
\end{cor}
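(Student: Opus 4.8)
The plan is to reduce the claim directly to the main stability theorem (Thm.~\ref{thm:1-level d(=00005Cpi^y)<d(y) (main result)}) by exploiting the elementary fact that differences of integrals of a bounded test function against two probability measures are controlled by their total variation distance. First I would record that, since $O$ is bounded, the shorthand $\overline{O}\left[\mathfrak{q}\right]=\frac{1}{\beta}\int_{0}^{\beta}O\circ\mathfrak{q}\,d\tau$ is itself a bounded measurable functional on $X=\mathcal{L}\mathbb{R}$, with
\[
\left|\overline{O}\left[\mathfrak{q}\right]\right|\le\frac{1}{\beta}\int_{0}^{\beta}\left|O\left(\mathfrak{q}\left(\tau\right)\right)\right|d\tau\le\left|\left|O\right|\right|_{\infty}
\]
for every loop $\mathfrak{q}$. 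This boundedness is the only structural input needed from the testing observable.

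Next I would rewrite the difference of expectations against the common reference measure $\pi_{0}$. Since both posteriors satisfy $\pi^{\boldsymbol{y}_{1}},\pi^{\boldsymbol{y}_{2}}\ll\pi_{0}$, I would write
\[
\mathbb{E}_{\pi^{\boldsymbol{y}_{1}}}\overline{O}\left[\mathfrak{q}\right]-\mathbb{E}_{\pi^{\boldsymbol{y}_{2}}}\overline{O}\left[\mathfrak{q}\right]=\int_{\mathcal{L}\mathbb{R}}\overline{O}\left[\mathfrak{q}\right]\left(\dfrac{d\pi^{\boldsymbol{y}_{1}}}{d\pi_{0}}\left(\mathfrak{q}\right)-\dfrac{d\pi^{\boldsymbol{y}_{2}}}{d\pi_{0}}\left(\mathfrak{q}\right)\right)\pi_{0}\left(d\mathfrak{q}\right),
\]
and then pull the supremum bound on $\overline{O}$ out of the integral to obtain
\[
\left|\mathbb{E}_{\pi^{\boldsymbol{y}_{1}}}\overline{O}\left[\mathfrak{q}\right]-\mathbb{E}_{\pi^{\boldsymbol{y}_{2}}}\overline{O}\left[\mathfrak{q}\right]\right|\le\left|\left|O\right|\right|_{\infty}\int_{\mathcal{L}\mathbb{R}}\left|\dfrac{d\pi^{\boldsymbol{y}_{1}}}{d\pi_{0}}\left(\mathfrak{q}\right)-\dfrac{d\pi^{\boldsymbol{y}_{2}}}{d\pi_{0}}\left(\mathfrak{q}\right)\right|\pi_{0}\left(d\mathfrak{q}\right)=\left|\left|O\right|\right|_{\infty}\,d_{TV}\left(\pi^{\boldsymbol{y}_{1}},\pi^{\boldsymbol{y}_{2}}\right).
\]
The final equality uses precisely the $L^{1}$-of-densities normalization of $d_{TV}$ that appears in the proof of the main theorem, which is what makes the constant $\left|\left|O\right|\right|_{\infty}$ enter cleanly.

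Finally I would invoke Thm.~\ref{thm:1-level d(=00005Cpi^y)<d(y) (main result)} to bound $d_{TV}\left(\pi^{\boldsymbol{y}_{1}},\pi^{\boldsymbol{y}_{2}}\right)\lesssim_{r}\left|\left|\boldsymbol{y}_{1}-\boldsymbol{y}_{2}\right|\right|_{2}$; combining this with the previous display and absorbing the factor $\left|\left|O\right|\right|_{\infty}$ into the implicit constant $C\left(r\right)$ yields the asserted estimate. I do not expect a genuine obstacle here, since the entire content is the bounded-test-function principle layered on top of the already-proven theorem. The only point meriting mild care is ensuring the total variation distance is used throughout with the same normalization as in the main theorem's proof, so that no stray factor of $2$ is introduced; everything else is routine.
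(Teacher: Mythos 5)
Your proposal is correct and takes essentially the same route as the paper, which proves the corollary by combining Thm.~\ref{thm:1-level d(=00005Cpi^y)<d(y) (main result)} with the bounded-test-function bound $\left|\mathbb{E}^{\mu^{y}}f-\mathbb{E}^{\mu^{y'}}f\right|\le C\left(f\right)d_{TV}\left(\mu^{y},\mu^{y'}\right)$, there packaged as Lem.~\ref{lem:continuity-linfty-observable}; your inlining of that lemma using $\pi_{0}$ as the dominating measure (rather than the lemma's $\nu=\frac{1}{2}\left(\mu^{y}+\mu^{y'}\right)$, which avoids assuming a common reference measure) is an immaterial variation, and your caution about the TV normalization is well placed but harmless since any factor of $2$ is absorbed into $C\left(r\right)$.
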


\begin{proof}
This is a direct corollary of Lem. \ref{lem:continuity-linfty-observable}
and Thm. \ref{thm:1-level d(=00005Cpi^y)<d(y) (main result)} since
$\overline{O}\left[\mathfrak{q}\right]=\frac{1}{\beta}\int_0^\beta O\left(\mathfrak{q}\left(\tau\right)\right)\,d\tau$ is a bounded function on $X$.
\end{proof}

When we apply those estimates on the proposed algorithm, the bead number $N$ and the truncation level $L$ are finite, so we have to give the corresponding conclusions. However, most of the proof are the same since the only difference is the underlying measure space. Thus we list the conclusions as follows without proof:
\begin{thm}
	\label{thm:1-level d(pi^y_L,N)<d(y) (main result)}Assume the bead number $N$ and the truncation level $L$ are given and finite. Given two training
	observations $\boldsymbol{y}_{1},\boldsymbol{y}_{2}$ and assume the
	training observable $A$ is bounded, the distance between two induced
	measure on $\mathbb{R}^N$ can be bounded by the their distance,
	i.e.
	\[
	d_{TV}\left(\pi^{\boldsymbol{y}_{1}}_{L, N},\pi^{\boldsymbol{y}_{2}}_{L, N}\right)\lesssim_{r}\left|\left|\boldsymbol{y}_{1}-\boldsymbol{y}_{2}\right|\right|_{2}.
	\]
\end{thm}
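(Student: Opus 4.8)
The plan is to transcribe the continuum-limit argument of Thm.~\ref{thm:1-level d(=00005Cpi^y)<d(y) (main result)} essentially line by line, replacing each object by its ring-polymer counterpart: the loop space $\mathcal{L}\mathbb{R}$ by $X_N=\mathbb{R}^N$, the potential space $W^{\boldsymbol{1}}$ by its truncation $W^{\boldsymbol{1}}_L$, the reference and Gibbs measures $\pi_0,\pi^V$ by $\pi_{0,N},\pi^V_N$, the potential-space measures $\mu_0,\mu^{\boldsymbol{y}}$ by $\mu_{0,L},\mu^{\boldsymbol{y}}_{L,N}$, and the forward maps $G^A,\Phi$ by $G^A_N,\Phi_N$. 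Since $N$ and $L$ are fixed and finite, the implicit constant $C(r)$ is allowed to depend on them, so no uniformity is needed; the only work is to re-verify the hypotheses of the abstract well-posedness result Thm.~\ref{thm:well-posedness} in the two finite-dimensional settings and then re-run the same manipulation of Radon--Nikodym derivatives.

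First I would prove the finite analogue of Lem.~\ref{lem:1-level d(pi^V)<d(V)}, i.e.\ $d_{Hell}(\pi^{V_1}_N,\pi^{V_2}_N)\lesssim_r\|V_1-V_2\|_{W^{\boldsymbol{1}}}$, by applying Thm.~\ref{thm:well-posedness} to the negative log potential $\Phi_N(\boldsymbol{q};V)=\beta_N\sum_{i=1}^N\mathring V(q_i)$ read off from the ring-polymer Radon--Nikodym derivative $\frac{d\pi^V_N}{d\pi_{0,N}}$. The two bounds in Assu.~\ref{assu:phi-regularity} hold with exactly the constants from the continuum case, thanks to the cancellation $\beta_N N=\beta$: one has $\Phi_N(\boldsymbol{q};V)\ge-\beta_N N\|\mathring V\|_{L^\infty}\ge-\beta\|V\|_{W^{\boldsymbol{1}}}$ and $|\Phi_N(\boldsymbol{q};V_1)-\Phi_N(\boldsymbol{q};V_2)|\le\beta_N N\|V_1-V_2\|_{L^\infty}\le\beta\|V_1-V_2\|_{W^{\boldsymbol{1}}}$, so $M_1(r)=\beta r$ and $M_2=\beta$ again, while continuity of $\Phi_N$ in $\boldsymbol{q}$ is automatic because every $V\in W^{\boldsymbol{1}}_L$ equals $V_o$ plus a finite (hence smooth) combination of Hermite functions. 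Feeding this Hellinger bound, together with $\|A\|_\infty<\infty$, into Lem.~\ref{lem:continuity-linfty-observable} then gives the analogue of Cor.~\ref{cor:1layer-proof-V-bound-avgA}, namely continuity of $G^A_N$ in $V$.

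With $G^A_N$ continuous I would next establish the analogue of Lem.~\ref{lem:1-level d(=00005Cmu^y)<d(y)}, $d_{Hell}(\mu^{\boldsymbol{y}_1}_{L,N},\mu^{\boldsymbol{y}_2}_{L,N})\lesssim_r\|\boldsymbol{y}_1-\boldsymbol{y}_2\|_2$, again via Thm.~\ref{thm:well-posedness}, now over the finite-dimensional Gaussian prior $\mu_{0,L}$ on $W^{\boldsymbol{1}}_L$ with $\Phi_N(V;\boldsymbol{y})$ as the potential; the lower bound $\Phi_N(V;\boldsymbol{y})\ge-\tfrac12\|\Gamma_{\boldsymbol{\eta}}^{-1/2}\boldsymbol{y}\|_2^2$ and the $\boldsymbol{y}$-Lipschitz bound with constant $M_2(r)=\|\Gamma_{\boldsymbol{\eta}}^{-1}\|_2(r+\|A\|_\infty)$ carry over verbatim, since $|G^A_N(V)|\le\|A\|_\infty$ just as for $G^A$. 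The main computation is then identical to the continuum proof: expanding $d_{TV}(\pi^{\boldsymbol{y}_1}_{L,N},\pi^{\boldsymbol{y}_2}_{L,N})$ through the R-N formula \ref{eq:formulation dpi^y_N/dpi_0N}, pushing the absolute value inside and interchanging the order of integration by Tonelli, and using $\int_{\mathbb{R}^N}\tfrac{d\pi^V_N}{d\pi_{0,N}}\,d\pi_{0,N}=1$ to collapse the $\boldsymbol{q}$-integral, yields $d_{TV}(\pi^{\boldsymbol{y}_1}_{L,N},\pi^{\boldsymbol{y}_2}_{L,N})\le d_{TV}(\mu^{\boldsymbol{y}_1}_{L,N},\mu^{\boldsymbol{y}_2}_{L,N})$. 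Bounding TV by $\sqrt2\,d_{Hell}$ via Lem.~\ref{lem:tv-hellinger-inequality} and invoking the finite analogue of Lem.~\ref{lem:1-level d(=00005Cmu^y)<d(y)} completes the chain.

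The step I expect to be the only genuinely non-mechanical one --- and hence the main (if mild) obstacle --- is confirming that the abstract hypotheses of Thm.~\ref{thm:well-posedness} and Assu.~\ref{assu:phi-regularity} truly apply to the two finite-dimensional reference measures $(\mathbb{R}^N,\pi_{0,N})$ and $(W^{\boldsymbol{1}}_L,\mu_{0,L})$, and in the correct order: the $V$-continuity of $\Phi_N(V;\boldsymbol{y})$ needed in the second application is not free but is supplied by the first lemma through the continuity of $G^A_N$, so the two applications must be chained rather than treated independently. Once one checks that $\pi_{0,N}$ and $\mu_{0,L}$ are bona fide Gaussian reference measures on separable Banach spaces and that the normalization constants $\mathcal{Z}_N^V$ and $Z_{L,N}(\boldsymbol{y})$ are finite and strictly positive (both immediate, since $\Phi_N$ is bounded below in $\boldsymbol{q}$ and measurable and integrable in $V$), the remainder is a direct transcription and requires no new idea.
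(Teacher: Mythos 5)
Your proposal is correct and is exactly what the paper intends: the authors state Thm.~\ref{thm:1-level d(pi^y_L,N)<d(y) (main result)} without proof, remarking that ``most of the proof are the same since the only difference is the underlying measure space,'' and your transcription faithfully reruns the continuum argument (Lem.~\ref{lem:1-level d(pi^V)<d(V)}, Cor.~\ref{cor:1layer-proof-V-bound-avgA}, Lem.~\ref{lem:1-level d(=00005Cmu^y)<d(y)}, then the R-N/Tonelli collapse and the TV--Hellinger bound) with $\pi_{0,N}$, $\pi^V_N$, $\mu_{0,L}$, $\mu^{\boldsymbol{y}}_{L,N}$, $G^A_N$, $\Phi_N$ in place of their continuum counterparts, including the correct constants via $\beta_N N=\beta$. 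Your observations that the two applications of Thm.~\ref{thm:well-posedness} must be chained (continuity of $G^A_N$ feeding the second) and that continuity of $\Phi_N$ in $\boldsymbol{q}$ is automatic on the truncated space $W^{\boldsymbol{1}}_L$ are accurate refinements consistent with the paper's continuum proof.
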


\begin{rem*}
Normally, one expects convergence $\pi^{\boldsymbol{y}}_{L, N}\to \pi^{\boldsymbol{y}}_{L, \infty}$ with respect to $N$ (or even $\to \pi^{\boldsymbol{y}}$ when both $L$ and $N$ go to infinity), but it is not entirely clear in this case, where the approximation property relies on the rigorous justification of the path integral formulation. Whereas, if there exists a proper norm $\left|\left|\cdot\right|\right|_*$ on the configuration space $X=\mathcal{L}\mathbb{R}$ s.t. the following asymptotic estimation holds for every potential function $V\in W^{\boldsymbol{1}}$
\[
\left|S\left[\mathfrak{q}\right]-S_N\left(R_N\left(\mathfrak{q}\right)\right)\right|=\left|\int_0^\beta \mathring{V} \circ \mathfrak{q}\,\text{d}\tau -\beta_N \sum_{i=1}^N V\left(\mathfrak{q}\left(i\beta_N\right)\right)\right|\le M\left(\left|\left|\mathfrak{q}\right|\right|_*\right)\psi\left(N\right),
\]
where the $R_N:\mathcal{L}\mathbb{R}\to \mathbb{R}^N, \left(R_N\left(\mathfrak{q}\right)\right)_i=\mathfrak{q}\left(i\beta_N\right)$ is the restriction operator, $M$ is a non-decreasing function and $\lim_{N\to\infty}\psi\left(N\right)=0$,
then by Thm 4.9 in \cite{Dashti2017}, the following convergence holds
\[
d_\text{Hell}\left(\pi^V_N, \pi^V\right) \apprle_{r} \psi\left(N\right).
\]
Based on this we can expect $\pi^{\boldsymbol{y}}_{L, N}\to \pi^{\boldsymbol{y}}_{L, \infty}$ by estimating the error of R-N derivatives $\dfrac{\text{d}\mu^{\boldsymbol{y}}_{L,N}}{\text{d}\mu_{0, L}}$ and $\dfrac{\text{d}\pi^{V}_{N}}{\text{d}\pi_{0, N}}$ when $N\to\infty$.
\end{rem*}

\begin{cor}
	\label{cor:1-level d(E_N,L(O))<d(y)}Assume the bead number $N$ and the truncation level $L$ are given and finite. Given two training observations
	$\boldsymbol{y}_{1},\boldsymbol{y}_{2}$ and assume the training and
	testing observables $A,O$ are bounded, the difference between the
	expectations of the testing observable w.r.t the two posterior distributions
	can be bounded by the distance of training observations:
	\[
	\left|\mathbb{E}_{\pi^{\boldsymbol{y}_{1}}_{L, N}}\overline{O}\left(\boldsymbol{q}\right)-\mathbb{E}_{\pi^{\boldsymbol{y}_{2}}_{L, N}}\overline{O}\left(\boldsymbol{q}\right)\right|\lesssim_{r}\left|\left|\boldsymbol{y}_{1}-\boldsymbol{y}_{2}\right|\right|_{2}.
	\]
\end{cor}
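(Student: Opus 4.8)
The plan is to treat this corollary as the exact finite-dimensional mirror of Cor.~\ref{cor:1-level d(E(A'))<d(y)}. The continuum proof there rests on two ingredients, namely the continuity estimate that controls the difference of expectations of a bounded functional by a total-variation distance (Lem.~\ref{lem:continuity-linfty-observable}) and the total-variation stability of the posterior with respect to the data (Thm.~\ref{thm:1-level d(=00005Cpi^y)<d(y) (main result)}). Both ingredients have ring-polymer counterparts already recorded in the text, so the whole argument transfers once the underlying space is changed from the loop space $X=\mathcal L\mathbb R$ to the finite-dimensional configuration space $X_N=\mathbb R^N$ and the posterior $\pi^{\boldsymbol y}$ is replaced by $\pi^{\boldsymbol y}_{L,N}$.

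First I would check that the test functional $\overline O(\boldsymbol q)=\frac1N\sum_{i=1}^N O(q_i)$ is a bounded function on $X_N=\mathbb R^N$: since $O$ is assumed bounded, each summand is bounded by $\|O\|_\infty$, hence $\|\overline O\|_\infty\le\|O\|_\infty$ uniformly in $N$. This places $\overline O$ in the class of functionals to which Lem.~\ref{lem:continuity-linfty-observable} applies. Invoking that lemma with the two posteriors $\pi^{\boldsymbol y_1}_{L,N}$ and $\pi^{\boldsymbol y_2}_{L,N}$ on $X_N$ yields
\[
\left|\mathbb E_{\pi^{\boldsymbol y_1}_{L,N}}\overline O(\boldsymbol q)-\mathbb E_{\pi^{\boldsymbol y_2}_{L,N}}\overline O(\boldsymbol q)\right|\lesssim_r d_{TV}\!\left(\pi^{\boldsymbol y_1}_{L,N},\pi^{\boldsymbol y_2}_{L,N}\right),
\]
with the implicit constant controlled by $\|O\|_\infty$. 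Finally I would apply Thm.~\ref{thm:1-level d(pi^y_L,N)<d(y) (main result)}, the finite-$N$, finite-$L$ stability estimate stated just above, to bound the right-hand side by $\|\boldsymbol y_1-\boldsymbol y_2\|_2$. Chaining the two estimates and absorbing the product of the constants into a single $r$-dependent constant gives the claimed bound.

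I do not expect a genuine obstacle, since this statement is listed without proof precisely because its argument copies the continuum case line by line. The only points deserving care are bookkeeping ones: confirming that Lem.~\ref{lem:continuity-linfty-observable} is phrased for a general measurable (separable Banach) space, so that it applies to $X_N=\mathbb R^N$ and not only to $X=\mathcal L\mathbb R$; and verifying that the two estimates compose over the common ball $B_Y(0,r)$ so that, under the $\lesssim_r$ convention, the resulting constant remains of the form $C(r)$. Both are immediate from how the earlier results are stated, so the corollary follows with no new difficulty beyond transcribing the continuum proof to the finite-dimensional setting.
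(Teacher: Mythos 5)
Your proposal is correct and is exactly the argument the paper intends: the paper states this corollary without proof because, as in the continuum case (Cor.~\ref{cor:1-level d(E(A'))<d(y)}), it follows immediately from Lem.~\ref{lem:continuity-linfty-observable} applied to the bounded functional $\overline{O}\left(\boldsymbol{q}\right)$ together with the finite-dimensional stability estimate Thm.~\ref{thm:1-level d(pi^y_L,N)<d(y) (main result)}, with only the underlying measure space changed from $X$ to $X_N$. Your bookkeeping checks (boundedness $\left|\left|\overline{O}\right|\right|_{\infty}\le\left|\left|O\right|\right|_{\infty}$ and the composability of the $r$-dependent constants) are the right ones and raise no issues.
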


\begin{rem*}
Similarly we can expect the convergence $\mathbb{E}_{\pi^{\boldsymbol{y}_{1}}_{L, N}}\overline{O}\left(R_N\left(\mathfrak{q}\right)\right)\to \mathbb{E}_{\pi^{\boldsymbol{y}}_{L, \infty}}\overline{O}\left[\mathfrak{q}\right]$ with respect to $N$ (or even $\to \mathbb{E}_{\pi^{\boldsymbol{y}}}\overline{O}\left[\mathfrak{q}\right]$) under certain assumptions made above, which means the test prediction under ring polymer representation is consistent with that under continuum limit.
We shall revisit this corollary on stability in the numerical study
section.
\end{rem*}

\section{Numerical tests (1 level system) \label{sec:Numerical-Study-on-1-Level-System}}

\subsection{Proof of concept\label{sec:1-level Proof-of-concept}}

First we will demonstrate that the algorithm works in a one-dimensional
system setting if the training observation $\boldsymbol{y^{*}}$ is
assumed without noise. The system is set-up as follows:
\begin{itemize}
\item Ground-truth potential function $V_{truth}=\frac{1}{2}x^{2}+5\sin\left(\frac{5x}{\pi}\right)\exp\left(-\frac{x^{2}}{2}\right)$,
shown in Fig. \ref{fig:1-level show-case illustration-potential}.
\item Training observable $A_{i}=e^{-\left(x-x_{i}\right)^{2}}$
where $x_{i}=\frac{i}{2}-2,i=0\dots8$, 
i.e. a series of Gaussians along the most probable locations.
\item Testing observable
$
O_{1}=\phi_{1}\left(2x\right),       
O_{2}=e^{-\left(x+1.25\right)^{2}},  
O_{3}=e^{-\left(x-0.25\right)^{2}},  
O_{4}=\phi_{2}\left(3x\right),       
O_{5}=\phi_{3}\left(3x\right)        
$.
\item The noise covariance matrix $\Gamma_{\boldsymbol{\eta}}=\widetilde{\Gamma_{\eta}}I^{N_{1}\times N_{1}}$ with $\widetilde{\Gamma_{\eta}}=10^{-3}$
\item The potential covariance sequence $\gamma_{j}=4\cdot j^{-1.2}\left(j=0\dots12\right).$
\item Particle mass $M=10$, truncation level $L=12$ and bead number $N=16$.
\end{itemize}
\begin{figure}[!htb]
\includegraphics[width=1\linewidth]{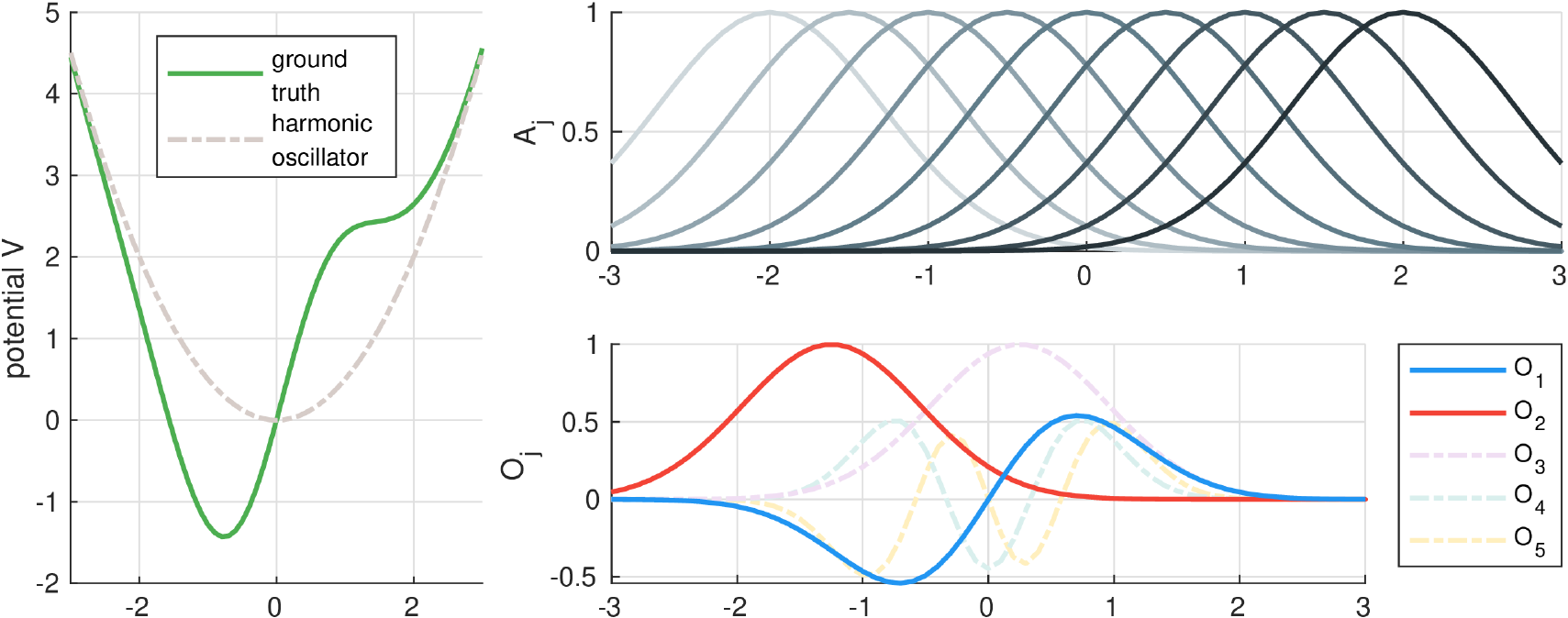}
\caption{Left figure: the ground truth potential $V_{truth}$ (green line) is lower 
near $x=-1$ and higher near $x=1$ as compared to the
harmonic oscillation potential (brown faded line, also used as the initial
potential $V^{\left(0\right)}$). \\
Top-right figure: illustration for
training observables $\left\lbrace A_j \right\rbrace_{j=0}^8$,
each of which is a Gaussian function distributed along the
most probable locations. \\
Bottom-right figure: illustration for testing
observables $\left\lbrace O_j \right\rbrace_{j=1}^5$,
varying in the size of support set and oscillation intensity.
Among the five testing observables, the first two will be selected for demonstration
since the rest three give similar results.}
\label{fig:1-level show-case illustration-potential}
\end{figure}

We ran 10 independent runs in total with 1600 proposals in each run.
We select two of the five testing observables to report since the results
of the rest three are similar.
In the following figures, we will always label the initial guess (obtained at the
first iteration, i.e. by setting the harmonic oscillation potential
$V_{o}=\frac{1}{2}x^{2}$) as the brown dotted line and the ground truth
(obtained by setting ground truth $V_{truth}$)
as the green dashed line; we also indicate twice the standard error by
the shaded area.

To ensure the algorithm indeed samples the entire landscape of the posterior distribution, we provide some plots on the statistic characteristics, where Fig. \ref{fig:1-level show-case acceptance_rate_mse} shows the acceptance rate and the decrease in mean squared error and Fig. \ref{fig:1-level show-case auto_correlation} shows the auto-correlation function for test observables and the coordinates $\xi_i$ of the sampled potential $V^{\left(k\right)}$. Since we use random gaussian walk in the proposal of $V^{\left(k\right)}$ and by default $\rho=0.95$, so we expect a large auto-correlation time. We fix $t_{ac}=50$ in the following calculations, which means we have $1600/50\times10=320$ independent samples in total.

\begin{figure}[!htb]
\includegraphics[width=1\linewidth]{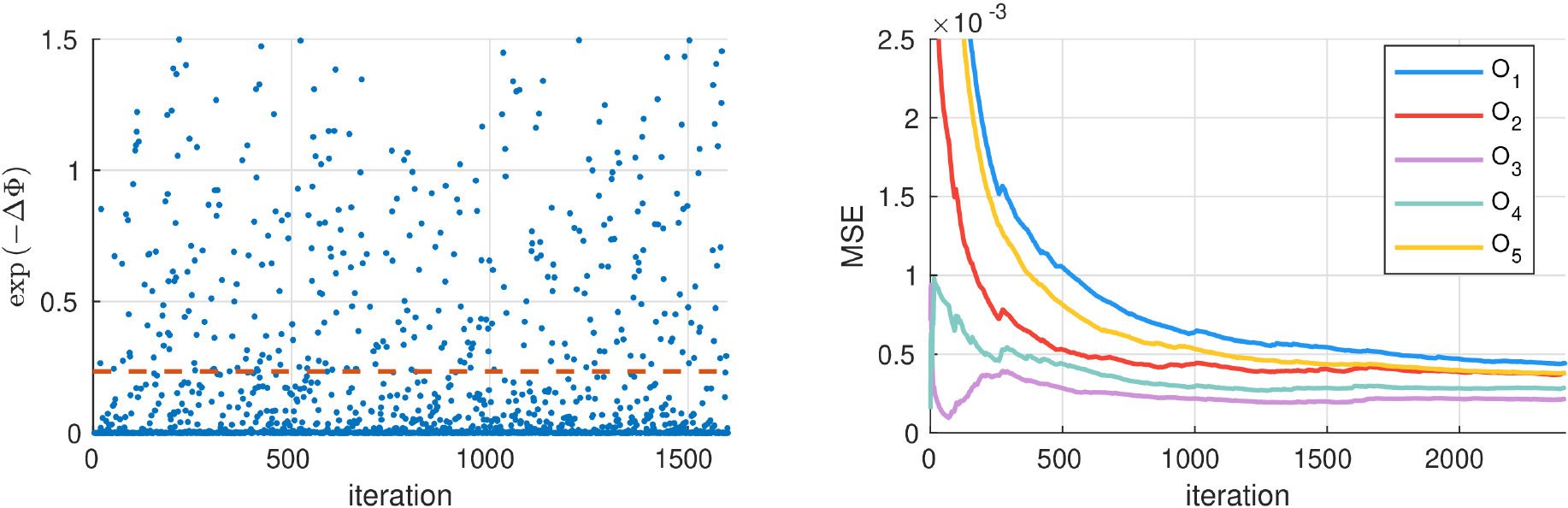}
	\caption{
		Left figure: transition probability ratio $\exp\left(-\Delta \Phi\right)=\exp\left(\Phi^{\left(k-1\right)}-\widehat{\Phi}^{\left(k\right)}\right)$ is plotted in blue dots and the mean acceptance rate is indicated by the red dashed line. Right figure: mean squared sampling error in test observables. The mean error stabilizes after the 1600th iteration. In both figures, the horizontal axis corresponds to the iteration of the sampling procedure.
	}
	\label{fig:1-level show-case acceptance_rate_mse}
\end{figure}

\begin{figure}[!htb]
\includegraphics[width=1\linewidth]{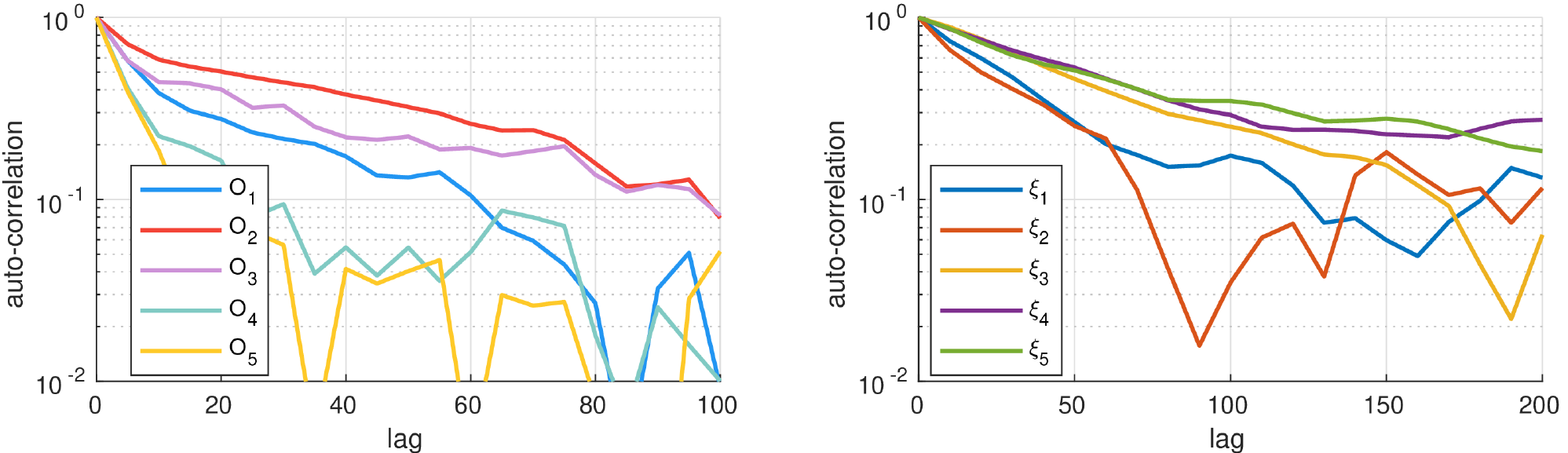}
	\caption{
		Auto-correlation in test observables (left figure) and in coordinates of the sampled potential (right figure). The horizontal axis corresponds to the lag between samples. When the lag reaches 50, the auto-correlation gets below 0.5 in every variable of both figures.
	}
	\label{fig:1-level show-case auto_correlation}
\end{figure}

The results of averaged sampled test observables,  potential and density function are shown in Fig. \ref{fig:1-level show-case averaged-observables}
and \ref{fig:1-level show-case averaged-potential, density}.
Judging from the figures, the potential and density landscapes can correctly sampled where the probability density is relatively large. As for the thermal average of the testing observables, the averaged samples will converge towards the ground truth and the standard error will decrease over iterations. We will further discuss the residual between the convergent value and the ground truth in the following sections.

\begin{figure}[!htb]
\includegraphics[width=1\linewidth]{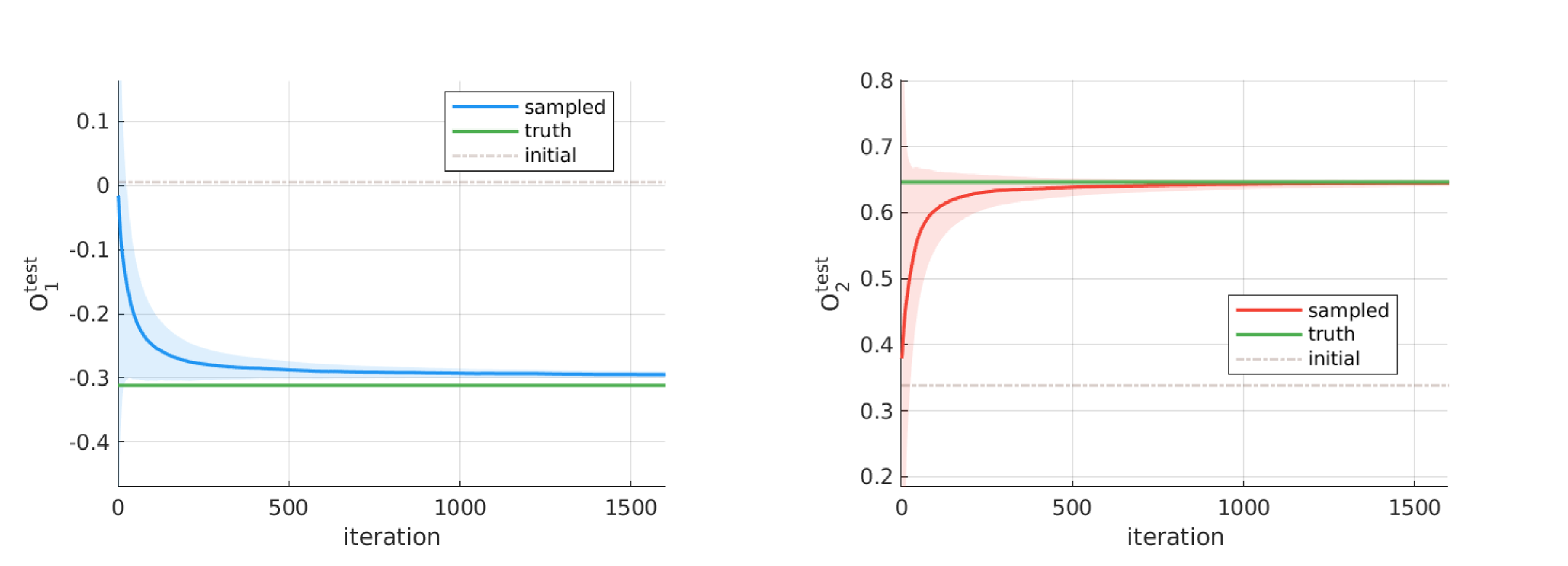}
\caption{
	Sampled test observations averaged along the proposal-decision iterations. In each of the two figures, the blue/red line corresponds to the averaged samples for	the 1st/2nd test observable. The shaded area indicates twice the standard error with respect to different independent runs. The green line stands for the ground-truth thermal average of the test observable. The faded brown line stands for the value sampled at the initial iteration by setting the potential $V$ to be the harmonic oscillation potential $V_o$.
}
\label{fig:1-level show-case averaged-observables}
\end{figure}

\begin{figure}[!htb]
\includegraphics[width=1\linewidth]{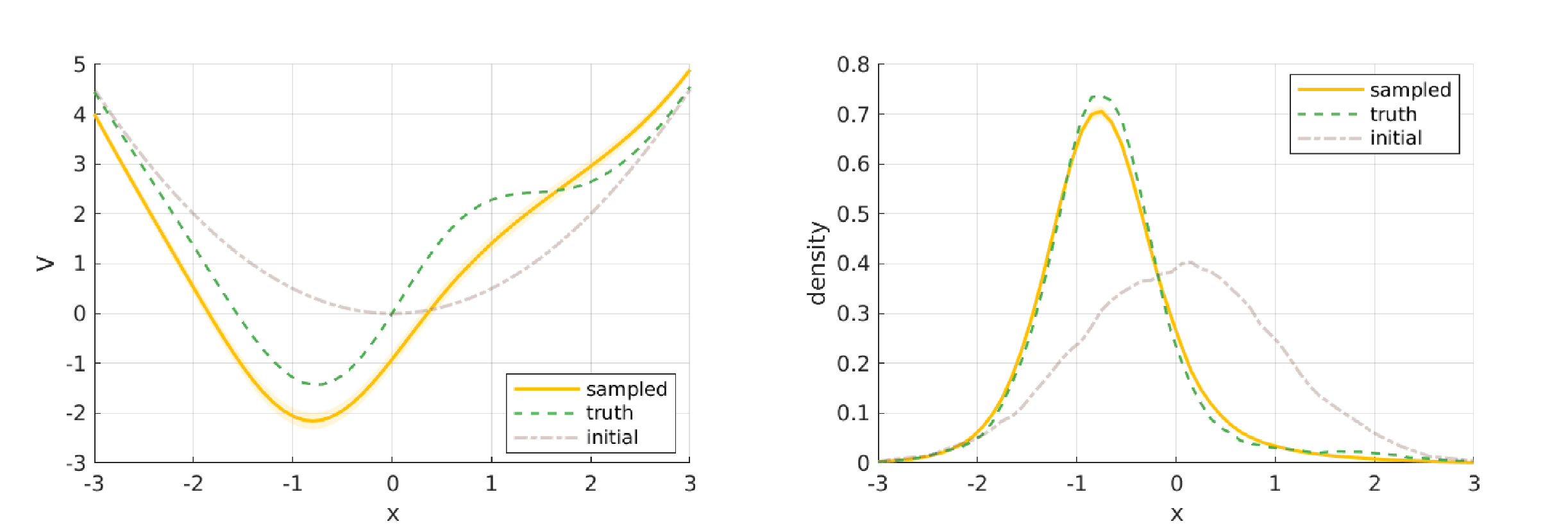}
\caption{
	Averaged sampled potential functions $\frac{1}{K_{total}} \sum_{k=1}^{K_{total}}V^{\left( k \right)}$ (left figure) and density functions (right figure). \\
	Shaded area indicates twice the standard error with respect to different independent runs. \\
	Green dashed line: obtained by setting the potential of the system to the ground-truth potential $V_{truth}$. \\
	Faded brown line: obtained from the harmonic oscillation potential $V_{o}$ (also used as the initial condition $V^{\left( 0 \right)}$).
}
\label{fig:1-level show-case averaged-potential, density}
\end{figure}

\subsection{Stability result: A numerical proof\label{sec:Stability-result A-numerical-proof}}

Recall the form of theorem on stability (Cor. \ref{cor:1-level d(E(A'))<d(y)}):
\[
\left|\mathbb{E}_{\pi^{\boldsymbol{y}_{1}}}\overline{O}\left[\mathfrak{q}\right]-\mathbb{E}_{\pi^{\boldsymbol{y}_{2}}}\overline{O}\left[\mathfrak{q}\right]\right|\lesssim_{r}\left|\left|\boldsymbol{y}_{1}-\boldsymbol{y}_{2}\right|\right|_{2},
\]
this inequality describes how the perturbation of output testing observations
are bounded by that of the input training observations. To be more specific,
we can interpret it in the following two ways:
\begin{enumerate}
\item Given noise covariance matrix $\Gamma_{\boldsymbol{\eta}}$, the theorem ensures
that the output predictions should rely on the input training observations
continuously, thus leading to a stable numerical algorithm.
\item More generally, the theorem also describes how consistent the output
predictions on test observables are if the noise covariance is specified.
Consider the following formal derivation. Let $\boldsymbol{y}^{*}$
be the thermal average of training observables and $V_{truth}$ be
the potential function. The distribution of the noisy training observation $\boldsymbol{y}$
is  $\mathcal{N}\left(\boldsymbol{y}^{*},\Gamma_{\boldsymbol{\eta}}\right)$,
merely a translation of the distribution of the noise $ \boldsymbol{\eta} $.
According to Jensen's inequality, the expectation of the input disturbance can be estimated by interchanging the expectation operator and the quadratic function:
\[
\mathbb{E}_{\boldsymbol{y}\sim\mathcal{N}\left(\boldsymbol{y}^{*},\Gamma_{\boldsymbol{\eta}}\right)}\left|\left|\boldsymbol{y}-\boldsymbol{y}^{*}\right|\right|_{2}\le\sqrt{\mathbb{E}_{\mathcal{N}\left(\boldsymbol{0},\Gamma_{\boldsymbol{\eta}}\right)}\left|\left|\boldsymbol{\boldsymbol{\eta}}\right|\right|_{2}^{2}}=\sqrt{\mathbf{Tr}\left(\Gamma_{\boldsymbol{\eta}}\right)}.
\]
On the other hand, assume that the negative log likelihood $\Phi\left(V;\boldsymbol{y}\right)$ admits a unique minimizer $V_{truth}$, then as $\mathbf{Tr}\left(\Gamma_{\boldsymbol{\eta}}\right)$
approaches zero, the measure $\mu^{\boldsymbol{y}^{*}}$ induced on
the potential space $W^{\boldsymbol{1}}$ converges to the delta
measure $\delta\left(V-V_{truth}\right)$, in which case the test prediction
$\mathbb{E}_{\mathfrak{q}\sim\pi^{\boldsymbol{y}}}\overline{O}\left[\mathfrak{q}\right] = \mathbb{E}_{V\sim\mu^{\boldsymbol{y}}} G^O\left(V\right) $
converges to $G^{O}\left(V_{truth}\right)$,
i.e. the ground-truth thermal average of test observables.
To sum up, if we draw a few training observations
from $\mathcal{N}\left(\boldsymbol{y}^{*},\Gamma_{\boldsymbol{\eta}}\right)$ and send them through the inversion process,
the average of the predictions on test observables can be written as a double expectation 
$
\mathbb{E}_{\boldsymbol{y}\sim\mathcal{N}\left(\boldsymbol{y}^{*},\Gamma_{\boldsymbol{\eta}}\right)}\mathbb{E}_{\mathfrak{q}\sim\pi^{\boldsymbol{y}}}\overline{O}\left[\mathfrak{q}\right]
$.
Thus the error between this average and the ground truth can be estimated by the following inequality
\begin{align}
\left|\text{error}\right|
&=\left|\mathbb{E}_{\boldsymbol{y}\sim\mathcal{N}\left(\boldsymbol{y}^{*},\Gamma_{\boldsymbol{\eta}}\right)}\mathbb{E}_{\mathfrak{q}\sim\pi^{\boldsymbol{y}}}\overline{O}\left[\mathfrak{q}\right]-G^{O}\left(V_{truth}\right)\right| \nonumber \\
&\le\left\lbrace\mathbb{E}_{\boldsymbol{y}\sim\mathcal{N}\left(\boldsymbol{y}^{*},\Gamma_{\boldsymbol{\eta}}\right)}\left|\mathbb{E}_{\mathfrak{q}\sim\pi^{\boldsymbol{y}}}\overline{O}\left[\mathfrak{q}\right]-\mathbb{E}_{\mathfrak{q}\sim\pi^{\boldsymbol{y}^{*}}}\overline{O}\left[\mathfrak{q}\right]\right|\right\rbrace
+ \left|\mathbb{E}_{\mathfrak{q}\sim\pi^{\boldsymbol{y}^{*}}}\overline{O}\left[\mathfrak{q}\right]-G^{O}\left(V_{truth}\right)\right| \nonumber \\
&\lesssim_{r} \left\lbrace\mathbb{E}_{\boldsymbol{y}\sim\mathcal{N}\left(\boldsymbol{y}^{*},\Gamma_{\boldsymbol{\eta}}\right)}\left|\left|\boldsymbol{y}-\boldsymbol{y}^{*}\right|\right|_{2}\right\rbrace 
+ \left|\mathbb{E}_{V\sim\mu^{\boldsymbol{y}^{*}}}G^{O}\left(V\right)-G^{O}\left(V_{truth}\right)\right| \nonumber \\
&\lesssim_{r}\sqrt{\mathbf{Tr}\left(\Gamma_{\boldsymbol{\eta}}\right)} + \mathbf{Tr}\left(\Gamma_{\boldsymbol{\eta}}\right) \cdot \sup_{V\in \boldsymbol{W}^1} \delta^2 G^O\left(V\right).
\label{eq:1-level stability-result error-formal-estimation}
\end{align}

When the covariance matrix is small enough, we can ignore the second term since
it is of higher order than the first one. However, we must point out these
derivations are formal since we assume that the negative log likelihood
$\Phi\left(V;\boldsymbol{y}\right)$ has only one minimizer $V_{truth}$ and 
the second order variational derivative $\delta^2G^O\left(V\right)$ is bounded
on $\boldsymbol{W}^1$.
The conditions are not easy to prove since the first
assumption is equivalent to finding a unique $V$ that can reproduce the
training observables, which is discussed in \cite{Mehats2010} with the density
function known.
The error analysis when multiple minimizers exist is possible, but it is far beyond the scope of the current paper. As is also shown in this section, the algorithm does not rely on the unique minimizer assumption to perform well.
\end{enumerate}
In the following experiments, we will assume different scales for
the noise covariance and compare the corresponding test observations.
As in the previous section, the noise covariance is set to $\Gamma_{\boldsymbol{\eta}}=\widetilde{\Gamma_{\eta}}I^{N_{1}\times N_{1}}$,
where $\widetilde{\Gamma_{\eta}}=0.01,0.03,0.1,0.3$.

\begin{figure}[!htb]
\includegraphics[width=1\linewidth]{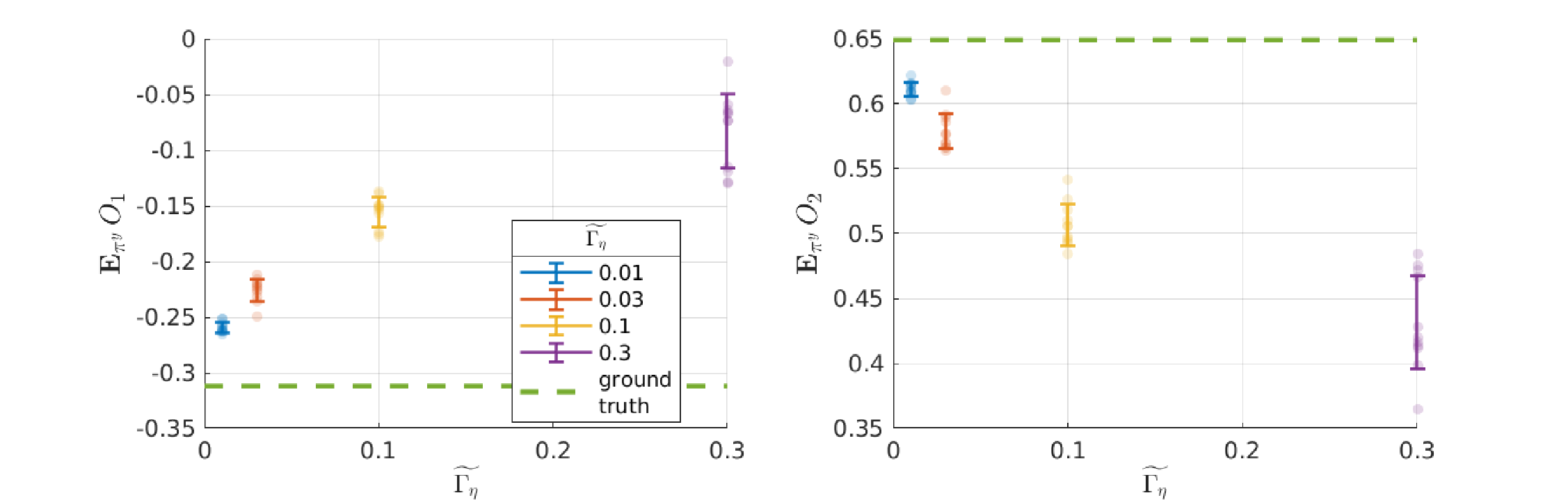}
\caption{
	Testing observations compared among experiments if different scales of noise covariance $\widetilde{\Gamma_{\eta}}$ are assumed. Horizontal axis: the scale of noise variance $\widetilde{\Gamma_{\eta}}$. Vertical axis: test observations, where the green dashed line stands for the ground truth. The solid bars stand for the mean and standard error of test observations (faded dots) obtained in numerical experiments.
}
\label{fig:1-level stability-result test-obs-gamma}
\end{figure}

Judging from Fig. \ref{fig:1-level stability-result test-obs-gamma}
we can see that a smaller $\widetilde{\Gamma_{\eta}}$ will lead to a more accurate mean and a smaller variance of the sampled test observations. Furthermore, if we fit
the error with the scale of noise covariance $\widetilde{\Gamma_{\eta}}=\frac{1}{N_{T}}\mathbf{Tr}\left(\Gamma_{\boldsymbol{\eta}}\right) $ ($N_T$ is the fixed number of training observables),
the slope in log scale (see
Fig. \ref{fig:1-level stability-result test-err-gamma}) is around $\frac{1}{2}$, which is consistent
with Eqn. \ref{eq:1-level stability-result error-formal-estimation}.

\begin{figure}[!htb]
\begin{centering}
\includegraphics[width=0.6\linewidth]{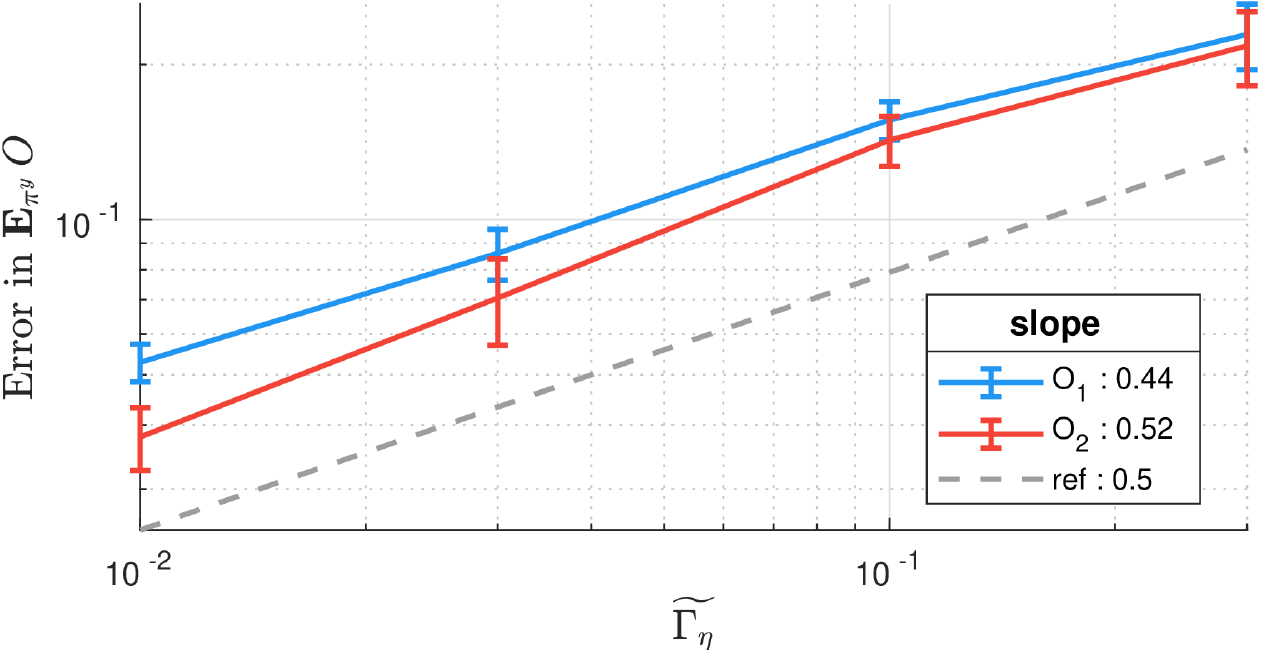}
\par\end{centering}
\caption{
	Log scale figure for test observation errors and noise covariance scales $\widetilde{\Gamma_{\eta}}$, indicating that assuming smaller noise covariances can lead to more consistent test observations sampled.
	Color folded line: averaged test observation errors obtained from numerical experiments. Grey dashed line: reference line with slope $\frac{1}{2}$. The fitted slopes for two folded lines are listed in the legend.
}
\label{fig:1-level stability-result test-err-gamma}
\end{figure}

\subsection{Numerical study on 2-Level system\label{sec:Numerical-Study-on-2-Level-System}}

\subsubsection{Problem formulation}

In the previous sections, we have build a rigorous theory on Bayesian
inversion in the quantum thermal average problem. It is tempting to
extend this theory into a broader area where the quantum system is associated 
with multiple electronic states and
 non-adiabatic effects are taken into account. Under a few technical
assumptions, the extension can be quite straightforward since the
essential procedures are kept the same: the potential function can
induce a probability measure on the bead configuration space, while
the thermal average can be compared to the ground truth of training
observations to induce a probability measure on the potential function
space. The prime difficulty lies in the forward problem of finding
an efficient way to sample the distribution on the bead configuration
space, which is concerned in \cite{Lu} and PIMD-SH is thus developed.
We refer to Sec. \ref{sec:RPR-in-Two-level-Systems} in the Appendix
for details. In the following sections we will basically transfer
the algorithm proposed for 1-level systems to 2-level systems and demonstrate
corresponding numerical studies.

\subsubsection{Notations}
The space of 2-level potential functions $\boldsymbol{V}$ is defined
as
\[
W^{\boldsymbol{2}}\triangleq\left\{ \boldsymbol{V}=\left(\begin{matrix}V_{00} & V_{01}\\
V_{01} & V_{11}
\end{matrix}\right):V_{00},V_{11}\in W^{\boldsymbol{1}},V_{01}\in W^{g}\right\} 
\]
and $W^{g}$ denotes the following mixed-Gaussian-component space
\[
W^{g}\triangleq\left\{ V_{01}:V_{01}\left(x\right)=\sum_{i=1}^{N_{\text{off}}}A_{i}\exp\left(-\frac{\left(x-c_{i}\right)^{2}}{2\sigma_{i}^{2}}\right)\right\} 
\]
where we have the following assumptions:
\begin{itemize}
	\item the off-diagonal potential component number $N_{\text{off}}$, location series $\left\{ c_{i}\right\} $ and derivation series $\left\{ \sigma_{i}\right\} $ are known and fixed,
	\item each amplitude component $A_{i}\in\mathbb{R}^{+}$ is unknown.
\end{itemize}

\subsubsection{Algorithm explanation}

There are two main differences compared from the 1-level algorithm
(Alg. \ref{alg:1-level algorithm}):
\begin{enumerate}
\item The proposal of $\boldsymbol{V}$: since we are now dealing with $\boldsymbol{V}\in W^{\boldsymbol{2}}$,
not only the diagonal terms $V_{00}$ and $V_{11}$ are sampled as
in the 1-level case, but also the off-diagonal term should be sampled.
The prior for $V_{01}$ (essentially for $\left\{ A_{i}\right\} $
since the other parameters are fixed) is chosen as the exponential
distribution, which can be efficiently proposed by Alg. \ref{alg:local-proposal-kernel exponential-distribution}.
\item The sampling of $\left(\boldsymbol{q},\boldsymbol{l}\right)$: it
is accomplished by the implementation of PIMD-SH. We refer to Sec.
\ref{sec:RPR-in-Two-level-Systems} in the Appendix for details.
\end{enumerate}

\subsubsection{Numerical study set-up}

Here we conduct tests for the following system:
\begin{align*}
V_{00} & =\frac{1}{2}x^{2}-\frac{3}{2}\phi_{1}\left(x\right)\\
V_{01} & =\exp\left(-2x^{2}\right)\\
V_{11} & =\frac{1}{2}x^{2}-\frac{3}{4}\phi_{0}\left(x\right)-\frac{3}{2}\phi_{1}\left(x\right)-\phi_{2}\left(x\right)
\end{align*}
where the off-diagonal potential has $N_{\text{off}}=1$ Gaussian component and $A_{1}=1$ (unknown and to be recovered), $\sigma_{1}=0.5,c_{1}=0$ (fixed and known beforehand). The two diagonal-potentials
almost intersect near $x=0$, where $V_{01}$ is significantly positive, providing chances for hopping between
layers. Besides, the particle mass $M$ is set to $10$, truncation
level $L$ to $4$ and bead number $N$ to $8$.

The observables we used in training/testing have either diagonal
or off-diagonal non-zero entries only.
For those only with diagonal entries, we simply set the two diagonal entry to be the same, while those only with off-diagonal entries have the same off-diagonal real-valued entries since the observables are Hermite.
The training observables have diagonal Gaussian entries in between
$\left[-2,2\right]$ and off-diagonal Gaussian entries in between $\left[-1,1\right]$.
The testing observables are
\[
\begin{array}{ccc}
O_1 = \left(\begin{array}{cc}
o_1 & \\ & o_1
\end{array}\right)&, o_1 = \exp\left(-\frac{\left(x-1.25\right)^{2}}{4}\right) \\ 
O_2 = \left(\begin{array}{cc}
o_2 & \\ & o_2
\end{array}\right)&, o_2 = \exp\left(-\frac{\left(x+0.25\right)^{2}}{4}\right) \\ 
O_3 = \left(\begin{array}{cc}
& o_3 \\ o_3
\end{array}\right)&, o_3 = \exp\left(-8\left(x-0.1\right)^{2}\right) \\ 
O_4 = \left(\begin{array}{cc}
& o_4 \\ o_4
\end{array}\right)&, o_4 = \exp\left(-8\left(x-0.3\right)^{2}\right)     
\end{array}
\]
as shown in Fig \ref{fig:2-level potential-observable-illustration}.

\begin{figure}[!htb]
\includegraphics[width=1\linewidth]{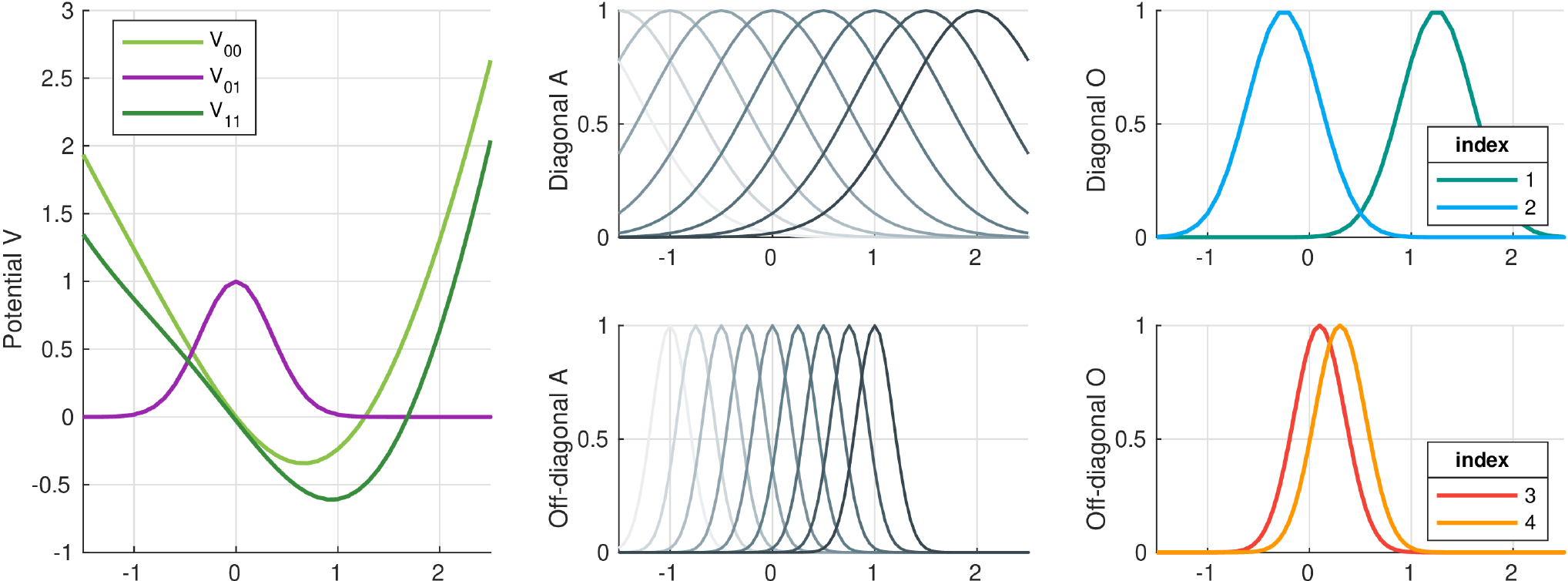}
\caption{Illustration on ground-truth potential (left-most) and training (middle)/testing (right-most)
observables in 2-level system. Top rows show observables with only diagonal entries and bottoms rows show observables with only off-diagonal entries.}

\label{fig:2-level potential-observable-illustration}
\end{figure}

\subsubsection{Numerical results and discussions}
Likewise, we demonstrate that the algorithm in the 2-level setting can recover potential landscape and predict test observables. As shown in  Fig. \ref{fig:2-level test-observation-sampled} and \ref{fig:2-level potential-sampled}, the sampled average will start from the initial value (marked by the brown line) and converge to the proposed average $\mathbb{E}_{\pi^{\boldsymbol{y}}}O\left(\mathfrak{q}\right)$, which is quite close to the ground truth (marked by the green line). The potential learned exhibits a tendency to be deeper near $x=1$, which captures the landscape of ground truth potentials.

\begin{figure}[!htb]
\begin{centering}
\includegraphics[width=0.75\linewidth]{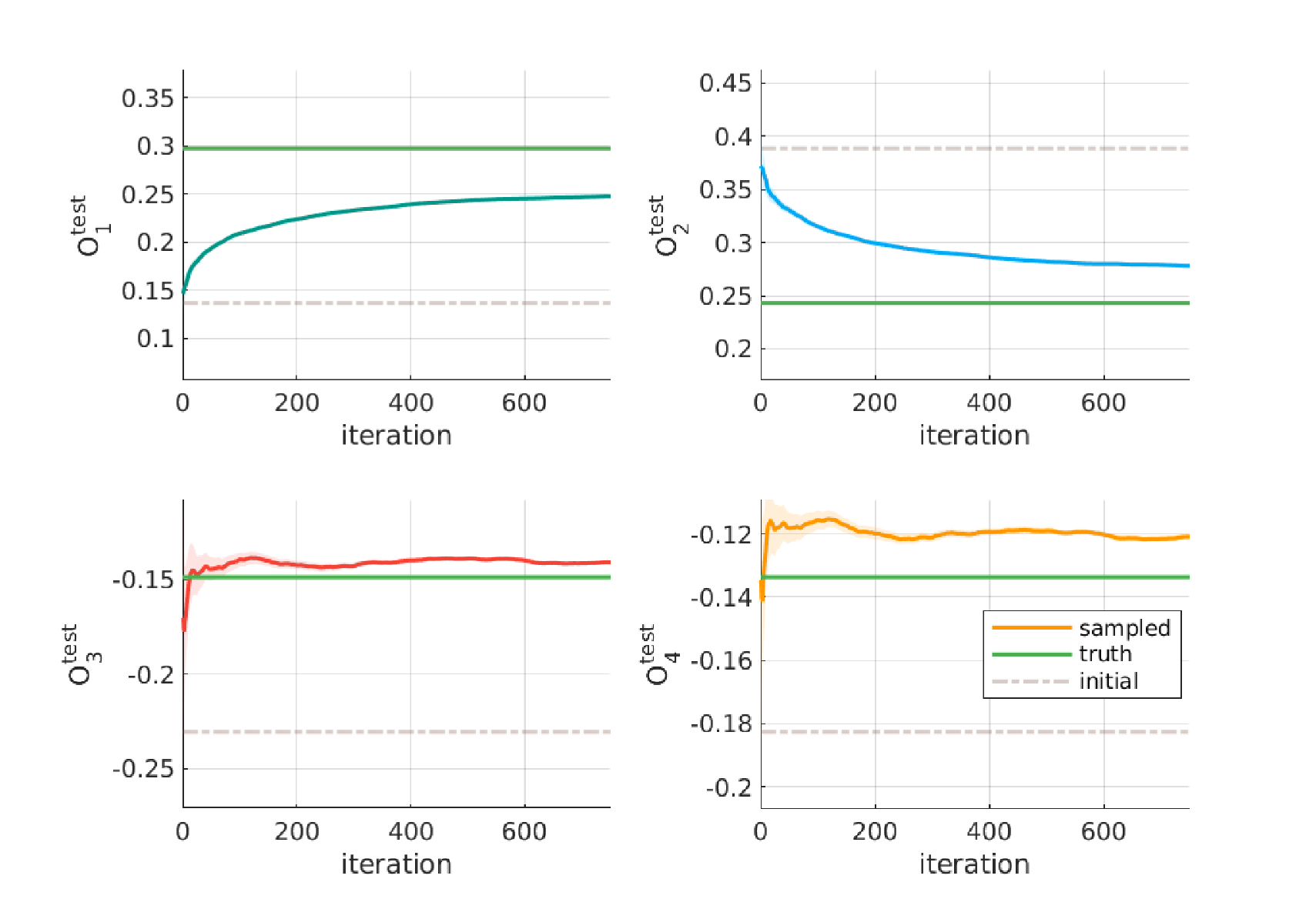}
\par\end{centering}
\caption{Observations for test observables averaged along the proposal-decision iterations. Top two figures: diagonal-only testing observables $\widehat{O_1}$ (teal), $\widehat{O_2}$ (light blue). Bottom two figures: off-diagonal-only testing observables $\widehat{O_3}$ (red), $\widehat{O_4}$ (orange). Green solid line: ground truth. Faded brown dashed line: a guess based on the initial potential function.}

\label{fig:2-level test-observation-sampled}
\end{figure}

\begin{figure}[!htb]
\includegraphics[width=1\linewidth]{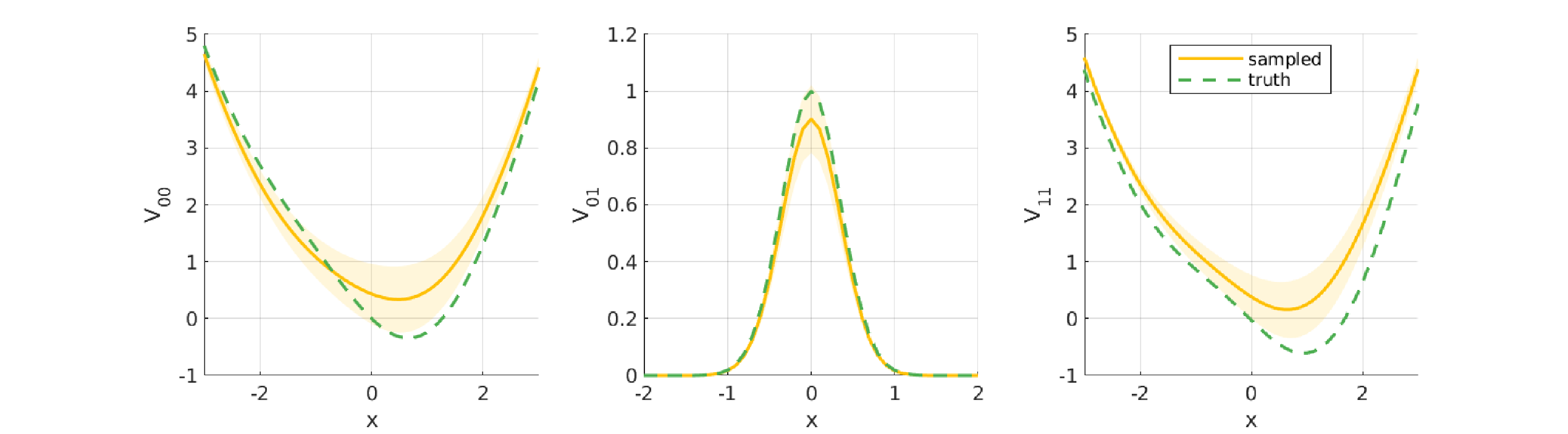}
\caption{Averaged sampled potentials and densities. The left and right figure
demonstrate the diagonal potential landscape on 2 levels, while the
middle one demonstrates the off-diagonal potential $V_{01}$. Blue
area indicates twice the standard error. Yellow dashed line indicates
the ground truth obtained by setting the potential $V$ to be $V_{truth}$.}

\label{fig:2-level potential-sampled}
\end{figure}

In a non-adiabatic system, it is quite reasonable to speculate that
training with off-diagonal-only observables (i.e. vanish along the diagonal line and only having off-diagonal terms) will help recover the potential landscape more accurately. Fig.
\ref{fig:2-level with-without-offdiagonal} shows the comparison between
results obtained by training the system with and without off-diagonal-only
observables. It is especially clear that the `with' condition outperforms
in the off-diagonal-only test observables in terms of convergence speed and bias.

\begin{figure}[!htb]
\begin{centering}
\includegraphics[width=0.8\linewidth]{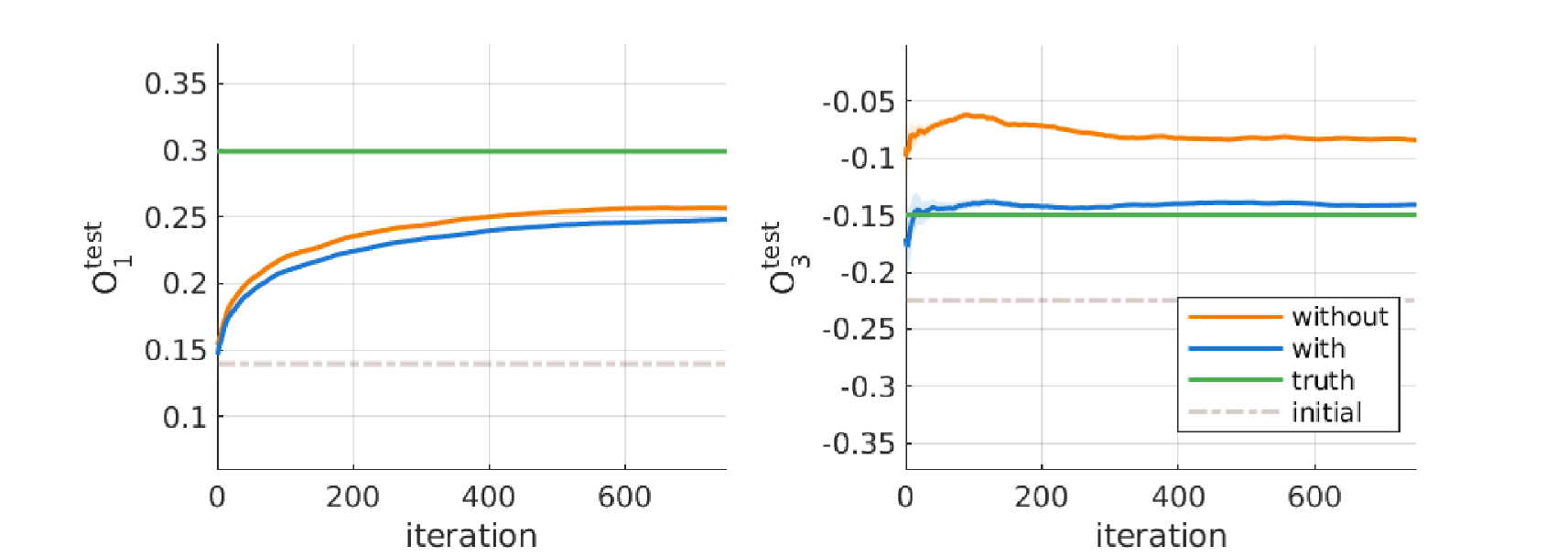}
\par\end{centering}

\caption{A comparison between training with (blue line) and without (deep orange line) off-diagonal-only observables. The ground truth is marked as the green solid line, and the initial result obtained at the beginning of iterations is marked as the faded brown dashed line. Results on the 2nd and 4th test observables are similar and are not shown in the figure.}

\label{fig:2-level with-without-offdiagonal}
\end{figure}

\section{Conclusion\label{sec:Conclusion}}

In this paper, we proposed a novel method regarding the inversion
problem of quantum thermal average. By adopting the Bayesian inversion
framework, we not only obtain a posterior distribution of potential
functions based on finite noisy observations, but also the stability
result is established under the framework and the numerical algorithm
is proved to be efficient. In the numerical section, we have shown that
the ensemble average of bounded testing observables can be accurately
sampled, and furthermore the error can be controlled by the assumed
perturbation in the input training data, which testifies our theoretical
result (Thm. \ref{thm:1-level d(=00005Cpi^y)<d(y) (main result)}). We also demonstrated how to apply the algorithm to the non-adiabatic regime. Since the tests in the 2-level setting are preliminary and the accuracy is not satisfactory, it indeed requires more attention and effort on the improvement.
One way is to adopt a more efficient sampling algorithm, e.g. the Delayed Rejection Adaptive Metropolis \cite{Haario2006} (with both techniques used together or separately). Another way is to dig deep into the intrinsic mathematical structure of this problem and further improve the precision.
In addition, it is not clear yet what observables are favorable in order to have a consistent approximation of the potential function.

Here we ought to mention that this work can easily extend to related problems.
First, our method can be extended to systems in higher dimensions. One can easily verify that Thm. \ref{thm:1-level d(=00005Cpi^y)<d(y) (main result)} still holds if the loop space $X$ is extended to $\mathcal{L}\mathbb{R}^{d}$ and potential space $W^{\boldsymbol{1}}$ is modified accordingly. As for the numerical implementation, we can expand the potential onto
tensor product of Hermite basis.
Second, judging from the numerical Sec.\ref{sec:Numerical-Study-on-2-Level-System}, our method fits multi-level systems quite well. A similar theoretical stability result can also be obtained if the Boltzmann distribution follows the form in the single-level system. Future investigations into a more general function space $W^g$ setting for the off-diagonal potential functions may yield better results.
At the same time, the framework proposed in this work is compatible with all kinds of solvers for the forward problem, so the methods developed in \cite{Liu2018,Lu2018a,Tao2018} can be seamlessly integrated into the algorithm. Besides, if given multiple training observations, we can also make use of the Ensemble Kalman Filter (EnKF) \cite{Mandel2009} to improve the accuracy.

\bibliographystyle{apalike}
\bibliography{ref_2}

\section*{ACKNOWLEDGMENTS}
Z. Zhou was supported by NSFC grant No. 11801016. Z. Zhou would like to thank Jianfeng Lu and Thomas F. Miller III for helpful discussions. Z. Chen would like to thank Tiejun Li for guidance in Chen's undergraduate study.

\appendix

\section{Analysis Supplements\label{sec:Analysis-Supplyments}}
\begin{lem}
	\label{lem:W1 complete}
	The vector space $W^{\boldsymbol{1}}\triangleq L^2\left(\mathbb{R}\right) \bigcap L^\infty\left(\mathbb{R}\right)$ is a Banach space under the norm
	\[
	\left|\left|f\right|\right|_{W^{\boldsymbol{1}}}\triangleq\left|\left|f\right|\right|_{L^2\left(\mathbb{R}\right)}+\left|\left|f\right|\right|_{L^{\infty}\left(\mathbb{R}\right)}.
	\]
\end{lem}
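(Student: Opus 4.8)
The plan is to establish first that $\|\cdot\|_{W^{\boldsymbol{1}}}$ is a genuine norm and then to verify completeness directly from the completeness of $L^2(\mathbb{R})$ and $L^\infty(\mathbb{R})$ individually. That $\|\cdot\|_{W^{\boldsymbol{1}}}$ is a norm is routine: homogeneity, the triangle inequality, and non-negativity are inherited term by term from the two summand norms, and definiteness holds because $\|f\|_{W^{\boldsymbol{1}}}=0$ forces $\|f\|_{L^2}=0$, whence $f=0$ almost everywhere. So the real content of the lemma is completeness.

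For completeness, I would take an arbitrary Cauchy sequence $\{f_n\}$ in $W^{\boldsymbol{1}}$. The two elementary estimates $\|f_n-f_m\|_{L^2}\le\|f_n-f_m\|_{W^{\boldsymbol{1}}}$ and $\|f_n-f_m\|_{L^\infty}\le\|f_n-f_m\|_{W^{\boldsymbol{1}}}$ show at once that $\{f_n\}$ is simultaneously Cauchy in $L^2(\mathbb{R})$ and in $L^\infty(\mathbb{R})$. Invoking the completeness of each of these classical spaces, I obtain limits $f\in L^2(\mathbb{R})$ with $f_n\to f$ in $L^2$, and $g\in L^\infty(\mathbb{R})$ with $f_n\to g$ in $L^\infty$.

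The crux of the argument --- and the only step requiring genuine care --- is to identify the two limits, i.e.\ to show $f=g$ almost everywhere, so that the common limit lies in $L^2(\mathbb{R})\cap L^\infty(\mathbb{R})=W^{\boldsymbol{1}}$. I would extract a subsequence $\{f_{n_k}\}$ converging to $f$ almost everywhere, which is available from $L^2$ convergence; since $L^\infty$ convergence also forces pointwise convergence almost everywhere (indeed uniform convergence off a null set) to $g$, this same subsequence converges to $g$ outside a null set. Hence $f=g$ almost everywhere, and the common function --- which I again call $f$ --- belongs to $W^{\boldsymbol{1}}$.

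Finally, writing $\|f_n-f\|_{W^{\boldsymbol{1}}}=\|f_n-f\|_{L^2}+\|f_n-f\|_{L^\infty}$ and noting that both summands tend to $0$ by construction, I conclude that $f_n\to f$ in $W^{\boldsymbol{1}}$. Thus every Cauchy sequence converges within the space, establishing completeness. No serious obstacle is expected: the lemma is essentially the standard fact that the intersection of two Banach spaces, normed by the sum of the two norms, is again a Banach space, with the only subtlety being the almost-everywhere identification of the two limiting functions described above.
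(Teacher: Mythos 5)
Your proof is correct, and it diverges from the paper's at the one step that carries actual content: identifying the $L^2$-limit with the $L^\infty$-limit. You do it via a.e.\ convergence: extract a subsequence converging a.e.\ to the $L^2$-limit $f$ (the standard Riesz--Fischer subsequence fact), note that $L^\infty$ convergence gives uniform convergence off a single null set (take the union of the null sets on which $|f_n-g|$ exceeds $\|f_n-g\|_{L^\infty}$) and hence a.e.\ convergence of the whole sequence to $g$, and conclude $f=g$ a.e.\ in one stroke on all of $\mathbb{R}$. The paper instead localizes: on a compact interval $[a,b]$ it uses the embedding $\|g\|_{L^2([a,b])}\le\sqrt{b-a}\,\|g\|_{L^\infty([a,b])}$ to upgrade the $L^\infty$ convergence to $L^2([a,b])$ convergence, invokes uniqueness of limits in $L^2([a,b])$ to get agreement a.e.\ on $[a,b]$, and then exhausts $\mathbb{R}$ by the countable family $[-N,N]$. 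Your route buys directness --- no localization, no exhaustion --- at the price of invoking the subsequence-a.e.-convergence theorem; the paper's route uses only the trivial finite-measure embedding and uniqueness of norm limits, at the price of an extra exhaustion step (necessary precisely because that embedding fails on $\mathbb{R}$, where $L^\infty\not\subset L^2$). Both are complete and standard; you also verify the norm axioms, which the paper silently omits. One cosmetic remark: strictly speaking $L^\infty$ convergence gives a.e.\ convergence of the \emph{full} sequence, so you only need the subsequence on the $L^2$ side, exactly as you wrote --- no circularity there.
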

\begin{proof}
	It suffices to show that a Cauchy sequence $\left\lbrace f_n \right\rbrace$ in $W$ converges in $W$. In fact, $\left\lbrace f_n \right\rbrace$ is a Cauchy sequence both in $L^{2}\left(\mathbb{R}\right)$ and in $L^{\infty}\left(\mathbb{R}\right)$. Denote the corresponding limit as $f_n \xrightarrow{L^2\left(\mathbb{R}\right)} h$ and $f_n \xrightarrow{L^\infty\left(\mathbb{R}\right)} \widetilde{h}$.
	
	Restricted on a given interval $\left[a,b\right] $, notice that $\left|\left|\cdot\right|\right|_{L^2\left( \left[ a,b \right]\right)} \le\left|\left|\cdot\right|\right|_{L^2\left(\mathbb{R}\right)}$ and $\left|\left|\cdot\right|\right|_{L^\infty\left( \left[ a,b \right]\right)} \le\left|\left|\cdot\right|\right|_{L^\infty\left(\mathbb{R}\right)}$, we have $h\in L^2\left( \left[ a,b \right] \right) $ and $\widetilde{h}\in L^\infty\left( \left[ a,b \right] \right) $.
	Notice that for $g_n\triangleq f_n-\widetilde{h}\in L^\infty\left( \left[ a,b \right]\right)$,
	\[
		\left|\left|g_n\right|\right|_{L^2\left( \left[ a,b \right]\right)}
		\le \sqrt{\left(b-a\right)\left|\left|g_n\right|\right|_{L^\infty\left( \left[ a,b \right]\right)}^2}
		\le \sqrt{b-a} \left|\left|g_n\right|\right|_{L^\infty\left( \left[ a,b \right]\right)},
	\]
	so $g_n$ converges to $0$ in $L^2\left( \left[ a,b \right] \right) $, i.e. $f_n \xrightarrow{L^2\left( \left[ a,b \right] \right)} \widetilde{h}$.
	But $f_n$ also converges to $h$ in $L^2\left( \left[ a,b \right] \right) $, so $h$ and $\widetilde{h}$ a.e. agree with each other on $\left[ a,b \right]$.
	
	Then, since $h$ and $\widehat{h}$ a.e. agree on $\left[ -N,N \right]$, they a.e. agree on $\mathbb{R}$, so $f_n$ converges to $h\in L^2\left(\mathbb{R}\right) \bigcap L^\infty\left(\mathbb{R}\right)$ under the $\left|\left|\cdot\right|\right|_{W^{\boldsymbol{1}}}$ norm.
\end{proof}

\begin{defn}
	\label{def:hermite-function}
The Hermite function sequence $\left\{ \phi_{n}\right\} $ is defined
as 
\[
\phi_{n}\left(x\right)\triangleq\left(-1\right)^{n}\cdot\left(2^{n}n!\sqrt{\pi}\right)^{-\frac{1}{2}}\cdot e^{\frac{x^{2}}{2}}\cdot\dfrac{d^{n}}{dx^{n}}e^{-x^{2}}.
\]
\end{defn}

\begin{thm}
\label{thm:corollary-7.22-continuity-extension}
Let $ D\subset \mathbb{R}^d $ be a compact domain, $\left\{ \gamma_{k}\right\} _{k\ge0}$
be a real-valued sequence and $\left\{ \xi_{k}\right\} _{k\ge0}$
be countably many centered i.i.d. random variables with bounded moments
of all orders. Assume that $\left\{ \psi_{k}\right\} _{k\ge0}$ is
a sequence of H\"{o}lder functions satisfying
\[
\left|\psi_{k}\left(x\right)\right|\le C_{1},\ \forall k\ge0,x\in D
\]
\[
\left|\psi_{k}\left(x\right)-\psi_{k}\left(y\right)\right|\le C_{2}k^{t}\left|x-y\right|^{\alpha},\ \forall k\ge0,x\in D
\]
where $C_{1},C_{2}$ are constants and $\alpha\in(0,1],t>0$. Suppose
there is some $\delta\in\left(0,2\right)$ such that 
\[
S_{1}\triangleq\sum_{k\ge0}\gamma_{k}^{2}<\infty\ \text{and}\ S_{2}\triangleq\sum_{k\ge0}\gamma_{k}^{2-\delta}k^{t\delta}<\infty.
\]
Then $u$ defined by 
\[
u\triangleq\sum_{k\ge0}\gamma_{k}\xi_{k}\psi_{k}
\]
is a.s. finite for every $x\in D$, and $ u\in C^{0,\alpha'}\left( D\right)$ for every $\alpha'<\alpha\delta/2$
(i.e. $u$ is H\"{o}lder continuous for every exponent smaller than $\alpha\delta/2$).
\end{thm}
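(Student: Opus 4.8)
The plan is to prove both assertions via the Kolmogorov--Chentsov continuity theorem, after reducing them to moment estimates on the random series. \textbf{Step 1 (pointwise finiteness).} I would fix $x\in D$ and set $b_k \triangleq \gamma_k\psi_k(x)$, so that $u(x)=\sum_k b_k\xi_k$ is a sum of independent, centered random variables. Since $|\psi_k(x)|\le C_1$, the variances satisfy $\sum_k \mathbb{E}\left(b_k\xi_k\right)^2 = \mathbb{E}\xi_0^2\sum_k b_k^2 \le C_1^2\,\mathbb{E}\xi_0^2\,S_1 <\infty$, so Kolmogorov's three-series theorem (equivalently, $L^2$-convergence of the $L^2$-bounded martingale of partial sums) shows $u(x)$ converges almost surely and is finite. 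This settles ``a.s.\ finite for every $x$''.

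\textbf{Step 2 (increment moments).} This is the heart of the argument, and where the hypotheses on $\{\gamma_k\}$ and the two bounds on $\psi_k$ enter. Fix $x,y\in D$ and put $a_k\triangleq\gamma_k\big(\psi_k(x)-\psi_k(y)\big)$, so $u(x)-u(y)=\sum_k a_k\xi_k$. I would first control the coefficients in $\ell^2$ by interpolating the two pointwise bounds with complementary exponents $2-\delta$ and $\delta$:
\[
|\psi_k(x)-\psi_k(y)|^2 \le (2C_1)^{2-\delta}\big(C_2 k^t|x-y|^\alpha\big)^{\delta} = (2C_1)^{2-\delta}C_2^{\delta}\,k^{t\delta}\,|x-y|^{\alpha\delta},
\]
which gives
\[
\sum_k a_k^2 \le (2C_1)^{2-\delta}C_2^{\delta}\,|x-y|^{\alpha\delta}\sum_k\gamma_k^2 k^{t\delta}.
\]
The remaining sum is finite: $S_1<\infty$ forces $|\gamma_k|\to 0$, hence $\gamma_k^2\le\gamma_k^{2-\delta}$ for all large $k$, so $\sum_k\gamma_k^2 k^{t\delta}\le C+S_2<\infty$. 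Thus $\sum_k a_k^2\le C_3\,|x-y|^{\alpha\delta}$ with $C_3$ independent of $x,y$. Since the $\xi_k$ are independent, centered, and have finite $2m$-th moment for every integer $m$, I would then apply a Rosenthal/Marcinkiewicz--Zygmund inequality together with $\sum_k|a_k|^{2m}\le(\sum_k a_k^2)^m$ to obtain
\[
\mathbb{E}\,|u(x)-u(y)|^{2m}\le C_m\Big(\sum_k a_k^2\Big)^m \le C_m C_3^{m}\,|x-y|^{m\alpha\delta}.
\]

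\textbf{Step 3 (continuity theorem and conclusion).} With this increment bound I would invoke the Kolmogorov--Chentsov theorem with $p=2m$ and exponent $d+\beta=m\alpha\delta$. Provided $m>d/(\alpha\delta)$, so that $\beta=m\alpha\delta-d>0$, the theorem furnishes a modification of $u$ that is almost surely Hölder continuous of every order below $\beta/p=\alpha\delta/2-d/(2m)$ on the compact set $D$. Letting $m\to\infty$ exhausts every exponent $\alpha'<\alpha\delta/2$, which is precisely the claimed regularity; this is the step that genuinely uses moments of all orders. I would finish by identifying the continuous version with the series $u$, noting that it agrees with the pointwise a.s.\ limit of Step 1 on a countable dense subset of $D$.

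\textbf{Main obstacle.} The interpolation and the moment inequality are routine; the delicate points are (i) checking that the stated hypotheses $S_1,S_2<\infty$ really deliver $\sum_k\gamma_k^2 k^{t\delta}<\infty$ --- the observation that $|\gamma_k|\to0$ (hence $\gamma_k^2\le\gamma_k^{2-\delta}$ eventually) is exactly what makes the stated form of $S_2$ sufficient --- and (ii) upgrading the Kolmogorov--Chentsov modification to a statement about the series $u$ itself, i.e.\ ensuring the a.s.\ pointwise limit and the continuous version coincide as random fields rather than merely at each fixed point.
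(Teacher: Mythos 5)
The paper offers no proof of this statement at all---it is quoted as a direct corollary of \cite{Dashti2017} (p.~91)---so the only meaningful comparison is with the cited source, and measured against that your argument is correct and is essentially the standard proof: the interpolation $\left|\psi_k(x)-\psi_k(y)\right|^2\le(2C_1)^{2-\delta}\left(C_2k^t\left|x-y\right|^\alpha\right)^\delta$, the observation that $S_1<\infty$ forces $\gamma_k\to0$ so that $\sum_k\gamma_k^2k^{t\delta}$ is controlled by $S_2$, a Rosenthal/Marcinkiewicz--Zygmund bound combined with $\sum_k|a_k|^{2m}\le\bigl(\sum_k a_k^2\bigr)^m$ to upgrade the $\ell^2$ estimate to moments of all orders, and Kolmogorov--Chentsov with $m\to\infty$ to exhaust every exponent $\alpha'<\alpha\delta/2$. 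The two caveats you flag are the right ones and both are benign: the moment inequality should formally be applied to partial sums with Fatou's lemma to pass to the infinite series, and the conclusion $u\in C^{0,\alpha'}(D)$ is (as in the cited reference) a statement about a modification, which agrees a.s.\ with the pointwise series on a countable dense set; if one insists on continuity of the series itself, a.s.\ uniform convergence of the partial sums (e.g.\ via It\^o--Nisio) closes that last gap.
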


This theorem is a direct corollary which can be found in \cite{Dashti2017},
Page 91.

\begin{prop}
	\label{prop:function-series-converge-in-L-infty}
	Assume the positive sequence $\gamma_{j}=\mathcal{O}\left(j^{-s}\right)$ with $s>1$
	and $\left\lbrace \xi_{j} \right\rbrace_{j=1}^\infty$ are i.i.d. random variables with finite second moment.
	Denote the Hermite functions as $\left\{ \phi_{n}\right\} $ defined in \ref{def:hermite-function}.
	Then the function series
	$\sum_{j=0}^{\infty}\xi_{j}\gamma_{j}\phi_{j}$ $\mathbb{P}$-a.s. converges in
	$W^{\boldsymbol{1}} = L^{\infty} \cap L^2$ with norm $\left|\left|\cdot\right|\right|_{W^{\boldsymbol{1}}} = \left|\left|\cdot\right|\right|_{L^2} + \left|\left|\cdot\right|\right|_{L^\infty}$.
\end{prop}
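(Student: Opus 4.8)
The plan is to exploit the two defining properties of the Hermite functions separately: the $L^2(\mathbb{R})$ convergence will follow from their orthonormality, while the $L^\infty(\mathbb{R})$ convergence will follow from their uniform boundedness. Since the $W^{\boldsymbol{1}}$ norm is $\|\cdot\|_{L^2}+\|\cdot\|_{L^\infty}$, it suffices to show that the partial sums $S_N \triangleq \sum_{j=0}^N \xi_j \gamma_j \phi_j$ form, on one common full-measure event, a Cauchy sequence in both $L^2(\mathbb{R})$ and $L^\infty(\mathbb{R})$; completeness of $W^{\boldsymbol{1}}$ (Lem.~\ref{lem:W1 complete}) then delivers the limit. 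I would use two standard facts about the Hermite functions of Def.~\ref{def:hermite-function}: they are orthonormal in $L^2(\mathbb{R})$, so $\|\phi_j\|_{L^2}=1$; and they are uniformly bounded, i.e. $\sup_j \|\phi_j\|_{L^\infty} \le C_H < \infty$ (the classical uniform bound for Hermite functions, Cramér's inequality; in fact $\|\phi_j\|_{L^\infty}=\mathcal{O}(j^{-1/12})$).

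For the $L^2$ part, orthonormality gives, for $M<N$,
\[
\|S_N-S_M\|_{L^2}^2 = \sum_{j=M+1}^N \gamma_j^2\, \xi_j^2 .
\]
I would then argue that the full series $\sum_{j\ge0}\gamma_j^2\xi_j^2$ is a.s. finite: its terms are non-negative, and since $\mathbb{E}\big[\sum_j \gamma_j^2\xi_j^2\big]=\mathbb{E}[\xi_0^2]\sum_j\gamma_j^2$ with $\sum_j\gamma_j^2=\mathcal{O}(\sum_j j^{-2s})<\infty$ (as $2s>2$), Tonelli's theorem shows the expectation is finite, hence the sum is finite a.s. Consequently its tails tend to $0$ a.s., so $(S_N)$ is a.s. Cauchy in $L^2(\mathbb{R})$.

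For the $L^\infty$ part I would instead bound directly
\[
\|S_N-S_M\|_{L^\infty} \le \sum_{j=M+1}^N |\xi_j|\,\gamma_j\,\|\phi_j\|_{L^\infty} \le C_H \sum_{j=M+1}^N |\xi_j|\,\gamma_j,
\]
and show that $\sum_{j\ge0}|\xi_j|\gamma_j<\infty$ a.s. Here the crucial gain is that $\gamma_j=\mathcal{O}(j^{-s})$ with $s>1$ makes $\sum_j\gamma_j$ itself summable; since a finite second moment forces $\mathbb{E}|\xi_0|\le(\mathbb{E}[\xi_0^2])^{1/2}<\infty$, Tonelli again gives $\mathbb{E}\big[\sum_j|\xi_j|\gamma_j\big]=\mathbb{E}|\xi_0|\sum_j\gamma_j<\infty$, so the sum is a.s. finite and its tails vanish. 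Thus $(S_N)$ is a.s. Cauchy, indeed absolutely convergent, in $L^\infty(\mathbb{R})$. Intersecting the two full-measure events and invoking completeness of $W^{\boldsymbol{1}}$ finishes the proof.

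The step I expect to require the most care is the uniform bound $\sup_j\|\phi_j\|_{L^\infty}<\infty$, as it is exactly what couples the decay rate $s>1$ of $\{\gamma_j\}$ to uniform convergence; without it one could not control the sup norm by the coefficients alone. Everything else is a routine summability argument at the level of first and second moments.
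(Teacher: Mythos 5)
Your proof is correct and follows essentially the same route as the paper's: a first-moment/Tonelli summability argument based on Cram\'er's bound $\left|\left|\phi_j\right|\right|_{L^\infty}\le\pi^{-\frac14}$ and $\left|\left|\phi_j\right|\right|_{L^2}=1$, concluded by the completeness of $W^{\boldsymbol{1}}$ (the paper packages the Tonelli step as a lemma on nonnegative independent series, but it only uses the easy direction, which is exactly your argument). The one minor variation is your $L^2$ treatment via orthonormality of the partial sums, which needs only $\sum_j\gamma_j^2<\infty$ (i.e. $s>\frac12$) where the paper uses absolute convergence $\sum_j\gamma_j\left|\xi_j\right|<\infty$ — a marginal sharpening that is immaterial here, since the $L^\infty$ part already forces $\sum_j\gamma_j<\infty$.
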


\begin{proof}
	We will use the following lemma without proof: let $\left\{ I_{j}\right\} _{j=0}^{\infty}$
	be a sequence of $\mathbb{R}^{+}$-valued independent random variables,
	then the following equivalence holds:
	\[
	\sum_{j=0}^{\infty}I_{j}<+\infty\ \mathbb{P}\text{-a.s.} \Longleftrightarrow\sum_{j=0}^{\infty}\mathbb{E}\left(I_{j}\land1\right)<\infty.
	\]
	
	For the $L^\infty$ part, we denote $I_{j}^{\left(1\right)}=\left|\left|\xi_{j}\gamma_{j}\phi_{j}\right|\right|_{L^{\infty}}$. 
	By Cram\'{e}r's inequality $\left|\left|\phi_{n}\right|\right|_{L^{\infty}}\le\pi^{-\frac{1}{4}}$,
	we have
	\[
	\sum_{j=0}^{\infty}\mathbb{E}I_{j}^{\left(1\right)}
	= \sum_{j=0}^{\infty}\gamma_{j}\left|\left|\phi_{j}\right|\right|_{L^{\infty}}\mathbb{E}\left|\xi_{j}\right|
	\le \pi^{-\frac{1}{4}}\mathbb{E}\left|\xi_{j}\right| \, \sum_{j=0}^{\infty}\gamma_{j}.
	\]
	
	For the $L^2$ part, we denote $I_{j}^{\left(2\right)}=\left|\left|\xi_j \gamma_j\phi_j\right|\right|_{L^2}$, then
	\[
	\sum_{j=0}^{\infty}\mathbb{E}I_{j}^{\left(2\right)}
	= \mathbb{E}\left|\xi_{j}\right|\,\sum_{j=0}^{\infty}\gamma_{j}
	\]
	Since we already assumed that $\gamma_{j}=\mathcal{O}\left(j^{-s}\right)$
	with $s>1$, the infinite sums $\sum_{j=0}^\infty \gamma_j$ and $\sum_{j=0}^\infty \gamma_j^2$ are finite, so
	\begin{align}
	\sum_{j=0}^{\infty}\mathbb{E}\left(\left|\left|\xi_{j}\gamma_{j}\phi_{j}\right|\right|_{W^{\boldsymbol{1}}}\land1\right)
	& \le \sum_{j=0}^{\infty}\mathbb{E}I_{j}^{\left(1\right)} + \sum_{j=0}^{\infty}\mathbb{E}I_{j}^{\left(2\right)} \\
	& \le \left(1+\pi^{-\frac{1}{4}}\right)\mathbb{E}\left|\xi_{j}\right|\,\sum_{j=0}^{\infty}\gamma_{j} \\
	&< \infty.
	\end{align}
	According to the lemma we stated at the beginning of the proof, the infinite sum 
	$\sum_{i=0}^{\infty}\left|\left|\xi_{j}\gamma_{j}\phi_{j}\right|\right|_{W^{\boldsymbol{1}}}$ $\mathbb{P}$-a.s. converges. By the completeness of $W^{\boldsymbol{1}}$, the function series $\sum_{i=0}^{\infty}\xi_{i}\gamma_{i}\phi_{i}$ $\mathbb{P}$-a.s. converges to a $W^{\boldsymbol{1}}$ function.
\end{proof}

\begin{prop}
	\label{prop:holder-continuity-of-function-series}
	Assume the positive sequence $\gamma_{j}=\mathcal{O}\left(j^{-s}\right)$ with $s>\frac{1}{2}$ and $\xi_{j}\stackrel{i.i.d.}{\sim}\mathcal{N}\left(0,1\right)$.
	Denote the Hermite functions defined in \ref{def:hermite-function} as $\left\{ \phi_{n}\right\} $.
	Then for all $\beta<\frac{4s-2}{4s+1}$, the function series $\sum_{j=0}^{\infty}\xi_{j}\gamma_{j}\phi_{j}$
	$\mathbb{P}$-a.s. converges in $C^{0,\beta}\left(\mathbb{R}\right)$.
\end{prop}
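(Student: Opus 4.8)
The plan is to obtain the statement as a direct application of Theorem \ref{thm:corollary-7.22-continuity-extension} with $\psi_k = \phi_k$, $D$ a compact interval, and the given sequences $\{\gamma_k\}$, $\{\xi_k\}$. Since $\xi_k \sim \mathcal{N}(0,1)$ are centered i.i.d. Gaussians, they have bounded moments of all orders, so the only genuine inputs are (i) a uniform sup bound $\|\phi_k\|_{L^\infty}\le C_1$ and (ii) a Hölder bound of the form $|\phi_k(x)-\phi_k(y)|\le C_2 k^t|x-y|^\alpha$ with the \emph{sharp} growth exponent $t$. Once these are in hand, the proof reduces to choosing a suitable $\delta\in(0,2)$, verifying the two summability conditions $S_1,S_2<\infty$, and reading off the Hölder exponent from $\alpha'<\alpha\delta/2$.

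For (i) I will invoke Cramér's inequality $\|\phi_k\|_{L^\infty}\le\pi^{-1/4}$ (already used in the proof of Prop. \ref{prop:function-series-converge-in-L-infty}), so $C_1=\pi^{-1/4}$. The crucial and most delicate input is (ii): I claim the sharp Lipschitz bound $\|\phi_k'\|_{L^\infty}\le Ck^{1/4}$, which gives $\alpha=1$ and $t=1/4$ via the mean value theorem. This is the main obstacle. The naive bound from the ladder relation $\phi_k'=\sqrt{k/2}\,\phi_{k-1}-\sqrt{(k+1)/2}\,\phi_{k+1}$ combined with Cramér's inequality only yields $\|\phi_k'\|_{L^\infty}\lesssim k^{1/2}$, which is too lossy: it would produce the strictly weaker exponent $\frac{2s-1}{2s+1}<\frac{4s-2}{4s+1}$. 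To recover the extra half power lost to oscillatory cancellation, I would use the energy identity for the Hermite ODE $\phi_k''+(2k+1-x^2)\phi_k=0$: the quantity $E(x)=\phi_k'(x)^2+(2k+1-x^2)\phi_k(x)^2$ satisfies $E'(x)=-2x\,\phi_k(x)^2$, so $E$ is maximized at $x=0$; together with the classical value estimates $|\phi_k(0)|\lesssim k^{-1/4}$ and $|\phi_k'(0)|\lesssim k^{1/4}$ this gives $E(0)\lesssim k^{1/2}$ and hence $\|\phi_k'\|_{L^\infty}\le\sqrt{E(0)}\lesssim k^{1/4}$ inside the classically allowed region $|x|\le\sqrt{2k+1}$, the tails beyond the turning points decaying and contributing nothing larger.

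With $\alpha=1$ and $t=1/4$ the bookkeeping is routine. Given $\beta<\frac{4s-2}{4s+1}$, I would pick $\delta$ in the nonempty interval $\bigl(2\beta,\ \tfrac{8s-4}{4s+1}\bigr)$, which lies in $(0,2)$ because $\frac{8s-4}{4s+1}<2$. Then $S_1=\sum_k\gamma_k^2\lesssim\sum_k k^{-2s}<\infty$ since $s>\tfrac12$, while $S_2=\sum_k\gamma_k^{2-\delta}k^{\delta/4}\lesssim\sum_k k^{-s(2-\delta)+\delta/4}<\infty$ precisely because $\delta<\frac{8s-4}{4s+1}$ drives the exponent below $-1$. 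Theorem \ref{thm:corollary-7.22-continuity-extension} then yields $u=\sum_k\gamma_k\xi_k\phi_k\in C^{0,\alpha'}$ for every $\alpha'<\alpha\delta/2=\delta/2$, and since $\delta/2>\beta$ this includes the target exponent $\beta$.

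One technical point remains: Theorem \ref{thm:corollary-7.22-continuity-extension} is stated on a compact domain, whereas the claim is global on $\mathbb{R}$. I would close this gap by observing that the increments feeding Kolmogorov's continuity criterion are location-independent: as $u$ is Gaussian, $\mathbb{E}\,|u(x)-u(y)|^2=\sum_k\gamma_k^2\bigl(\phi_k(x)-\phi_k(y)\bigr)^2$ depends on $x,y$ only through $|x-y|$ after bounding each term by $\min\{2\pi^{-1/4},\,Ck^{1/4}|x-y|\}$, uniformly over $\mathbb{R}$. Hence the Hölder modulus is controlled uniformly in location for pairs with $|x-y|<1$, while pairs with $|x-y|\ge 1$ are handled by the a.s.\ $L^\infty$-convergence from Prop. \ref{prop:function-series-converge-in-L-infty}. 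Combining the two ranges gives the a.s.\ convergence of the partial sums in the global norm $\|\cdot\|_{C^{0,\beta}(\mathbb{R})}$.
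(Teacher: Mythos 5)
Your proof follows the same skeleton as the paper's: both apply Thm.~\ref{thm:corollary-7.22-continuity-extension} on a compact interval with $C_1=\pi^{-1/4}$ (Cram\'er), $\alpha=1$, $t=\frac14$, and both perform the identical optimization over $\delta\in\bigl(0,\tfrac{8s-4}{4s+1}\bigr)$ to reach the exponent $\frac{4s-2}{4s+1}$. The one genuine difference is how the sharp Lipschitz input $\left|\left|\phi_k'\right|\right|_{L^\infty}\lesssim k^{1/4}$ is obtained. The paper asserts that $\arg\max\left|\phi_n'\right|=0$ for odd $n$, computes $\left|\phi_n'\left(0\right)\right|$ exactly, and reads off the growth via Stirling; the argmax claim itself is left unjustified. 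Your Sonin-type energy argument --- $E\left(x\right)=\phi_k'\left(x\right)^2+\left(2k+1-x^2\right)\phi_k\left(x\right)^2$ with $E'\left(x\right)=-2x\,\phi_k\left(x\right)^2$, so $E$ peaks at the origin --- is a more self-contained route to the same bound, and your observation that the ladder relation only yields the lossy $k^{1/2}$ (hence the weaker exponent $\frac{2s-1}{2s+1}$) correctly identifies why the sharp bound is essential. One small gloss: the inequality $\left|\phi_k'\left(x\right)\right|^2\le E\left(x\right)$ is only valid in the classically allowed region $\left|x\right|\le\sqrt{2k+1}$; beyond the turning points you should add the (easy) convexity argument --- there $\phi_k$ has fixed sign, $\phi_k''=\left(x^2-2k-1\right)\phi_k$ forces $\left|\phi_k'\right|$ to decrease from its turning-point value, which $E$ does control.

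The real gap is in your globalization step. You dispose of pairs with $\left|x-y\right|\ge1$ by invoking ``the a.s.\ $L^\infty$-convergence from Prop.~\ref{prop:function-series-converge-in-L-infty}'', but that proposition assumes $s>1$, while the present statement only assumes $s>\frac12$; for $s\in\left(\frac12,1\right]$ the first-moment argument behind it fails ($\sum_j\gamma_j$ diverges) and you have no license to cite it. A boundedness statement is in fact plausible here because the pointwise variance $\sigma^2\left(x\right)=\sum_k\gamma_k^2\phi_k\left(x\right)^2$ decays as $\left|x\right|\to\infty$ (only modes with $2k+1\gtrsim x^2$ contribute non-exponentially-small terms), but that requires proof; moreover, passing from uniform-in-location increment bounds to an almost-sure \emph{simultaneous} H\"older constant over the infinitely many unit intervals $\left[n,n+1\right]$ needs a Borel--Cantelli or Fernique-type tail argument, not just Kolmogorov's criterion applied interval by interval. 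Note that the paper sidesteps all of this: it proves H\"older continuity on each $D_R=\left[-R,R\right]$ and takes the countable union, i.e.\ it reads convergence in $C^{0,\beta}\left(\mathbb{R}\right)$ in the local sense of H\"older regularity on every compact set, for which your compact-domain argument already suffices. So either adopt that weaker (and intended) reading, or supply the missing decay-of-variance and Borel--Cantelli estimates if you want a genuinely global H\"older norm.
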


\begin{proof}
	First we fix the domain $D_R$ restricted to $\left[-R,R\right]$.
	By Thm. \ref{thm:corollary-7.22-continuity-extension}, the series
	of interest has proper regularity as long as $S_{1}$ and $S_{2}$ are
	finite. The key of this proof is to identify constants $t$ and $\alpha$
	satisfying
	\[
	\left|\phi_{n}\left(x\right)-\phi_{n}\left(y\right)\right|\le C_{2}n^{t}\left|x-y\right|^{\alpha}
	\]
	for any $x,y\in D_R$ and $n\in\mathbb{N}$.
	
	For Hermite basis functions, $\arg\max\left|\phi_{n}'\right|=0$ if
	$n$ is odd. If we take $p=\frac{n+1}{2}$, we can have the following
	equation after some tedious calculations:
	\[
	\left|\phi_{n}'\left(0\right)\right|=\frac{\sqrt{\left(2p-1\right)!}}{2^{p-1}\left(2\pi\right)^{\frac{1}{4}}\left(p-1\right)!}.
	\]
	By Stirling formula, 
	\[
	\log\left|\phi_{n}'\left(0\right)\right|=\frac{1}{4}\log\left(n+1\right)-\frac{1}{2}\log\pi-\frac{1}{8\left(n+1\right)}+\mathcal{O}\left(n^{-2}\right),
	\]
	which indicates that $t=\frac{1}{4}$.
	
	By assumption, $\gamma_{j}=\mathcal{O}\left(j^{-s}\right)$, we have
	\[
	S_{1}<\infty\Longleftrightarrow s>\frac{1}{2},
	\]
	\[
	S_{2}<\infty\Longleftrightarrow-\left(2-\delta\right)s+t\delta<-1\Longleftrightarrow\delta<\frac{2s-1}{s+t}=\frac{8s-4}{4s+1}.
	\]
	So by Thm. \ref{thm:corollary-7.22-continuity-extension}, the function series $\sum_{j=0}^{\infty}\xi_{j}\gamma_{j}\phi_{j}$ is H\"{o}lder continuous on $D_R$ for every exponent smaller than $\frac{4s-2}{4s+1}$.
	Since $\mathbb{R}=\bigcup_{R=1}^\infty D_R$ and the H\"{o}lder continuity of the function series $\mathbb{P}$-a.s. holds for countably many $R$, it also $\mathbb{P}$-a.s. holds on $\mathbb{R}$.
\end{proof}

\section{RPR in Two-level Systems\label{sec:RPR-in-Two-level-Systems}}

\subsection{Ring polymer representation}

Without loss of generality, a few assumptions have been made in \cite{Lu}:
\begin{enumerate}
\item The total level of energy in the system is 2.
\item The potential function $V=\left(\begin{array}{cc}
V_{00} & V_{01}\\
V_{01} & V_{11}
\end{array}\right)$ is Hermite and the off-diagonal term $V_{01}$ keeps its sign.
\item The observable $\widehat{A}$ only depends on position $q$.
\end{enumerate}
To fully describe the bead, a level index vector $\boldsymbol{l}\in\left\{ 0,1\right\} ^{N}$
is needed besides the position configuration $\boldsymbol{q}\in\mathbb{R}^{N}$.
By a similar induction and introduction of auxiliary momentum variable
$\boldsymbol{p}$ as in the single level system, we arrive at the
following formula for the thermal average
\begin{equation}
\langle\widehat{A}\rangle =\frac{1}{\mathcal{Z}_{N}}\int_{\mathbb{R}^{N}}\int_{\mathbb{R}^{N}}\sum_{\boldsymbol{l}\in\left\{ 0,1\right\} ^{N}}W_{N}\left[A\right]e^{-\beta_{N}H_{N}\left(\boldsymbol{q},\boldsymbol{p},\boldsymbol{l}\right)}d\boldsymbol{q}d\boldsymbol{p}+\mathcal{O}\left(N^{-2}\right),\label{eq:2-level-thermal-average}
\end{equation}
where the weight function is given by 
\begin{equation}
W_{N}\left[A\right]\left(\boldsymbol{q},\boldsymbol{p},\boldsymbol{l}\right)\triangleq\frac{1}{N}\sum_{k=1}^{N}\left\{ \left\langle l_{k}|A\left(q_{k}\right)|l_{k}\right\rangle -\boldsymbol{sgn}\left(V_{l_{k}\overline{l_{k}}}\right)\left\langle l_{k}|A\left(q_{k}\right)|\overline{l_{k}}\right\rangle \exp\left[\beta_{N}\left(\left\langle l_{k}|G_{k}|l_{k+1}\right\rangle -\left\langle \overline{l_{k}}|G_{k}|l_{k+1}\right\rangle \right)\right]\right\} ,\label{eq:2-level-thermal-average-weight}
\end{equation}
the Hamiltonian given by
\begin{equation}
H_{N}\left(\boldsymbol{q},\boldsymbol{p},\boldsymbol{l}\right)=\sum_{k=1}^{N}\left\langle l_{k}|G_{k}|l_{k+1}\right\rangle ,\label{eq:2-level-thermal-average-hamiltonian}
\end{equation}
the matrix $G_{k}$ given by 
\begin{equation}
\left\langle l|G_{k}|l'\right\rangle =\begin{cases}
\frac{p_{k}^{2}}{2M}+\frac{M\left(q_{k}-q_{k+1}\right)^{2}}{2\beta_{N}^{2}}+V_{ll}\left(q_{k}\right)-\frac{1}{\beta_{N}}\log\left(\cosh\left(\beta_{N}\left|V_{01}\left(q_{k}\right)\right|\right)\right) & l=l'\\
\frac{p_{k}^{2}}{2M}+\frac{M\left(q_{k}-q_{k+1}\right)^{2}}{2\beta_{N}^{2}}+\frac{V_{00}\left(q_{k}\right)+V_{11}\left(q_{k}\right)}{2}-\frac{1}{\beta_{N}}\log\left(\sinh\left(\beta_{N}\left|V_{01}\left(q_{k}\right)\right|\right)\right) & l\neq l'
\end{cases},\label{eq:appendix-rpr-g_k}
\end{equation}
shorthand notation $\overline{l_{i}}=1-l_{i}$ and $\mathcal{Z}_{N}\triangleq\int_{\mathbb{R}^{N}}\int_{\mathbb{R}^{N}}\sum_{\boldsymbol{l}\in\left\{ 0,1\right\} ^{N}}\exp\left[-\beta_{N}H_{N}\left(\boldsymbol{q},\boldsymbol{p},\boldsymbol{l}\right)\right]d\boldsymbol{q}d\boldsymbol{p}$
the normalization constant. 
We leave the detailed derivations and discussions to \cite{Lu}.

\subsection{PIMD-SH: An abstract}

The central idea of PIMD-SH is to construct a reversible Markov process
that the invariant distribution is exactly $\frac{1}{\mathcal{Z}_{N}}\exp\left(-\beta_{N}H_{N}\left(\boldsymbol{q},\boldsymbol{p},\boldsymbol{l}\right)\right)$.
To achieve this, the dynamics of the position and momentum variable
$\boldsymbol{z}\triangleq\left(\boldsymbol{q},\boldsymbol{p}\right)$
is similar to Eqn. \ref{eq:1-level-underdamped-langevin-equation}
as
\begin{equation}
\begin{cases}
d\boldsymbol{q} & =\nabla_{\boldsymbol{p}}H_{N}\left(\boldsymbol{q}\left(t\right),\boldsymbol{p}\left(t\right),\boldsymbol{l}\left(t\right)\right)dt\\
d\boldsymbol{p} & =-\nabla_{\boldsymbol{q}}H_{N}\left(\boldsymbol{q}\left(t\right),\boldsymbol{p}\left(t\right),\boldsymbol{l}\left(t\right)\right)dt-\gamma\boldsymbol{p}dt+\sqrt{\frac{2\gamma M}{\beta_{N}}}d\boldsymbol{B}
\end{cases}.\label{eq:2-level-underdamped-langevin-equation-q,p}
\end{equation}

The evolution of $\boldsymbol{l}\left(t\right)$ follows a surface
hopping type dynamics, essentially a Q-process: for given index configuration
$\boldsymbol{l}$, we restrict that the post-changing state can only
be $\overline{\boldsymbol{l}}$ or those $\boldsymbol{l}'$ which
has distance 1 to $\boldsymbol{l}$. The hopping intensity from $\boldsymbol{l}$
to other possible states is determined by the following detailed balance
condition
\[
p_{\boldsymbol{l}',\boldsymbol{l}}\left(\boldsymbol{z}\right)\exp\left[-\beta_{N}H_{N}\left(\boldsymbol{z},\boldsymbol{l}\right)\right]=p_{\boldsymbol{l},\boldsymbol{l}'}\left(\boldsymbol{z}\right)\exp\left[-\beta_{N}H_{N}\left(\boldsymbol{z},\boldsymbol{l}'\right)\right]
\]
up to an overall scaling parameter $\boldsymbol{\eta}$.

In \cite{Lu} the choice for $p_{\boldsymbol{l}',\boldsymbol{l}}\left(\boldsymbol{z}\right)$
was $\exp\left[\frac{\beta_{N}}{2}\left(H_{N}\left(\boldsymbol{z},\boldsymbol{l}\right)-H_{N}\left(\boldsymbol{z},\boldsymbol{l}'\right)\right)\right]$,
which is not upper-bounded, causing numerical difficulties if the
overall hopping probability exceeds 1. Hence, in this article we propose
the Hasting-style transition intensity $\min\left\{ 1,\exp\left[\frac{\beta_{N}}{2}\left(H_{N}\left(\boldsymbol{z},\boldsymbol{l}\right)-H_{N}\left(\boldsymbol{z},\boldsymbol{l}'\right)\right)\right]\right\} $.

Once the theory for dynamics is fully established, we can apply the
theorem of ergodicity
\[
\langle\widehat{A}\rangle =\lim_{T\to\infty}\frac{1}{T}\int_{0}^{T}W_{N}\left[A\right]\left(\widetilde{z}\left(t\right)\right)dt
\]
and calculate the thermal average by truncating the formula above
with a finite time span.

\section{Bayesian Inversion Supplements}

\subsection{Distance between measures}
\begin{defn}
\label{def:total-variation-distance}The total variation distance
between two measures $\mu$ and $\mu'$ is 
\[
d_{TV}\left(\mu,\mu'\right)\triangleq\frac{1}{2}\int\left|\frac{d\mu}{d\nu}-\frac{d\mu'}{d\nu}\right|d\nu.
\]
\end{defn}

\begin{defn}
\label{def:Hellinger-distance}The Hellinger distance between $\mu$
and $\mu'$ is 
\[
d_{Hell}\left(\mu,\mu'\right)\triangleq\sqrt{\frac{1}{2}\int\left(\sqrt{\frac{d\mu}{d\nu}}-\sqrt{\frac{d\mu'}{d\nu}}\right)^{2}d\nu}.
\]
\end{defn}

\begin{lem}
\label{lem:tv-hellinger-inequality}The total variation metric and
the Hellinger metric are related by the following inequalities
\[
\frac{1}{\sqrt{2}}d_{TV}\left(\mu,\mu'\right)\le d_{Hell}\left(\mu,\mu'\right)\le d_{TV}\left(\mu,\mu'\right)^{\frac{1}{2}}.
\]
\end{lem}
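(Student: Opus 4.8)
The plan is to reduce both measures to densities against a common dominating measure and then prove each of the two inequalities by a short algebraic estimate. First I would fix a reference measure $\nu$ dominating both $\mu$ and $\mu'$ (for instance $\nu=\mu+\mu'$), and write $p\triangleq\frac{d\mu}{d\nu}$ and $q\triangleq\frac{d\mu'}{d\nu}$, so that $p,q\ge 0$ and $\int p\,d\nu=\int q\,d\nu=1$. In this notation the two distances become $d_{TV}\left(\mu,\mu'\right)=\frac12\int|p-q|\,d\nu$ and $d_{Hell}\left(\mu,\mu'\right)^2=\frac12\int\left(\sqrt{p}-\sqrt{q}\right)^2\,d\nu$. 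One should note in passing that both quantities are independent of the choice of $\nu$, so fixing a single dominating measure is harmless.

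For the right-hand inequality $d_{Hell}\le d_{TV}^{1/2}$, the key observation is the pointwise bound $\left(\sqrt{p}-\sqrt{q}\right)^2\le\left|\sqrt{p}-\sqrt{q}\right|\left(\sqrt{p}+\sqrt{q}\right)=|p-q|$, which holds because $\sqrt{p}+\sqrt{q}\ge\left|\sqrt{p}-\sqrt{q}\right|$ whenever $p,q\ge 0$. Integrating and dividing by two gives $d_{Hell}^2\le d_{TV}$, which is exactly the claim after taking square roots.

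For the left-hand inequality $\frac{1}{\sqrt{2}}d_{TV}\le d_{Hell}$, I would start from the factorization $|p-q|=\left|\sqrt{p}-\sqrt{q}\right|\left(\sqrt{p}+\sqrt{q}\right)$ and apply the Cauchy--Schwarz inequality:
\[
\int|p-q|\,d\nu\le\left(\int\left(\sqrt{p}-\sqrt{q}\right)^2\,d\nu\right)^{1/2}\left(\int\left(\sqrt{p}+\sqrt{q}\right)^2\,d\nu\right)^{1/2}.
\]
The first factor equals $\sqrt{2}\,d_{Hell}$ by definition. For the second factor, expanding gives $\int\left(\sqrt{p}+\sqrt{q}\right)^2\,d\nu=2+2\int\sqrt{pq}\,d\nu$, and the arithmetic--geometric-mean bound $2\sqrt{pq}\le p+q$ together with $\int p\,d\nu=\int q\,d\nu=1$ yields $\int\sqrt{pq}\,d\nu\le 1$, so the second factor is at most $2$. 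Combining these, $d_{TV}=\frac12\int|p-q|\,d\nu\le\frac12\cdot\sqrt{2}\,d_{Hell}\cdot 2=\sqrt{2}\,d_{Hell}$, which rearranges to the desired bound.

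These are the only two steps, and there is no serious obstacle beyond keeping the algebra honest. The one place that deserves care is the Cauchy--Schwarz application in the left inequality together with the verification that $\int\left(\sqrt{p}+\sqrt{q}\right)^2\,d\nu\le 4$, since that estimate is precisely where the sharp constant $\frac{1}{\sqrt{2}}$ originates; everything else is elementary.
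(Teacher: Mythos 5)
Your proof is correct, and the constants come out right for the paper's normalization, in which both $d_{TV}$ and $d_{Hell}^2$ carry the factor $\frac{1}{2}$. The paper itself states this lemma without proof, importing it as a standard fact from the Bayesian inversion literature (cf.\ \cite{Dashti2017}), and your argument---the pointwise bound $\left(\sqrt{p}-\sqrt{q}\right)^2\le\left|p-q\right|$ for the upper inequality, and Cauchy--Schwarz combined with $\int\sqrt{pq}\,d\nu\le 1$ for the lower one---is precisely the canonical proof found in that reference, so there is nothing to add.
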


\subsection{Theorem on conditional variables}
\begin{thm}
\label{thm:bayesian-inversion-conditional-variable}Let $\left(\mathcal{X},A\right)$
and $\left(\mathcal{Y},B\right)$ denote a pair of measurable spaces
and let $\nu$ and $\pi$ be probability measures on $\mathcal{X}\times\mathcal{Y}$.
We assume that $\nu\ll\pi$. Thus there exists $\pi$-measurable $\phi:\mathcal{X}\times\mathcal{Y}\to\mathbb{R}$
with $\phi\in L_{\pi}^{1}$ and 
\[
\dfrac{d\nu}{d\pi}\left(x,y\right)=\phi\left(x,y\right).
\]
Assume that the conditional random variable $x|y$ exists under $\pi$
with probability distribution denoted by $\pi^{y}\left(dx\right)$.
Then the conditional random variable $x|y$ exists under $\nu$, with
probability distribution denoted by $\nu^{y}\left(dx\right)$. Furthermore,
$\nu^{y}\ll\pi^{y}$ and if $c\left(y\right)\triangleq\int_{X}\phi\left(x,y\right)d\pi^{y}\left(x\right)>0$,
then
\[
\dfrac{d\nu^{y}}{d\pi^{y}}\left(x\right)=\frac{1}{c\left(y\right)}\phi\left(x,y\right).
\]
\end{thm}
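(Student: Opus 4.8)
The plan is to leverage the disintegration of $\pi$ that is assumed to exist and build the disintegration of $\nu$ explicitly from it, rather than invoking an abstract existence theorem. Write $\pi_{\mathcal{Y}}$ for the marginal of $\pi$ on $\mathcal{Y}$, so that the hypothesis ``$x|y$ exists under $\pi$'' means precisely that $\pi$ disintegrates as $\pi(dx,dy)=\pi^{y}(dx)\,\pi_{\mathcal{Y}}(dy)$, with $y\mapsto\pi^{y}(E)$ measurable for every $E\in A$ and $y\mapsto\int g(x,y)\,\pi^{y}(dx)$ measurable for every bounded measurable $g$. The idea is that multiplying $\pi^{y}$ by the density $\phi(\cdot,y)$ and renormalizing should yield $\nu^{y}$; the bulk of the work is to verify that this candidate genuinely disintegrates $\nu$.

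First I would compute the $\mathcal{Y}$-marginal of $\nu$. For any $F\in B$, using $d\nu=\phi\,d\pi$, the disintegration of $\pi$, and Tonelli's theorem (legitimate since $\phi\ge0$ holds $\pi$-a.e., $\phi$ being the Radon--Nikodym derivative of the nonnegative measure $\nu$),
\[
\nu_{\mathcal{Y}}(F)=\nu(\mathcal{X}\times F)=\int_{\mathcal{X}\times F}\phi\,d\pi=\int_{F}\left[\int_{\mathcal{X}}\phi(x,y)\,\pi^{y}(dx)\right]\pi_{\mathcal{Y}}(dy)=\int_{F}c(y)\,\pi_{\mathcal{Y}}(dy),
\]
so $\nu_{\mathcal{Y}}\ll\pi_{\mathcal{Y}}$ with $d\nu_{\mathcal{Y}}/d\pi_{\mathcal{Y}}=c$. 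In particular $c\ge0$ holds $\pi_{\mathcal{Y}}$-a.e., the set $\{c=0\}$ is $\nu_{\mathcal{Y}}$-null, and the positivity condition $c(y)>0$ is valid for $\nu_{\mathcal{Y}}$-a.e.\ $y$; this is exactly the range of $y$ on which the candidate $\nu^{y}$ needs to be defined.

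Next I would set
\[
\nu^{y}(dx)\triangleq\frac{1}{c(y)}\,\phi(x,y)\,\pi^{y}(dx)\qquad\text{on }\{y:c(y)>0\},
\]
and check it is a legitimate family of probability measures: $\nu^{y}(\mathcal{X})=c(y)/c(y)=1$ by the definition of $c$, and $y\mapsto\nu^{y}(E)$ is measurable for each $E\in A$ because $\phi$ is $\pi$-measurable and partial integrals against a disintegration are measurable in $y$. The decisive step is to verify the disintegration identity $\nu(E\times F)=\int_{F}\nu^{y}(E)\,\nu_{\mathcal{Y}}(dy)$ for all rectangles: substituting the definitions together with $\nu_{\mathcal{Y}}(dy)=c(y)\,\pi_{\mathcal{Y}}(dy)$, the factor $c(y)$ cancels and the right-hand side collapses to $\int_{F}\left[\int_{E}\phi(x,y)\,\pi^{y}(dx)\right]\pi_{\mathcal{Y}}(dy)=\int_{E\times F}\phi\,d\pi=\nu(E\times F)$. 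Since the rectangles $E\times F$ with $E\in A$, $F\in B$ form a $\pi$-system generating the product $\sigma$-algebra, a monotone-class (Dynkin $\pi$--$\lambda$) argument extends the identity to all measurable sets, which identifies $\nu^{y}$ as the conditional distribution of $x|y$ under $\nu$. The absolute continuity $\nu^{y}\ll\pi^{y}$ and the stated formula for $d\nu^{y}/d\pi^{y}$ then read off directly from the definition of $\nu^{y}$.

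The main obstacle is not the algebra, which is a one-line cancellation, but the measure-theoretic bookkeeping: justifying the interchange of integrals via Tonelli against a disintegration (rather than a genuine product measure), confirming that $y\mapsto c(y)$ and $y\mapsto\nu^{y}(E)$ are measurable, and confining every statement to hold $\nu_{\mathcal{Y}}$-almost everywhere so that division by $c(y)$ is legitimate. Because the conditional law $\pi^{y}$ is assumed to exist rather than constructed, I can sidestep all regular-conditional-probability or Polish-space existence machinery: the formula above produces $\nu^{y}$ by hand, and the only residual existence question, the a.e.\ positivity of $c$, has already been settled.
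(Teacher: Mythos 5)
Your proof is correct. Note, however, that the paper itself does not prove this theorem: it is imported verbatim from \cite{Dashti2017} (p.~28), so there is no in-paper argument to compare against, and your write-up is essentially the canonical proof given in that reference. The one cosmetic difference is organizational: Dashti--Stuart verify the defining property of the conditional distribution by testing against measurable functions of $(x,y)$, whereas you verify the disintegration identity on measurable rectangles and extend by a Dynkin $\pi$--$\lambda$ argument; these are the same monotone-class step in different clothing. Your handling of the fine print is sound --- in particular, $d\nu_{\mathcal{Y}}/d\pi_{\mathcal{Y}}=c$ gives $c>0$ $\nu_{\mathcal{Y}}$-a.e., so division by $c(y)$ is legitimate exactly where it is needed, and the cancellation of $c(y)$ against $\nu_{\mathcal{Y}}(dy)=c(y)\,\pi_{\mathcal{Y}}(dy)$ is the whole of the algebra. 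Two residual crumbs you could make explicit: first, $c<\infty$ $\pi_{\mathcal{Y}}$-a.e., which is immediate from $\phi\in L^{1}_{\pi}$ since $\int c\,d\pi_{\mathcal{Y}}=\nu\left(\mathcal{X}\times\mathcal{Y}\right)=1$; second, on the $\nu_{\mathcal{Y}}$-null set $\left\{ c=0\right\} $ you should fix $\nu^{y}$ arbitrarily (say $\nu^{y}=\pi^{y}$) so that the family is defined for every $y$ --- neither point affects the disintegration identity. Finally, calling the interchange of integrals ``Tonelli'' is a mild abuse, since $\pi$ is not a product measure and the identity in question is the disintegration formula for nonnegative integrands (proved by monotone-class extension from indicators); you flag exactly this caveat yourself, so it is a matter of labeling rather than a gap.
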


The theorem can be found in \cite{Dashti2017}, Page 28.

\subsection{Well-posedness result}
\begin{notation*}
In this section, let $\mathcal{X}$ and $\mathcal{Y}$ be two separable
Banach spaces, and $\mu_{0}$ a measure on $\mathcal{X}$. Given $y\in\mathcal{Y}$,
$\mu^{y}$ is defined as the measure absolutely continuous to $\mu_{0}$
by
\begin{align*}
\dfrac{d\mu^{y}}{d\mu_{0}}\left(x\right) & =\frac{1}{Z\left(y\right)}\exp\left(-\Phi\left(x;y\right)\right)\\
Z\left(y\right) & =\int_{\mathcal{X}}\exp\left(-\Phi\left(x;y\right)\right)\mu_{0}\left(dx\right).
\end{align*}
\end{notation*}
\begin{assumption}
\label{assu:phi-regularity}Let $\mathcal{X}'$ is a separable subspace
of $\mathcal{X}$ and assume that $\Phi\in C\left(\mathcal{X}'\times\mathcal{Y};\mathbb{R}\right)$.
Assume further that there are functions $M_{i}:\mathbb{R}^{+}\times\mathbb{R}^{+}\to\mathbb{R}^{+},i=1,2$,
monotonic non-decreasing separately in each argument, and with $M_{2}$
strictly positive, such that for all $u\in\mathcal{X}',y,y_{1},y_{2}\in B_{\mathcal{Y}}\left(0,r\right),$
\[
\Phi\left(u;y\right)\geq-M_{1}\left(r,\left|\left|u\right|\right|_{\mathcal{X}}\right),
\]
\[
\left|\Phi\left(u;y_{1}\right)-\Phi\left(u;y_{2}\right)\right|\leq M_{2}\left(r,\left|\left|u\right|\right|_{\mathcal{X}}\right)\left|\left|y_{1}-y_{2}\right|\right|_{\mathcal{Y}}.
\]
\end{assumption}

\begin{thm}
\label{thm:well-posedness}Assume that Assu. \ref{assu:phi-regularity}
holds. Assume that the prior measure is a.s. restricted on the separable
subspace $\mathcal{X}'$ (i.e. $\mu_{0}\left(\mathcal{X}'\right)=1$)
and that $\mu_{0}\left(\mathcal{X}'\cap S\right)>0$ for some bounded
set $S$ in $\mathcal{X}$. Assume additionally that, for every fixed
$r>0$,
\[
\exp\left(M_{1}\left(r,\left|\left|u\right|\right|_{\mathcal{X}}\right)\right)\left(1+M_{2}\left(r,\left|\left|u\right|\right|_{\mathcal{X}}\right)^{2}\right)\in L_{\mu_{0}}^{1}\left(\mathcal{X};\mathbb{R}\right).
\]
Then we have the following asymptotic inequality
\[
d_{Hell}\left(\mu^{y},\mu^{y'}\right)\apprle_{r}\left|\left|y-y'\right|\right|_{\mathcal{Y}}.
\]
\end{thm}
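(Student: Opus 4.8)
The plan is to follow the classical posterior-stability argument (as in \cite{Stuart2010} and \cite{Dashti2017}): bound the two normalization constants $Z(y)$ and $Z(y')$ from above and below uniformly for $y,y'\in B_{\mathcal{Y}}(0,r)$, then expand the squared Hellinger distance as a $\mu_0$-integral, split it into a piece measuring the closeness of the two unnormalized densities and a piece measuring $|Z(y)-Z(y')|$, and control both by $\|y-y'\|_{\mathcal{Y}}$ via the mean value theorem applied to the exponential, the Lipschitz-in-$y$ estimate of Assu. \ref{assu:phi-regularity}, and the dominating integrability hypothesis.

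First I would record the bounds on $Z$. The upper bound is immediate: from $\Phi(u;y)\ge -M_1(r,\|u\|_{\mathcal{X}})$ we get $e^{-\Phi(u;y)}\le e^{M_1(r,\|u\|_{\mathcal{X}})}$, so $Z(y)\le\int e^{M_1(r,\|u\|_{\mathcal{X}})}\mu_0(du)<\infty$ by the integrability assumption (which in particular forces $e^{M_1}\in L^1_{\mu_0}$), uniformly in $y\in B_{\mathcal{Y}}(0,r)$. The lower bound is the delicate point. Let $S$ be the bounded set with $\mu_0(\mathcal{X}'\cap S)>0$ and put $R\triangleq\sup_{u\in S}\|u\|_{\mathcal{X}}$. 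Since $\Phi(\cdot;0)$ is real-valued, the sets $B_k\triangleq(\mathcal{X}'\cap S)\cap\{u:\Phi(u;0)\le k\}$ increase to $\mathcal{X}'\cap S$, so $\mu_0(B_k)\to\mu_0(\mathcal{X}'\cap S)>0$; fix $k$ large enough that $\mu_0(B_k)>0$. For $u\in B_k$ and $y\in B_{\mathcal{Y}}(0,r)$, the Lipschitz bound and monotonicity of $M_2$ give $\Phi(u;y)\le\Phi(u;0)+M_2(r,R)\|y\|_{\mathcal{Y}}\le k+rM_2(r,R)$, whence $Z(y)\ge e^{-(k+rM_2(r,R))}\mu_0(B_k)\triangleq c(r)>0$ uniformly for $y\in B_{\mathcal{Y}}(0,r)$.

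With $0<c(r)\le Z(y),Z(y')\le C(r)$ in hand, I would expand $2\,d_{Hell}(\mu^y,\mu^{y'})^2=\int\big(Z(y)^{-1/2}e^{-\Phi(u;y)/2}-Z(y')^{-1/2}e^{-\Phi(u;y')/2}\big)^2\mu_0(du)$ and, via $(a+b)^2\le 2a^2+2b^2$, dominate it by $I_1+I_2$, where $I_1=\tfrac{2}{Z(y)}\int\big(e^{-\Phi(u;y)/2}-e^{-\Phi(u;y')/2}\big)^2\mu_0(du)$ and $I_2=2\,|Z(y)^{-1/2}-Z(y')^{-1/2}|^2\,Z(y')$. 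For $I_1$, the mean value theorem yields $|e^{-a/2}-e^{-b/2}|\le\tfrac12\max(e^{-a/2},e^{-b/2})|a-b|$; combined with $e^{-\Phi/2}\le e^{M_1/2}$ and $|\Phi(u;y)-\Phi(u;y')|\le M_2(r,\|u\|_{\mathcal{X}})\|y-y'\|_{\mathcal{Y}}$, the integrand is at most $\tfrac14 e^{M_1(r,\|u\|_{\mathcal{X}})}M_2(r,\|u\|_{\mathcal{X}})^2\|y-y'\|_{\mathcal{Y}}^2$, so $I_1\lesssim_r\|y-y'\|_{\mathcal{Y}}^2$ using $e^{M_1}(1+M_2^2)\in L^1_{\mu_0}$ and $Z(y)\ge c(r)$. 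For $I_2$, the same estimate applied to $e^{-a}$ gives $|Z(y)-Z(y')|\le\int e^{M_1(r,\|u\|_{\mathcal{X}})}M_2(r,\|u\|_{\mathcal{X}})\mu_0(du)\,\|y-y'\|_{\mathcal{Y}}\lesssim_r\|y-y'\|_{\mathcal{Y}}$ (since $M_2\le\tfrac12(1+M_2^2)$), and then $|Z(y)^{-1/2}-Z(y')^{-1/2}|\lesssim_r|Z(y)-Z(y')|$ by the lower bound on $Z$, so $I_2\lesssim_r\|y-y'\|_{\mathcal{Y}}^2$. Taking square roots gives $d_{Hell}(\mu^y,\mu^{y'})\apprle_r\|y-y'\|_{\mathcal{Y}}$.

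The step I expect to be the main obstacle is the uniform-in-$y$ lower bound on the normalization constant $Z$: the rest is a routine application of the mean value theorem together with the dominating integrability hypothesis, whereas the lower bound genuinely requires exploiting the finiteness of $\Phi(\cdot;0)$ and the hypothesis $\mu_0(\mathcal{X}'\cap S)>0$ to transfer an upper bound on $\Phi$ (obtained through the Lipschitz-in-$y$ estimate on the bounded set $S$) into a strictly positive lower bound for $Z$ that does not degenerate as $y$ ranges over $B_{\mathcal{Y}}(0,r)$. Some additional care is needed to check that every $\mu_0$-integral appearing above is finite and that the constants produced are monotone in $r$, so that the final $\apprle_r$ bound is genuinely uniform on balls.
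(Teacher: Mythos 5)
Your proof is correct, and it is essentially the argument the paper relies on: the paper gives no in-text proof but cites Dashti--Stuart (2017, pp.~38--39), whose proof uses exactly your decomposition of the squared Hellinger distance into the $I_1$ (unnormalized densities) and $I_2$ (normalization constants) terms, the mean value theorem with the bounds $M_1,M_2$, and the dominating integrability hypothesis. Your uniform lower bound on $Z(y)$ via the truncation sets $B_k=(\mathcal{X}'\cap S)\cap\{\Phi(\cdot;0)\le k\}$ together with the Lipschitz-in-$y$ estimate is the same device used in the cited reference, so nothing further is needed.
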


The proof can be found in \cite{Dashti2017}, Page 38-39. Thm. \ref{thm:well-posedness}
ensures that the distance between induced posterior probability measures
is bounded by the distance between different dependent variables.
\begin{lem}
\label{lem:continuity-linfty-observable}Given a function $f\in L_{\mu^{y}}^{\infty}\left(X;\mathbb{R}\right)\cap L_{\mu^{y'}}^{\infty}\left(X;\mathbb{R}\right)$,
we have 
\[
\left|\mathbb{E}^{\mu^{y}}f\left(u\right)-\mathbb{E}^{\mu^{y'}}f\left(u\right)\right|\le C\left(f\right)d_{TV}\left(\mu^{y},\mu^{y'}\right),
\]
which leads to 
\[
\left|\mathbb{E}^{\mu^{y}}f\left(u\right)-\mathbb{E}^{\mu^{y'}}f\left(u\right)\right|\apprle_{r}\left|\left|y-y'\right|\right|_{\mathcal{Y}}
\]
for fixed $f$.
\end{lem}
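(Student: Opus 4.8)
The plan is to establish the first (total-variation) inequality directly from the definition of the expectations, and then to obtain the second inequality by chaining it with the Hellinger--TV comparison (Lem.~\ref{lem:tv-hellinger-inequality}) and the well-posedness estimate (Thm.~\ref{thm:well-posedness}).

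For the first step I would introduce a common reference measure $\nu$ that dominates both posteriors; a convenient choice is $\nu\triangleq\mu^{y}+\mu^{y'}$, although the prior $\mu_{0}$ serves equally well since both $\mu^{y}$ and $\mu^{y'}$ are absolutely continuous with respect to it. Writing the two expectations against $\nu$, the difference becomes
\[
\mathbb{E}^{\mu^{y}}f\left(u\right)-\mathbb{E}^{\mu^{y'}}f\left(u\right)=\int_{\mathcal{X}}f\left(u\right)\left(\dfrac{d\mu^{y}}{d\nu}\left(u\right)-\dfrac{d\mu^{y'}}{d\nu}\left(u\right)\right)\nu\left(du\right).
\]
Bounding $\left|f\right|$ by its essential supremum and pulling it out of the integral gives
\[
\left|\mathbb{E}^{\mu^{y}}f\left(u\right)-\mathbb{E}^{\mu^{y'}}f\left(u\right)\right|\le\left|\left|f\right|\right|_{L_{\nu}^{\infty}}\int_{\mathcal{X}}\left|\dfrac{d\mu^{y}}{d\nu}-\dfrac{d\mu^{y'}}{d\nu}\right|\nu\left(du\right)=2\left|\left|f\right|\right|_{L_{\nu}^{\infty}}d_{TV}\left(\mu^{y},\mu^{y'}\right),
\]
where the factor $2$ arises from the $\tfrac12$-normalization in Def.~\ref{def:total-variation-distance}. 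Setting $C\left(f\right)\triangleq2\left|\left|f\right|\right|_{L_{\nu}^{\infty}}$ yields the first inequality. The only point requiring care is that this essential supremum is finite and in fact equals $\max\left\{\left|\left|f\right|\right|_{L_{\mu^{y}}^{\infty}},\left|\left|f\right|\right|_{L_{\mu^{y'}}^{\infty}}\right\}$, which follows because a set is $\nu$-null exactly when it is both $\mu^{y}$-null and $\mu^{y'}$-null; the hypothesis $f\in L_{\mu^{y}}^{\infty}\cap L_{\mu^{y'}}^{\infty}$ is precisely what guarantees finiteness.

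For the second step, since $f$ is fixed, $C\left(f\right)$ is a constant. By Lem.~\ref{lem:tv-hellinger-inequality} we have $d_{TV}\left(\mu^{y},\mu^{y'}\right)\le\sqrt{2}\,d_{Hell}\left(\mu^{y},\mu^{y'}\right)$, and Thm.~\ref{thm:well-posedness} gives $d_{Hell}\left(\mu^{y},\mu^{y'}\right)\apprle_{r}\left|\left|y-y'\right|\right|_{\mathcal{Y}}$. Chaining the three estimates,
\[
\left|\mathbb{E}^{\mu^{y}}f\left(u\right)-\mathbb{E}^{\mu^{y'}}f\left(u\right)\right|\le\sqrt{2}\,C\left(f\right)\,d_{Hell}\left(\mu^{y},\mu^{y'}\right)\apprle_{r}\left|\left|y-y'\right|\right|_{\mathcal{Y}},
\]
with the constant $\sqrt{2}\,C\left(f\right)$ absorbed into the $\apprle_{r}$ notation. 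The argument is merely a triangle of standard inequalities, so there is no deep obstacle; the only thing demanding attention is the measure-theoretic bookkeeping in the first step, namely the correct choice of the dominating measure $\nu$, the verification that $f$ admits a single finite essential bound against it, and keeping track of the factor $2$ coming from the definition of $d_{TV}$.
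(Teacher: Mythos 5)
Your proposal is correct and takes essentially the same route as the paper's own proof: the paper also introduces the dominating measure $\nu=\frac{1}{2}\left(\mu^{y}+\mu^{y'}\right)$, pulls out $\left|\left|f\right|\right|_{L_{\nu}^{\infty}}=\max\left(\left|\left|f\right|\right|_{L_{\mu^{y}}^{\infty}},\left|\left|f\right|\right|_{L_{\mu^{y'}}^{\infty}}\right)$, and then chains Lem.~\ref{lem:tv-hellinger-inequality} with Thm.~\ref{thm:well-posedness}. If anything, you are slightly more careful than the paper's display about the factor $2$ arising from the $\tfrac{1}{2}$-normalization in Def.~\ref{def:total-variation-distance}, which the paper silently absorbs into $C\left(f\right)$.
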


\begin{proof}
By the definition of $d_{TV}$, there exists a measure $\nu$ s.t.
both $\mu^{y}$ and $\mu^{y'}$ are absolutely continuous w.r.t. $\nu$;
for example, take $\nu=\frac{1}{2}\left(\mu^{y}+\mu^{y'}\right)$.
Then a direct calculation shows that
\begin{align*}
\left|\mathbb{E}^{\mu^{y}}f\left(u\right)-\mathbb{E}^{\mu^{y'}}f\left(u\right)\right| & =\left|\int_{X}f\ d\mu^{y}\left(u\right)-\int_{X}f\ d\mu^{y'}\left(u\right)\right|\\
 & =\left|\int_{X}f\dfrac{d\mu^{y}}{d\nu}\thinspace d\nu\left(u\right)-\int_{X}f\dfrac{d\mu^{y'}}{d\nu}\thinspace d\nu\left(u\right)\right|\\
 & =\left|\int_{X}f\left(\dfrac{d\mu^{y}}{d\nu}-\dfrac{d\mu^{y'}}{d\nu}\right)\thinspace d\nu\left(u\right)\right|\\
 & \le\left|\left|f\right|\right|_{L_{\nu}^{\infty}}\cdot\int_{X}\left|\dfrac{d\mu^{y}}{d\nu}-\dfrac{d\mu^{y'}}{d\nu}\right|d\nu\left(u\right)\\
 & =\max\left(\left|\left|f\right|\right|_{L_{\mu^{y}}^{\infty}},\left|\left|f\right|\right|_{L_{\mu^{y'}}^{\infty}}\right)\cdot d_{TV}\left(\mu^{y},\mu^{y'}\right).
\end{align*}

The second conclusion is a direct corollary from Thm. \ref{thm:well-posedness}
($d_{Hell}\left(\mu^{y},\mu^{y'}\right)\apprle_{r}\left|\left|y-y'\right|\right|_{\mathcal{Y}}$)
and Lem. \ref{lem:tv-hellinger-inequality} ($d_{TV}\left(\mu^{y},\mu^{y'}\right)\le\sqrt{2}d_{Hell}\left(\mu^{y},\mu^{y'}\right)$).
\end{proof}

\subsection{Measure preserving dynamics\label{subsec:Measure-preserving-dynamics}}

The following algorithm from \cite{Dashti2017} is proposed to sample
distributions in the form of $\mu\left(du\right)=\mu_{0}\left(du\right)\frac{1}{Z}\exp\left(-\Phi\right)$:

\begin{algorithm}
\begin{algor}[1]
\item [{{*}}] Given the choice function $a:X\times X\to\left[0,1\right]$
and proposal kernel $Q\left(u,dv\right)$.
\item [{{*}}] Initialize $k=0$ and first term $u^{\left(0\right)}\in X$
in the sampling sequence.
\item [{while}] stopping criterion is not satisfied
\begin{algor}[1]
\item [{{*}}] Propose $v^{\left(k\right)}$ based on $Q\left(u^{\left(k\right)},dv\right)$.
\item [{{*}}] Pick a random number $r^{\left(k\right)}\sim\mathcal{U}\left[0,1\right]$,
independently of $\left(u^{\left(k\right)},v^{\left(k\right)}\right)$.
\item [{if}] $r^{\left(k\right)}<a\left(u^{\left(k\right)},v^{\left(k\right)}\right)$
\begin{algor}[1]
\item [{{*}}] Accept the proposal: $u^{\left(k+1\right)}=v^{\left(k\right)}$.
\end{algor}
\item [{else}]~
\begin{algor}[1]
\item [{{*}}] Reject the proposal: $u^{\left(k+1\right)}=u^{\left(k\right)}$.
\end{algor}
\item [{endif}]~
\end{algor}
\item [{endwhile}]~
\end{algor}
\caption{Metropolis-Hasting Algorithm Adapted to Infinite-dimensional Systems}

\label{alg:Metropolis-Hasting-Algorithm-Infinite-Dimension}
\end{algorithm}

To preserve the desired measure, we can choose the choice function
$a$ and proposal kernel $Q$ according to the following proposition.
\begin{prop}
\label{assu:mcmc-reversible-coondition}The Markov chain sampled in
Alg. \ref{alg:Metropolis-Hasting-Algorithm-Infinite-Dimension} is
reversible to the measure $\mu\sim\mu_{0}\exp\left(-\Phi\right)$
if the following assumption holds: the negative log potential $\Phi:X\to\mathbb{R}$
is upper-bounded on any bounded set of $X$, the choice function $a$$\left(u,v\right)=\min\left\{ 1,\exp\left(\Phi\left(u\right)-\Phi\left(v\right)\right)\right\} $
and the proposal kernel satisfies the reversible condition
\[
\mu_{0}\left(du\right)Q\left(u,dv\right)=\mu_{0}\left(dv\right)Q\left(v,du\right).
\]
\end{prop}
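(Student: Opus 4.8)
The plan is to verify the detailed balance (reversibility) condition directly, i.e. to show that the measure $\Pi\left(du,dv\right)\triangleq\mu\left(du\right)P\left(u,dv\right)$ on $X\times X$ is symmetric under the exchange $u\leftrightarrow v$, where $P$ is the one-step transition kernel of Alg.~\ref{alg:Metropolis-Hasting-Algorithm-Infinite-Dimension}. First I would write $P$ explicitly by separating the accept and reject events,
\[
P\left(u,dv\right)=a\left(u,v\right)Q\left(u,dv\right)+\left(1-\int_{X}a\left(u,w\right)Q\left(u,dw\right)\right)\delta_{u}\left(dv\right),
\]
the first term being ``propose $v$ and accept'' and the Dirac term collecting all rejections. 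The upper-boundedness of $\Phi$ on bounded sets enters only to guarantee that $\mu\propto e^{-\Phi}\mu_{0}$ is a genuine probability measure (in particular $Z=\int_{X}e^{-\Phi}d\mu_{0}\in\left(0,\infty\right)$, using the set $S$ with $\mu_{0}\left(S\right)>0$) and that $a\left(u,v\right)\in\left[0,1\right]$ is everywhere well-defined.

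The crux is a pointwise symmetry of the accept weight. Since $a\left(u,v\right)=\min\left\{1,\exp\left(\Phi\left(u\right)-\Phi\left(v\right)\right)\right\}$, a one-line case split on the sign of $\Phi\left(u\right)-\Phi\left(v\right)$ yields
\[
e^{-\Phi\left(u\right)}a\left(u,v\right)=e^{-\Phi\left(v\right)}a\left(v,u\right).
\]
I would then test against an arbitrary bounded measurable $h:X\times X\to\mathbb{R}$ and treat the two parts of $P$ separately. The rejection part contributes $\int_{X}h\left(u,u\right)\left(1-\int_{X}a\left(u,w\right)Q\left(u,dw\right)\right)\mu\left(du\right)$, which is supported on the diagonal and hence invariant under $u\leftrightarrow v$. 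For the acceptance part, writing $\mu\left(du\right)=Z^{-1}e^{-\Phi\left(u\right)}\mu_{0}\left(du\right)$ and applying in turn the symmetry identity above, then the hypothesis $\mu_{0}\left(du\right)Q\left(u,dv\right)=\mu_{0}\left(dv\right)Q\left(v,du\right)$, and finally a relabelling of the dummy integration variables, I would obtain
\[
\int_{X}\int_{X}h\left(u,v\right)\mu\left(du\right)a\left(u,v\right)Q\left(u,dv\right)=\int_{X}\int_{X}h\left(v,u\right)\mu\left(du\right)a\left(u,v\right)Q\left(u,dv\right).
\]
Adding the two symmetric parts establishes $\int h\,d\Pi=\int h\circ\sigma\,d\Pi$ (with $\sigma$ the coordinate swap), i.e. reversibility.

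I do not expect a genuine obstacle, as this is the classical Metropolis--Hastings computation; the only care required is to argue entirely in the language of measures on $X\times X$ rather than with transition densities, since in the infinite-dimensional setting the proposal $Q\left(u,dv\right)$ typically has no density against a fixed reference measure. In particular the hypothesis on $Q$ must be used as the stated identity of measures, and the factor $Z^{-1}e^{-\Phi}$ relating $\mu$ to $\mu_{0}$ is exactly what transports the $\mu_{0}$-reversibility of $Q$ into the $\mu$-reversibility of $P$.
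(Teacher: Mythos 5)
Your proof is correct and is exactly the classical detailed-balance computation that the paper itself does not spell out but instead delegates to \cite{Dashti2017} (Page 53): the same decomposition of the transition kernel into an acceptance part and a diagonal rejection part, the same pointwise identity $e^{-\Phi\left(u\right)}a\left(u,v\right)=e^{-\Phi\left(v\right)}a\left(v,u\right)$, and the same use of the $\mu_{0}$-reversibility of $Q$ as an identity of measures on $X\times X$ appear in the cited proof. One cosmetic quibble: the upper-boundedness of $\Phi$ on bounded sets yields $Z>0$ (via a bounded set of positive $\mu_{0}$-measure) but does not by itself give $Z<\infty$, which must rather be read as part of the standing assumption that $\mu\propto e^{-\Phi}\mu_{0}$ is a genuine probability measure; this does not affect your reversibility argument.
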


\begin{note*}
The proof is on Page 53 in \cite{Dashti2017}.
\end{note*}

\end{document}